%18 March 2016
\documentclass[12pt]{article}
\usepackage{amsmath,amssymb,amsthm,latexsym,amsbsy,amsfonts,mathrsfs}
\usepackage{color,verbatim}
\usepackage{pstricks}
\usepackage{tabularx}
\usepackage{rotating}
\usepackage{graphicx,epsfig}
\usepackage{enumitem}
\usepackage[normalem]{ulem}
\usepackage{xcolor}
%\usepackage{mathrsfs}
% \usepackage[colorlinks]{hyperref}
%\usepackage{showkeys}
%    \usepackage[notcite, notref]{showkeys}
%\textwidth=6.2in \textheight=9.2in
%\topmargin -0.7in
%\evensidemargin=0.5cm
%\baselineskip 24pt
% \newcommand{\rd}[1]{{\color{red}{#1}}}
% \newcommand{\rdnew}[1]{{\color{red}{#1}}}
% \newcommand{\bl}[1]{{\color{blue}{#1}}}
% \newcommand{\gr}[1]{{\color{green}{#1}}}
%%%% Number spaces %%%%%%%%%%%%%%%%%%%%%%%%%%%%%%%%%%%%%%%%%%%%%%%%%
\newcommand{\C}{{\mathbb C}}
\newcommand{\R}{{\mathbb R}}

\newcommand{\N}{{\mathbb N}}
\newcommand{\mP}{\mathbb P}

\newcommand{\mB}{\mathbb B}
\newcommand{\mE}{\mathbb E}
\newcommand{\mF}{\mathbb F}
\newcommand{\mH}{\mathbb H}
\newcommand{\mL}{\mathbb L}

\newcommand{\mW}{\mathbb W}

%%%% Norms and inner products %%%%%%%%%%%%%%%%%%%%%%%%%%%%%%%%%%%%%%
\newcommand{\inpro}[2]{\left\langle{#1},{#2}\right\rangle}

%%%% Greek characters %%%%%%%%%%%%%%%%%%%%%%%%%%%%%%%%%%%%%%%%%%%%%%

%%%% Bold letters %%%%%%%%%%%%%%%%%%%%%%%%%%%%%%%%%%%%%%%%%%%%%%%%%%

%%%% Bold symbols %%%%%%%%%%%%%%%%%%%%%%%%%%%%%%%%%%%%%%%%%%%%%%%%%%

\newcommand{\vecB}{\boldsymbol{B}}

\newcommand{\vecH}{\boldsymbol{H}}

\newcommand{\vecM}{\boldsymbol{M}}

\newcommand{\vecZ}{\boldsymbol{Z}}

\newcommand{\veca}{\boldsymbol{a}}
\newcommand{\vecb}{\boldsymbol{b}}
\newcommand{\vecc}{\boldsymbol{c}}

\newcommand{\vece}{\boldsymbol{e}}

\newcommand{\vecg}{\boldsymbol{g}}

\newcommand{\vech}{\boldsymbol{h}}
\newcommand{\veck}{\boldsymbol{k}}

\newcommand{\vecn}{\boldsymbol{n}}

\newcommand{\vecu}{\boldsymbol{u}}
\newcommand{\vecv}{\boldsymbol{v}}
\newcommand{\vecw}{\boldsymbol{w}}
\newcommand{\vecx}{\boldsymbol{x}}

\newcommand{\vecphi}{\boldsymbol{\phi}}
\newcommand{\vecpsi}{\boldsymbol{\psi}}

%%%% Operators %%%%%%%%%%%%%%%%%%%%%%%%%%%%%%%%%%%%%%%%%%%%%%%%%%

%\newcommand{\dom}{\text{Dom}}
%\newcommand{\ran}{\text{Range}}
%\newcommand{\cosec}{\text{cosec \/}}
%\newcommand{\area}{{\rm area }}
%\newcommand{\diag}{{\rm diag }}
%\newcommand{\diam}{{\rm diam }}
%\newcommand{\dist}{{\rm dist }}
%\newcommand{\spann}{{\rm span }}
%\newcommand{\supp}{{\rm supp \/}}
%\newcommand{\dive}{{\rm div \/}}
%\newcommand{\grad}{{\rm grad \/}}
%\newcommand{\curl}{{\rm curl \/}}
%\newcommand{\bcurl}{{\bf curl \/}}
%%%% mathcalligraphic letters %%%%%%%%%%%%%%%%%%%%%%%%%%%%%%%%%%%%%%%%%%

\newcommand{\cB}{{\mathcal B}}

\newcommand{\cF}{{\mathcal F}}

\newcommand{\cL}{{\mathcal L}}

\newcommand{\cT}{{\mathcal T}}

\newcommand{\cX}{{\mathcal X}}

\newcommand{\cZ}{{\mathcal Z}}

%%%% Script letters %%%%%%%%%%%%%%%%%%%%%%%%%%%%%%%%%%%%%%%%%%%%%%%%

%%%% Special spaces %%%%%%%%%%%%%%%%%%%%%%%%%%%%%%%%%%%%%%%%%%%%%%%%

%\newcommand{\Si}[1]{{\cal S}_{{#1}}}

%%%% Signs %%%%%%%%%%%%%%%%%%%%%%%%%%%%%%%%%%%%%%%%%%%%%%%%%%%%%%%%%
%\newcommand{\qed}{\hfill$\Box$} This is now in amsthm.sty
\newcommand{\goto}{\rightarrow}
\newcommand{\gotoo}{\longrightarrow}
\newcommand{\mapstoo}{\longmapsto}
\newcommand{\p}{\partial}

\newcommand{\td}{\tilde}

%\renewcommand{\subset}{\subseteq}

%%%% Others %%%%%%%%%%%%%%%%%%%%%%%%%%%%%%%%%%%%%%%%%%%%%%%%%%%%%%%%

\newcommand{\ds}{\, ds}
\newcommand{\dt}{\, dt}

\newcommand{\dvx}{\, d\vecx}

\newcommand{\nn}{\nonumber}

\setlength{\topmargin}{0in}
\setlength{\oddsidemargin}{0.1in}
\setlength{\evensidemargin}{0.1in}
\setlength{\textheight}{8.5in}
\setlength{\textwidth}{6.0in}

\numberwithin{equation}{section}
\newtheorem{theorem}{Theorem}[section]
\newtheorem{lemma}[theorem]{Lemma}
\newtheorem{corollary}[theorem]{Corollary}
\newtheorem{proposition}[theorem]{Proposition}

\newtheorem{definition}[theorem]{Definition}

%%%%%%%%%%%%%%%%%%%%%%%%%%%%%%%%%%%%%%%%%%%%%%%%%%%%%%%%%%%%%%%%%%%%%%%%%%%%%%

%%%%%%%%%%%%%%%%%%%%%%%%%%%%%%%%%%%%%%%%%%%%%%%%%%%%%%%%%%%%%%%%%%%%%%%%%%%%%%

\title{Existence of a unique solution and invariant measures for the stochastic Landau--Lifshitz--Bloch equation
%  \thanks{This work was supported by the 
%  Australian Research 
%  Council grant DP140101193.}
 }
 \author{Zdzislaw Brze{\'z}niak, Beniamin Goldys and Kim Ngan Le}

% \author{Kim Ngan Le
%         \thanks{School of Mathematics and Statistics,
%         The University of New South Wales,
%          Sydney 2052, Australia
%          Email:
%                  {\tt n.le-kim@unsw.edu.au}
%                 }     
%        }
% 
% \subjclass[2000]{Primary 35Q40, 35K55, 35R60, 60H15, 65L60,
% 65L20, 65C30; Secondary 82D45}
% \keywords{partial differential equation,
% Landau--Lifshitz--Bloch equation, ferromagnetism}
%\date{\today}

%\newtheorem{remark}{Remark}[section]

%\newtheorem{lemma}{Lemma}[section]
%\newcommand{\bef}{\begin{flushright}}
%\newcommand{\eef}{\end{flushright}}
%\newcommand{\interval}[1]{\mathinner{#1}}
%\newcommand{\eval}[2][\right]{\relax
%\ifx#1\right\relax \left.\fi#2#1\rvert}
%\newcommand{\envert}[1]{\left\lvert#1\right\rvert}
%\let\abs=\envert
%\newcommand{\enVert}[1]{\left\lVert#1\right\rVert}
%\let\norm=\enVert
%%%%%%%%%%%%%%%%%%%%%%%%%%%%%%%%%%%%%%%%%%%%%%%%%%%%%%%%%%%%%%%%%%%%%

% \newcommand{\R}{\mathbb{R}}
\newcommand{\iprod}[1]{\langle#1\rangle}

%\newcommand{\gtilde}{\tilde g}

% \newcommand{\vecu}{\boldsymbol{u}}
% \newcommand{\vecv}{\boldsymbol{v}}
% \newcommand{\vecg}{\boldsymbol{g}}

% \newcommand{\vecU}{\boldsymbol{U}}
% \newcommand{\vecG}{\boldsymbol{G}}

%%%%%%%%%%%%%%%%%%%%%%%%%%%%%%%%%%%%%%%%%%%%%%%%%%%%%%%%%%%%%%%%%%%%%
\begin{document}

\maketitle
\tableofcontents
\pagenumbering{arabic}
\begin{abstract}\noindent
The Landau--Lifshitz--Bloch equation perturbed by a space-dependent noise was proposed in \cite{Garanin1991} as a model for evolution of spins in ferromagnatic materials at the full range of temperatures, including the temperatures higher than the Curie temperature. In the case of a ferromagnet filling a bounded domain $D\subset\R^d$, $d=1,2,3$, we show the existence of strong (in the sense of PDEs) martingale solutions. Furthermore, in cases $d=1,2$ we prove uniqueness of pathwise solutions and the existence of invariant measures\footnote{
The results of this paper have been presented at \textit{The conference on Stochastic Analysis and its Applications} B\'edlewo,  29th
May--3rd June 2017 
and at the \textit{AIMS Conference on  Dynamical Systems and Differential Equations}
Taipei, July 2018}
. % {\bf Key words}: Landau--Lifshitz--Bloch, quasilinear parabolic equation, ferromagnetism

% {\bf AMS suject classifications}: 82D40, 35K59, 35R15
%82D40: Magnetic materials
%35K59 : quasilinear parabolic equations
%35R15 : Partial differential equations on infinite-dimensional 
\end{abstract}

%%%%%%%%%%%%%%%%%%%%%%%%%%%%%%%%%%%%%%%%%%%%%%%%%%%%%%%%%%%%%%%%%%%%%
\section{Introduction}
The aim of this paper is to initiate the analysis of stochastic Landau-Lifschitz-Bloch equation \eqref{eq: sLLB2}. For the reader's convenience we recall here some background material introduced in \cite{Le2016}. 
\par
A well-known model of ferromagnetic material leads to 
the Landau--Lifshitz--Gilbert  equation (LLGE) for the evolution of magnetic moment, which is valid only 
for temperatures close to the Curie temperature $T_{\text{c}}$~\cite{Gil55,LL35}. 
Several recent technological applications such as heat-assisted magnetic recording~\cite{Chantrelletal2015}, 
thermally assisted magnetic random access memories~\cite{Prejbeanu2007} 
or spincaloritronics have shown 
the need to generalise this theory to higher temperatures.
For high temperatures, a thermodynamically consistent 
approach was introduced by Garanin~\cite{Garanin1991,Garanin97} who derived 
the Landau--Lifshitz--Bloch equation (LLBE) for ferromagnets. 
The LLBE  essentially interpolates between 
the LLGE  at low temperatures and 
the Ginzburg-Landau theory of phase transitions. It is valid not only 
below but also above the Curie temperature. 
Let  $\vecu(t,\vecx)\in\R^3$ be the average spin polarisation
%where $\vecu=\vecm/m_s^0$, $\vecm$ is magnetization and 
%$m_s^0$ is the saturation magnetization value at temperature $T=0$, 
for $t>0$ and $\vecx\in D\subset \R^d$, $d=1,2,3$. The LLBE takes the form
\begin{equation}\label{eq: LLB}
\frac{\partial \vecu}{\partial t} 
= 
\gamma \vecu\times \vecH_{\text{eff}} 
+
L_1\frac{1}{|\vecu|^2}(\vecu\cdot\vecH_{\text{eff}})\vecu
-
L_2\frac{1}{|\vecu|^2}\vecu\times(\vecu\times\vecH_{\text{eff}})\,,
\end{equation}
where the effective field $\vecH_{\text{eff}}$ is given by \eqref{eq_eh} below. 
Here, $|\cdot|$ is the Euclidean norm in $\R^3$, $\gamma>0$ is the gyromagnetic ratio, and $L_1$ 
and $L_2$ are the longitudial  and transverse  
damping parameters, respectively.
\par\bigskip
Nevertheless, the deterministic LLBE is insufficient to capture the 
dispersion of individual trajectories at high temperatures. For example, 
when the magnetization is quenched it should describe the loss of magnetization 
correlations in different sites of the sample. In the laser-induced dynamics, 
this is responsible for the slowing down of the magnetization recovery at high laser fluency 
as the system temperature decreases~\cite{Evans12}. 
Therefore, under these circumstances and according to Brown~\cite{Brown1979,  Brown63}, 
stochastic forms of the LLBE are discussed in~\cite{ Evans12,Garanin2004} where 
the LLBE is modified in order to 
incorporate random fluctuations into the dynamics of the magnetisation and 
to describe noise-induced transitions between equilibrium states of the ferromagnet.
\par\medskip
In this paper,  we consider the stochastic LLBE, introduced in~\cite{Garanin2004}, perturbing the effective field 
$\vecH_{\text{eff}}$ in~\eqref{eq: LLB} by a Gaussian noise.
% , that is, 
% to replace $\vecH_{\text{eff}}$ in~\eqref{eq: LLB} by $\vecH_{\text{eff}}+\veczeta$. 
% Here, $\veczeta$ is 
% the white noises with respect to time variable and in general it can be colored with 
% respect to the space variables. 
Furthermore, we focus on 
a case  
%introduced in~\cite{Atxitia2007,Evans12}, 
in which the temperature 
$T$ is raised higher than $T_{\text{c}}$, and  as a consequence the longitudial $L_1$ and transverse $L_2$ damping parameters 
are equal. The effective field $\vecH_{\text{eff}}$ is given by
\begin{equation}\label{eq_eh}
 \vecH_{\text{eff}} = \Delta\vecu - 
 \frac{1}{\chi_{||}}
 \bigg(1+\frac{3}{5}\frac{T}{T-T_c}|\vecu|^2\bigg)\vecu, 
\end{equation}
where $\chi_{||}$ is the longitudinal susceptibility. 
Using the vector product identity 
$\veca\times(\vecb\times\vecc) = \vecb(\veca\cdot\vecc)-\vecc(\veca\cdot\vecb)$ where $(\cdot,\cdot)$ 
is the scalar product in $\R^3$, we obtain 
\[
\vecu\times(\vecu\times\vecH_{\text{eff}})
=
(\vecu\cdot\vecH_{\text{eff}})\vecu
-
|\vecu|^2\vecH_{\text{eff}},
\]
and from property $L_1=L_2=:\kappa_1$, 
the stochastic LLBE takes the form
% \begin{equation}\label{eq: sLLB1}
% \frac{\partial \vecu}{\partial t} 
% = 
% \kappa_1\Delta\vecu
% +
% \gamma\vecu\times  \Delta\vecu
% -
% \kappa_2(1+\mu|\vecu|^2)\vecu
% +
% \kappa_1\veczeta
% +
% \gamma\vecu\times\veczeta,
% \end{equation}
\begin{equation}\label{eq: sLLB2}
d\vecu 
= 
\bigl(
\kappa_1\Delta\vecu
+
\gamma\vecu\times  \Delta\vecu
-
\kappa_2(1+\mu|\vecu|^2)\vecu
\bigr)\dt
+
\sum_{k=1}^\infty
(\gamma\vecu\times\vech_k+\kappa_1\vech_k)\circ dW_k(t),
\end{equation}
with $\kappa_2 := \frac{\kappa_1}{\chi_{||}}$ and $\mu := \frac{3T}{5(T-T_c)}$.
Here, we assume that 
\begin{equation}\label{eq: condh}
% \{\vech_k\}_{k=1,\cdots}\in\mW^{1,\infty}(D) 
\sum_{k=1}^\infty \|\vech_k\|_{\mW^{1,\infty}(D)}^2 \leq h <\infty,
\end{equation}
and $\{W_k:k\geq 1\}$ is a family of independent real-valued Wiener processes.
% where $\kappa_2 := \frac{\kappa_1}{\chi_{||}}$ and $\mu := \tfrac35\frac{T}{T-T_c}$. 
Finally, the stochastic LLBE being studied in this paper is equation~\eqref{eq: sLLB2} 
with   real positive 
coefficients $\kappa_1,\kappa_2,\gamma,\mu$, initial data 
$\vecu(0,\vecx)=\vecu_0(\vecx)$ and subject to 
homogeneous Neumann boundary conditions. 
\par
We emphasise that introducing two kinds of noise, multiplicative and additive, seems necessary to capture important features of the physical system. Namely, it is argued in \cite{Evans12} that only then the model may lead to a Boltzmann distribution valid for the full range of temperatures. 
%To the best of our knowledge, the analysis of the stochastic LLBE~\eqref{eq: sLLB2}, is an open problem at present. 
\par\medskip
Despite its importance, very little is known about solutions to the deterministic and stochastic LLBE. A pioneering work on the existence of weak solutions to the  deterministic LLBE ~\eqref{eq: LLB} in a bounded
domain is carried out in~\cite{Le2016}. In this paper a Faedo--Galerkin approximation was introduced and 
the method of compactness was used to prove 
the existence of a weak solution for the LLBE and its regularity properties. In this work we built on the theory developed in ~\cite{Le2016} and initiated the theory of stochastic LLBE. While preparing its final version we learnt about the paper \cite{Jiang2019}. In their work the authors, starting from the formulation in ~\cite{Le2016}, prove the existence of weak (in PDE sense) martingale solutions to equation ~\eqref{eq: sLLB2}. In our work we show that martingale solutions are strong in PDE sense for $d=1,2,3$ and prove pathwise uniqeness in dimensions $d=1,2$ and this fact by the Yamada-Watanabe theorem implies uniqueness of martingale solutions. Finally, we prove the existence of an invariant measure which is an important step towards thermodynamic justification of the stochastic LLBE. The results of this paper have been presented at a number of international meetings (see footnote on p. 1). 
\begin{comment}\par\medskip
We will use the classical Galerkin approximations to prove the existence of a strong, in the sense of partial differential equations, solutions 
to equation~\eqref{eq: sLLB2} when $D\subset \R^d$ with $d=1,2,3$. 
Furthermore, in cases $D\subset \R$ or $D\subset \R^2$, we prove the existence of a unique strong solution,
in the sense of the theory of 
stochastic differential equations, by using the Yamada--Watanabe theorem: the existence of martingale solutions plus pathwise uniqueness 
implies the existence of the unique strong solution. The existence of an invariant measure is also obtained for the problem 
on 1 or 2-dimensional domains.
\end{comment}
\par\bigskip
The paper is organized as follows. Section~\ref{sec: nota} contains  Theorem~\ref{theo: main} and Theorem~\ref{theo: pathwise} 
 on the existence and uniqueness strong solution of~\eqref{eq: sLLB2} 
as well as its regularity properties. In Section~\ref{sec: FG} we introduce the Faedo--Galerkin 
approximations and prove for their solutions some uniform bounds in various norms. 
Sections~\ref{sec: tight} and~\ref{sec: exist} are devoted to the proof of Theorem~\ref{theo: main}. 
The existence of an invariant measure stated in Theorem~\ref{theo: invariantmeasure} is proved in Section~\ref{sec: invariant}. 
Finally, in the Appendix we collect, 
for the reader's convenience, some facts scattered in the literature that are used 
in the course of the proof.
%%%%%%%%%%%%%%%%%%%%%%%%%%%%%%%%%%%%
\section{Notation and the formulation of the main results}\label{sec: nota}
Let $D\subset\mathbb R^d$, $d=1,2,3$, be an open  bounded domain with uniformly $C^2$ boundary. The function space $\mH^1\!:=\mH^1(D,\R^3)$  is defined as follows:
\begin{align*}
\mH^1(D,\R^3)
&=
\left\{ \vecu\in\mL^2(D,\R^3) : \frac{\p\vecu}{\p
x_i}\in \mL^2(D,\R^3)\quad\text{for } i=1,2,3.
\right\}.
\end{align*}
Here, $\mL^p\!:=\mL^p(D,\R^3)$ with $p\ge 1$ is the usual space of $p^\text{th}$-power 
Lebesgue
integrable functions defined on $D$ and taking values in $\R^3$. 
%Let $C_1$ be a positive constant such that $\|u_0\|_{\mH^1}<C_1$. 
Throughout this paper, we denote a scalar product in a Hilbert space $H$ by
$\inpro{\cdot}{\cdot}_H$ and its associated norm by $\|\cdot\|_H$. The duality between 
a space $X$ and its dual $X^*$ will be denoted by $_{X}\!\iprod{\cdot,\cdot}_{X^*}$. 
\par\medskip\noindent
Let $X_w$ denote the Hilbert space $X$ endowed with the  weak topology and let 
\begin{align*}
 C([0,T];X_w):= &\text{ the space of weakly continuous functions } u:=[0,T]\goto X\\
 &\text{ endowed with the weakest topology such that for all } h\in X
 \text{ the mapping }\\
 &\quad C([0,T];X_w)\gotoo C([0,T];\R)\\
 &\quad \quad \quad \quad \quad u \mapstoo \inpro{u(\cdot)}{h}_X \quad\text{is continuous.}
\end{align*}
In particular, $u_n\goto u$ in $C([0,T];X_w)$ iff for all $h\in X$: 
\[
 \lim_{n\goto\infty}\sup_{t\in[0,T]}\bigl|\inpro{u_n(t)-u(t)}{h}_X\bigr| = 0.
\]
For a ball $\mB^1(R):=\{\vecx\in\mH^1:\|\vecx\|_{\mH^1}\leq R\}$ we denote by $\mB^1_w(R)$ the ball 
$\mB^1(R)$ endowed with the weak topology. It is well known that $\mB^1_w(R)$ is metrizable~\cite{Brezis1983}. Let us consider the following subspace 
of $ C([0,T];\mH^1_w)$
\[
 C([0,T];\mB^1_w(R)) = \{ \vecu\in C([0,T];\mH^1_w) : \sup_{t\in[0,T]}\|\vecu\|_{\mH^1}\leq R \}.
\]
The space $\bigl(C([0,T];\mB^1_w),\rho\bigr)$ is a   complete metric space with 
\[
 \rho(\vecu,\vecv) = \sup_{t\in[0,T]}q(\vecu(t),\vecv(t)),
\]
where $q$ is the metric compatible with the weak topology on $\mB^1$.
\begin{definition}\label{def: weakso}
Let $d=1,2,3$. Given $\vecu_0\in\mH^1$ and $T>0$, a weak martingale solution $(\Omega,\cF,\mF,\mP,W,\vecu)$
to~\eqref{eq: sLLB2}, consists of 
\begin{enumerate}
\renewcommand{\labelenumi}{(\alph{enumi})}
\item
a filtered probability space
$(\Omega,\cF,\mF,\mP)$ with the
filtration $\mF=\left(\mathcal F_t\right)$ satisfying the usual conditions,
\item
a family of independent real-valued Wiener processes 
$W=(W_j)_{j=1}^\infty $, adapted to the filtration $\mF$, 
\item 
a progressively measurable
process $\vecu : [0,T]\times \Omega \goto \mH^1\cap \mL^4$
such that $\mP$-a.s. $\vecu\in\mL^\infty\left(0,T;\mH^1\right)$ and 
for every  $t\in[0,T]$ and $\vecphi\in \C_0^{\infty}(D)$, $\mP$-a.s.:
\end{enumerate}
\begin{align}\label{eq: weakLLB}
\iprod{\vecu(t),\vecphi}_{\mL^2} 
=
&\iprod{\vecu_0,\vecphi}_{\mL^2}
-\kappa_1\int_0^t \iprod{\nabla\vecu(s),\nabla\vecphi}_{\mL^2}\ds 
-\gamma\int_0^t \iprod{\vecu(s)\times\nabla\vecu(s),\nabla\vecphi}_{\mL^2}\ds\nn \\
&-\kappa_2\int_0^t \iprod{(1+\mu|\vecu|^2(s))\vecu(s),\vecphi}_{\mL^2}\ds\nn\\
&+
\int_0^t
\sum_{k=1}^\infty
\iprod{
\gamma\vecu(s)\times\vech_k + \kappa_1\vech_k,\vecphi}_{\mL^2}\circ dW_k(s),
\end{align}
\end{definition}
%-----------------------------------
Now we  can formulate the main results of this paper.
%-----------------------------------
\begin{theorem}\label{theo: main}
Let $d=1,2,3$. 
Assume that $\|u_0\|_{\mH^1}<C_1$ for a certain $C_1>0$. Then
% \bl{For any $C_1>0$, for every $\vecu_0\in \mH^1$ satisfying $\|u_0\|_{\mH^1}<C_1$}, 
there exists a weak martingale solution $(\Omega,\cF,\mF,\mP,W,\vecu)$ of~\eqref{eq: sLLB2} such that 
\begin{enumerate}
\item for every $p\in[1,\infty)$, $\alpha\in(0,1/2)$
\begin{align}
\vecu\in L^{p}\bigl(\Omega;\bigl(L^{\infty}(0,T;\mH^1)\cap L^2(0,T;\mH^2)\bigr)\bigr),\nn\\
\mE\|\vecu\|^q_{W^{\gamma,p}(0,T;\mL^2)} + \mE\|\vecu\|^p_{L^{\infty}(0,T;\mH^1)\cap L^2(0,T;\mH^2)}  < c, \label{eq_est1}
\end{align}
and for every $q\in\left[1,\frac43\right)$ 
\begin{equation}\label{eq_est2}
\mE\left(\int_0^T\|\vecu(t)\times\Delta \vecu(t)\|_{\mL^{2}}^q \,dt\right)^p< c\,,
\end{equation}
where c is a positive constant depending on $p$, $C_1$ and $h$.
 \item the following equality holds in $\mL^2$:
\begin{align}\label{eq: mainu}
\vecu(t) 
&=
\vecu(0)
+
\kappa_1\int_0^t \Delta\vecu(s)\ds 
+
\gamma \int_0^t \vecu(s)\times\Delta\vecu(s)\ds\nn\\
&\quad-
\kappa_2 \int_0^t (1+\mu|\vecu|^2)\vecu(s)\ds  
+
\sum_{k=1}^\infty
\int_0^t
\bigl(\gamma\vecu(s)\times\vech_k + \kappa_1\vech_k\bigr)\circ dW_k(s),
\end{align}
\item 
for every $\bar\alpha\in\left[0,\frac14\right)$ and $\beta\in [0,\frac12]$
\begin{equation}\label{eq: mainu2}
\vecu(\cdot)\in C^{\bar\alpha}([0,T];\mL^{2})\cap C^\beta([0,T];\mL^{3/2}) \cap C([0,T];\mH^1_w)\bigr)\quad \mP-a.s. 
\end{equation}
\end{enumerate}
\end{theorem}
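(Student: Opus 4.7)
I would construct the solution via a Faedo--Galerkin scheme, extract a subsequence by a tightness/Skorokhod argument, and then pass to the limit in the weak formulation before upgrading to the strong form~\eqref{eq: mainu}. Let $\{\vecw_k\}$ be the orthonormal basis of $\mL^2$ consisting of eigenfunctions of the Neumann Laplacian (orthogonal also in $\mH^1$ and $\mH^2$), and let $\vecu_n(t)\in\spann\{\vecw_1,\ldots,\vecw_n\}$ solve the finite-dimensional Stratonovich SDE obtained by projecting the drift and diffusion of~\eqref{eq: sLLB2}; local well-posedness is standard, and global existence will follow a posteriori from the estimates below.

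The core of the argument is to establish bounds on $\vecu_n$ that are uniform in $n$. I would convert the Stratonovich equation to It\^o form and apply It\^o's formula to $\|\vecu_n\|_{\mL^2}^{2p}$ and $\|\nabla\vecu_n\|_{\mL^2}^{2p}$. The geometric cancellations $\iprod{\vecu\times\Delta\vecu,\vecu}_{\mL^2}=0$, $\iprod{\vecu\times\vech_k,\vecu}_{\mL^2}=0$ and $\iprod{\vecu\times\Delta\vecu,\Delta\vecu}_{\mL^2}=0$ are essential, and the sign-definite dissipation $\kappa_2\mu\|\vecu_n\|_{\mL^4}^4$ contributed by the cubic nonlinearity provides extra integrability that will absorb the bad terms from the Stratonovich--to--It\^o correction (which are themselves quadratic in $\vecu_n$). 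A BDG and Gronwall argument using~\eqref{eq: condh} then yields~\eqref{eq_est1}; the fractional Sobolev-in-time norm is controlled by plugging these bounds back into the SDE and using the stochastic integral estimates collected in the Appendix. For~\eqref{eq_est2}, I would interpolate via Gagliardo--Nirenberg, $\|\vecu_n\|_{\mL^\infty}\leq C\|\vecu_n\|_{\mH^1}^{2-s}\|\vecu_n\|_{\mH^2}^{s-1}$ for any $s>d/2$, and bound $\|\vecu_n\times\Delta\vecu_n\|_{\mL^2}\leq\|\vecu_n\|_{\mL^\infty}\|\Delta\vecu_n\|_{\mL^2}$; the exponent restriction $q<4/3$ corresponds to the worst case $d=3$ with $s\searrow 3/2$.

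The uniform bounds yield tightness of $\{\cL(\vecu_n)\}$ on the path space $\cX:=C([0,T];\mH^1_w)\cap L^2(0,T;\mH^1)\cap C([0,T];\mL^{3/2})$, via the Aubin--Lions lemma together with the compactness of Sobolev embeddings on the bounded $C^2$ domain $D$. Applying the Jakubowski--Skorokhod theorem, I would obtain on a new stochastic basis new processes $\tilde\vecu_n\to\tilde\vecu$ a.s.\ in $\cX$ and new Wiener processes $\tilde W^n_k$. The hardest step, I expect, is the passage to the limit in the nonlinear stochastic terms: the It\^o correction produced by the Stratonovich-to-It\^o conversion is quadratic in $\vecu_n$, and the deterministic term $\vecu_n\times\Delta\vecu_n$ couples a first-order quantity to a second derivative. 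Both are handled by combining strong convergence in $L^2(0,T;\mH^1)\cap L^p(0,T;\mL^2)$ (from the Skorokhod topology on $\cX$) with weak convergence in $L^2(0,T;\mH^2)$ (from~\eqref{eq_est1}), supplemented by a standard SPDE convergence lemma for stochastic integrals.

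Finally, once $\tilde\vecu$ is shown to satisfy the weak formulation~\eqref{eq: weakLLB}, the pathwise $\mH^2$-regularity ensures that every term on the right-hand side is an absolutely continuous $\mL^2$-valued function of $t$, promoting the identity to the equality~\eqref{eq: mainu} in $\mL^2$. For item~(3), the H\"older bound in $\mL^2$ is a consequence of the fractional Sobolev bound in~\eqref{eq_est1} and the Kolmogorov--Chentsov theorem; the sharper exponent in $\mL^{3/2}$ reflects the better integrability of the drift in that weaker space (via, e.g., $\|\vecu\times\Delta\vecu\|_{\mL^{3/2}}\leq\|\vecu\|_{\mL^6}\|\Delta\vecu\|_{\mL^2}$), while weak continuity in $\mH^1$ follows from the standard fact that $L^\infty(0,T;\mH^1)\cap C([0,T];\mL^2)\subset C([0,T];\mH^1_w)$.
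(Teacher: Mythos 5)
Your plan follows essentially the same route as the paper: Faedo--Galerkin with the Neumann eigenbasis, It\^o energy estimates exploiting the triple-product cancellations, the interpolation $\|\vecu_n\|_{\mL^\infty}\le c\|\vecu_n\|_{\mH^1}^{1/2}\|\vecu_n\|_{\mH^2}^{1/2}$ for the $q<4/3$ bound, tightness plus a Skorokhod-type argument, identification of the nonlinear limits by combining strong lower-order convergence with weak $L^2(0,T;\mH^2)$ convergence, and the Kolmogorov test together with the Strauss-type embedding $L^\infty(0,T;\mH^1)\cap C([0,T];\mL^2)\subset C([0,T];\mH^1_w)$ for item (3). One small correction: the Stratonovich-to-It\^o drift correction is not absorbed by the quartic dissipation $\kappa_2\mu\|\vecu_n\|_{\mL^4}^4$ --- it cancels exactly against the It\^o quadratic-variation term, since $\frac{\gamma}{2}\sum_k\inpro{\vecu_n}{G_{nk}(\vecu_n)\times\vech_k}_{\mL^2}=-\frac12\sum_k\|G_{nk}(\vecu_n)\|_{\mL^2}^2+\frac{\kappa_1}{2}\sum_k\inpro{\vech_k}{G_{nk}(\vecu_n)}_{\mL^2}$, and the leftover terms are only quadratic in $\vecu_n$ and are closed by Gronwall.
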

%-------------------------------
\begin{theorem}\label{theo: pathwise}(Pathwise uniqueness)
Let $D\subset \R$ or $D\subset \R^2$ and let $\vecu_0\in\mH^1$ be fixed. Assume that 
$(\Omega,\cF,\mF,\mP, W,\vecu_1)$ and $(\Omega,\cF,\mF,\mP, W,\vecu_2)$ are two weak martingale solutions to equation ~\eqref{eq: mainu}, such that for $i=1,2$
\begin{enumerate}
\renewcommand{\labelenumi}{(\alph{enumi})}
\item 
$\vecu_1(0)=\vecu_2(0)=\vecu_0$,
 \item the paths of $\vecu_i$ lie in $L^{\infty}(0,T;\mH^1)\cap L^2(0,T;\mH^2)$,
 \item each $\vecu_i$ satisfies equation~\eqref{eq: mainu}.
\end{enumerate}
Then, for $\mP$-a.e. $\omega\in\Omega$
\[
 \vecu_1(\cdot,\omega) = \vecu_2(\cdot,\omega).
\]
\end{theorem}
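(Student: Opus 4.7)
The plan is to set $\vecv = \vecu_1 - \vecu_2$ and apply an Itô/Stratonovich energy estimate to $\|\vecv(t)\|_{\mL^2}^2$. Because both $\vecu_i$ satisfy \eqref{eq: mainu}, the process $\vecv$ solves, in $\mL^2$,
\begin{equation*}
d\vecv = \bigl[\kappa_1\Delta\vecv + \gamma(\vecv\times\Delta\vecu_1 + \vecu_2\times\Delta\vecv) - \kappa_2\vecv - \kappa_2\mu(|\vecu_1|^2\vecu_1 - |\vecu_2|^2\vecu_2)\bigr]\dt + \sum_{k=1}^\infty \gamma(\vecv\times\vech_k)\circ dW_k(t),
\end{equation*}
where the additive part $\kappa_1\vech_k$ has cancelled. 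Crucially, the Stratonovich noise is linear and pointwise orthogonal to $\vecv$: since $\vecv\cdot(\vecv\times\vech_k)=0$, applying the Stratonovich chain rule (or converting to Itô and verifying the correction terms cancel, as they do because $(\vecv\times\vech_k)\times\vech_k = -|\vech_k|^2\vecv + (\vech_k\cdot\vecv)\vech_k$ combined with $|\vecv\times\vech_k|^2 = |\vecv|^2|\vech_k|^2 - (\vecv\cdot\vech_k)^2$) eliminates the stochastic terms from the evolution of $\|\vecv\|_{\mL^2}^2$.

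It then remains to estimate the drift. For the Laplacian, integration by parts with Neumann boundary conditions yields $\langle\vecv,\kappa_1\Delta\vecv\rangle_{\mL^2} = -\kappa_1\|\nabla\vecv\|_{\mL^2}^2$. The cross-product drift splits into two pieces. The term $\langle\vecv,\vecv\times\Delta\vecu_1\rangle_{\mL^2}$ vanishes identically because $\vecv\cdot(\vecv\times\Delta\vecu_1)=0$ pointwise. For $\langle\vecv,\vecu_2\times\Delta\vecv\rangle_{\mL^2}$, integration by parts gives $-\int_D \epsilon_{ijk}\,v_i\,\partial_\ell u_{2,j}\,\partial_\ell v_k\dx$, the antisymmetric piece $\epsilon_{ijk}\partial_\ell v_i\partial_\ell v_k u_{2,j}$ again vanishing by symmetry. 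Here enters the restriction $d\in\{1,2\}$: by Ladyzhenskaya's/Gagliardo--Nirenberg's inequality in dimension $\le 2$,
\begin{equation*}
\|\vecv\|_{\mL^4}\le C\|\vecv\|_{\mL^2}^{1/2}\|\vecv\|_{\mH^1}^{1/2},\qquad \|\nabla\vecu_2\|_{\mL^4}\le C\|\vecu_2\|_{\mH^1}^{1/2}\|\vecu_2\|_{\mH^2}^{1/2},
\end{equation*}
so that $|\langle\vecv,\vecu_2\times\Delta\vecv\rangle_{\mL^2}|\le C\|\vecu_2\|_{\mH^1}^{1/2}\|\vecu_2\|_{\mH^2}^{1/2}\|\vecv\|_{\mL^2}^{1/2}\|\vecv\|_{\mH^1}^{3/2}$; Young's inequality distributes this into $\tfrac{\kappa_1}{2}\|\nabla\vecv\|_{\mL^2}^2$ plus a coefficient involving $\|\vecu_2\|_{\mH^1}^2\|\vecu_2\|_{\mH^2}^2$ times $\|\vecv\|_{\mL^2}^2$. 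The cubic nonlinearity is estimated by $||\vecu_1|^2\vecu_1-|\vecu_2|^2\vecu_2|\le C(|\vecu_1|^2+|\vecu_2|^2)|\vecv|$, controlled in 1D by the $\mH^1\hookrightarrow L^\infty$ embedding and in 2D by $\|\vecv\|_{\mL^4}^2\le C\|\vecv\|_{\mL^2}\|\vecv\|_{\mH^1}$, again absorbing a fraction of $\|\nabla\vecv\|_{\mL^2}^2$.

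Combining these estimates produces the pathwise differential inequality
\begin{equation*}
\|\vecv(t)\|_{\mL^2}^2 \le \int_0^t \Phi(s)\,\|\vecv(s)\|_{\mL^2}^2\ds,\qquad \Phi(s)=C\bigl(1+\|\vecu_1(s)\|_{\mH^1}^4+\|\vecu_2(s)\|_{\mH^1}^4+\|\vecu_2(s)\|_{\mH^2}^2\bigr).
\end{equation*}
Hypothesis (b) together with Theorem~\ref{theo: main} ensures $\int_0^T\Phi(s)\ds<\infty$ $\mP$-a.s., so Gronwall's lemma forces $\vecv\equiv 0$. The main obstacle is the cross-product term $\vecu_2\times\Delta\vecv$: it is the one term that is genuinely quasilinear and fails to be handled by a plain $\mL^2$ contraction argument in dimension $d=3$, which is why the statement is restricted to $d\le 2$; making the integration-by-parts rigorous, together with a careful truncation/stopping-time argument (stopping when the random norms appearing in $\Phi$ exceed $N$, then sending $N\to\infty$), is the technical heart of the proof.
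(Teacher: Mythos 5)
Your proposal is correct and follows essentially the same route as the paper: an $\mL^2$-energy estimate for $\vecv=\vecu_1-\vecu_2$ in which the additive noise cancels, the Stratonovich/It\^o correction terms annihilate each other, and the quasilinear term $\inpro{\vecv}{\vecu_2\times\Delta\vecv}_{\mL^2}$ is handled by integration by parts, the interpolation $\|\cdot\|_{\mL^4}\le c\|\cdot\|_{\mL^2}^{1/2}\|\cdot\|_{\mH^1}^{1/2}$ (valid only for $d\le 2$), Young's inequality and Gronwall. The only cosmetic differences are that you treat $d=1$ and $d=2$ uniformly via $\mL^4$ where the paper uses the $\mL^\infty$-interpolation for $d=1$, and that your displayed $\Phi$ should retain the product $\|\vecu_2\|_{\mH^1}^2\|\vecu_2\|_{\mH^2}^2$ (integrable along paths in $L^\infty(0,T;\mH^1)\cap L^2(0,T;\mH^2)$, as in the paper's $\Phi$) rather than the sum $\|\vecu_2\|_{\mH^1}^4+\|\vecu_2\|_{\mH^2}^2$, which does not dominate it.
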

%-------------------------------
\begin{proof}
 Let $\vecv:=\vecu_1-\vecu_2$. Then $\vecv$ satisfies the following equation
\begin{align}\label{eq: uni1}
d\vecv 
=
\bigl[
\kappa_1\Delta\vecv
&+
\gamma(\vecv\times\Delta\vecu_1+\vecu_2\times\Delta\vecv
-
\kappa_2\bigl(\vecv+\mu(|\vecu_1|^2\vecv + (|\vecu_1|^2-|\vecu_2|^2)\vecu_2)\bigr)\nn\\
&+
\frac12\gamma\sum_{k=1}^\infty(\vecv\times\vech_k)\times\vech_k
\bigr]\dt
+
\gamma\sum_{k=1}^\infty\vecv\times\vech_k\,dW_k(t),
\end{align}
with $\vecv(\cdot,0)= \vecv_0=0$
By using It\^o Lemma and~\eqref{eq: uni1} we get 
\begin{align*}
\frac12 d\|\vecv\|^2
&=
\inpro{\vecv}{d\vecv}_{\mL^2}
+
\frac12\inpro{d\vecv}{d\vecv}_{\mL^2}\\
&=
\bigl[
-\kappa_1\|\nabla\vecv\|_{\mL^2}^2
+\gamma \inpro{\vecv}{\vecu_2\times\Delta\vecv}
-\kappa_2\|\vecv\|^2_{\mL^2}
-\kappa_2\mu\||\vecu_1|\vecv\|^2_{\mL^2}\\
&\quad-
\kappa_2\mu\inpro{\vecv}{(|\vecu_1|^2-|\vecu_2|^2)\vecu_2}_{\mL^2}
-
\frac12 \sum_{k=1}^\infty \|\vecv\times\vech_k\|^2_{\mL^2}
\bigr]\dt,
\end{align*}
hence
\begin{align}\label{eq: uni2}
\frac12\|\vecv(t)\|^2
+
\kappa_1\int_0^t\|\nabla\vecv(s)\|_{\mL^2}^2\ds
+
\kappa_2\|\vecv(t)\|^2_{\mL^2}
\leq 
&\frac12\|\vecv_0\|^2
+
\gamma \int_0^t\inpro{\vecv(s)}{\vecu_2(s)\times\Delta\vecv(s)}_{\mL^2}\ds\nn\\
&-
\kappa_2\mu\int_0^t\inpro{\vecv(s)}{(|\vecu_1(s)|^2-|\vecu_2(s)|^2)\vecu_2(s)}_{\mL^2}\ds.
\end{align}
We now estimate all terms in the right hand side of~\eqref{eq: uni2}. 

Let us start with the second term. By using the triangle inequality, there holds
\begin{align}\label{eq: uni3}
\bigl|\int_0^t&\inpro{\vecv(s)}{(|\vecu_1(s)|^2-|\vecu_2(s)|^2)\vecu_2(s)}_{\mL^2}\ds\bigr|
\leq 
\int_0^t\int_D
|\vecv|^2 |\vecu_2|(|\vecu_1|+|\vecu_2|)\dvx\ds\nn\\
&\quad\leq
\int_0^t
\|\vecu_2(s)\|_{\mL^{\infty}}(\|\vecu_1(s)\|_{\mL^{\infty}}+\|\vecu_2(s)\|_{\mL^{\infty}})
\|\vecv(s)\|_{\mL^2}^2
\ds
\end{align}

The first term in case $D\subset \R$ is estimated by noting the following interpolation inequality
\[
\|\vecv\|_{\mL^{\infty}} 
\leq 
c\|\vecv\|^{\frac12}_{\mL^2}\|\vecv\|^{\frac12}_{\mH^1},
\]
there hold
\begin{align}\label{eq: uni4}
\bigl|\int_0^t\inpro{\vecv(s)}{\vecu_2(s)\times\Delta\vecv(s)}_{\mL^2}\ds\bigr|
&= 
\bigl|\int_0^t\inpro{\vecv(s)}{\nabla\vecu_2(s)\times\nabla\vecv(s)}_{\mL^2}\ds\bigr|\nn\\
&\leq 
\int_0^t
\|\vecv(s)\|_{\mL^{\infty}} \|\nabla\vecv(s)\|_{\mL^2}
\|\nabla\vecu_2(s)\|_{\mL^2}
\ds\nn\\
&\leq 
c\int_0^t
\|\vecv\|^{\frac12}_{\mL^2}\|\vecv\|^{\frac12}_{\mH^1} \|\nabla\vecv\|_{\mL^2}
\|\nabla\vecu_2\|_{\mL^2}
\ds\nn\\
&\leq 
c\int_0^t
\|\vecv\|_{\mL^2}\|\nabla\vecv\|_{\mL^2}
\|\nabla\vecu_2\|_{\mL^2}
\ds\nn\\
&\quad+
c\int_0^t
\|\vecv\|_{\mL^2}^{1/2}\|\nabla\vecv\|_{\mL^2}^{3/2}
\|\nabla\vecu_2\|_{\mL^2}
\ds\nn\\
&\leq 
\frac{\kappa_1}{2\gamma}\int_0^t\|\nabla\vecv\|_{\mL^2}^2\ds\nn\\
&\quad+
\frac{\gamma c^2}{\kappa_1}
\int_0^t
\|\nabla\vecu_2\|_{\mL^2}^2
\bigl(
1+
\frac{4\gamma c^2}{\kappa_1}
\|\nabla\vecu_2\|_{\mL^2}^2
\bigr)
\|\vecv\|_{\mL^2}^2\ds.
\end{align}
It follows from~\eqref{eq: uni2}--\eqref{eq: uni4} that 
\[
\|\vecv(t)\|_{\mL^2}^2
\leq 
\frac12\|\vecv_0\|_{\mL^2}^2
+
c\int_0^t
\Phi(s)\|\vecv(s)\|_{\mL^2}^2\ds,
\]
where 
\[
\Phi(s)
:= 
\|\vecu_2(s)\|_{\mL^{\infty}}(\|\vecu_1(s)\|_{\mL^{\infty}}+\|\vecu_2(s)\|_{\mL^{\infty}})
+
\|\nabla\vecu_2(s)\|_{\mL^2}^2 + \|\nabla\vecu_2(s)\|_{\mL^2}^4.
\]
Using Gronwall inequality and noting $\int_0^t\Phi(s)\ds<\infty$, we deduce that 
\[
\|\vecv(t)\|_{\mL^2}^2
\leq 
e^{c\int_0^t\Phi(s)\ds}\|\vecv_0\|_{\mL^2}^2,
\]
it implies $\vecu_1(\cdot,\omega)=\vecu_2(\cdot,\omega)$ for $\mP$-a.e. $\omega\in\Omega$ as $\vecv_0=0$.
% By using It\^o Lemma and~\eqref{eq: uni1} we get 
% \begin{align*}
% \frac12 d\|\nabla\vecv\|^2
% &=
% -\inpro{\Delta\vecv}{d\vecv}_{\mL^2}
% +
% \frac12\inpro{\nabla d\vecv}{\nabla d\vecv}_{\mL^2}\\
% &=
% \bigl[
% -\kappa_1\|\nabla\vecv\|_{\mL^2}^2
% +\gamma \inpro{\vecv}{\vecu_2\times\Delta\vecv}
% -\kappa_2\|\vecv\|^2_{\mL^2}
% -\kappa_2\mu\||\vecu_1|\vecv\|^2_{\mL^2}\\
% &\quad-
% \kappa_2\mu\inpro{\vecv}{(|\vecu_1|^2-|\vecu_2|^2)\vecu_2}_{\mL^2}
% -
% \frac12 \sum_{k=1}^\infty \|\vecv\times\vech_k\|^2_{\mL^2}
% \bigr]\dt.
% \end{align*}

The first term in case $D\subset \R^2$ is estimated by noting that 
% $\mH^1\hookrightarrow\mL^4$ 
\[
\|\vecw\|_{\mL^4}\leq c \|\vecw\|_{\mL^2}^{\frac12}\|\vecw\|_{\mH^1}^{\frac12},
\quad\forall \vecw\in \mH^1(D)\text{ and } D\subset \R^2
\]
and using the Young inequality $ab\leq \frac{a^{4/3}}{4/3} +\frac{b^4}{4}$, there hold
\begin{align}\label{eq: uni5}
\bigl|\int_0^t\inpro{\vecv(s)}{\vecu_2(s)\times\Delta\vecv(s)}_{\mL^2}\ds\bigr|
&= 
\bigl|\int_0^t\inpro{\vecv(s)}{\nabla\vecu_2(s)\times\nabla\vecv(s)}_{\mL^2}\ds\bigr|\nn\\
&\leq 
\int_0^t
\|\vecv(s)\|_{\mL^4} \|\nabla\vecv(s)\|_{\mL^2}
\|\nabla\vecu_2(s)\|_{\mL^4}
\ds\nn\\
&\leq 
c\int_0^t
\|\vecv\|^{\frac12}_{\mL^2}\|\vecv\|^{\frac12}_{\mH^1} \|\nabla\vecv\|_{\mL^2}
\|\nabla\vecu_2\|_{\mL^4}
\ds\nn\\
&\leq 
c\int_0^t
\|\vecv\|_{\mL^2}^{1/2}\|\nabla\vecv\|_{\mL^2}^{3/2}
\|\nabla\vecu_2\|_{\mL^4}
\ds
+ 
c\int_0^t
\|\vecv\|_{\mL^2}\|\nabla\vecv\|_{\mL^2}
\|\nabla\vecu_2\|_{\mL^4}
\ds
\nn\\
&\leq 
\frac{3\kappa_1}{4\gamma}\int_0^t\|\nabla\vecv\|_{\mL^2}^2\ds
+
\frac{\gamma^3 c^2}{4\kappa_1^3}
\int_0^t
\|\nabla\vecu_2\|_{\mL^4}^4
\|\vecv\|_{\mL^2}^2\ds\nn\\
&\quad +
\frac{\kappa_1}{8\gamma}\int_0^t\|\nabla\vecv\|_{\mL^2}^2\ds
+
\frac{\gamma c^2}{2\kappa_1^2}
\int_0^t
\|\nabla\vecu_2\|_{\mL^4}^2
\|\vecv\|_{\mL^2}^2\ds\nn\\
&\leq 
\frac{5\kappa_1}{8\gamma}\int_0^t\|\nabla\vecv\|_{\mL^2}^2\ds\nn\\
&\quad+
c
\int_0^t
\big(\|\nabla\vecu_2\|_{\mL^2}^2\|\vecu_2\|_{\mH^2}^2
+\|\nabla\vecu_2\|_{\mL^2}\|\vecu_2\|_{\mH^2}
\big)
\|\vecv\|_{\mL^2}^2\ds.
\end{align}
It follows from~\eqref{eq: uni2},~\eqref{eq: uni3} and~\eqref{eq: uni5} that 
\[
\|\vecv(t)\|_{\mL^2}^2
\leq 
\frac12\|\vecv_0\|_{\mL^2}^2
+
c\int_0^t
\Phi(s)\|\vecv(s)\|_{\mL^2}^2\ds,
\]
where 
\[
\Phi(s)
:= 
\|\vecu_2(s)\|_{\mL^{\infty}}(\|\vecu_1(s)\|_{\mL^{\infty}}+\|\vecu_2(s)\|_{\mL^{\infty}})
+
\|\nabla\vecu_2\|_{\mL^2}^2\|\vecu_2\|_{\mH^2}^2 + \|\nabla\vecu_2\|_{\mL^2}\|\vecu_2\|_{\mH^2}.
\]
Using the Gronwall inequality and noting $\int_0^t\Phi(s)\ds<\infty$, we find that 
\[
\|\vecv(t)\|_{\mL^2}^2
\leq 
e^{c\int_0^t\Phi(s)\ds}\|\vecv_0\|_{\mL^2}^2,
\]
it implies $\vecu_1(\cdot,\omega)=\vecu_2(\cdot,\omega)$ for $\mP$-a.e. $\omega\in\Omega$ as $\vecv_0 = 0$.
\end{proof}
%---------------------------------
\begin{corollary}\label{co: strongsolution}
Let $D\subset \R$ or $D\subset \R^2$. Then for every $\vecu_0\in\mH^1$
\begin{enumerate}
 \item there exists a pathwise unique strong solution of equation~\eqref{eq: sLLB2};
 \item the martingale solution of~\eqref{eq: sLLB2} is unique in law.
\end{enumerate}
\end{corollary}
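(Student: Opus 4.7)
My plan is to reduce both assertions to the Yamada--Watanabe principle in an infinite-dimensional formulation (e.g.\ Kurtz, Ondrej\'at, or R\"ockner--Schmuland--Zhang). That principle says the following: once one has (i) existence of a weak martingale solution in a fixed regularity class and (ii) pathwise uniqueness within that same class, then there automatically exists a pathwise unique strong solution --- that is, a solution adapted to the augmented filtration generated by the prescribed sequence $W=(W_k)_{k\ge 1}$ --- and moreover any two weak martingale solutions have the same law on path space. The latter is precisely assertion~(2), and the former is assertion~(1).

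The two required inputs are already in hand. Theorem~\ref{theo: main} produces, for every $\vecu_0\in\mH^1$ and for $d=1,2,3$, a weak martingale solution $(\Omega,\cF,\mF,\mP,W,\vecu)$ to~\eqref{eq: sLLB2} whose paths lie $\mP$-a.s.\ in $L^{\infty}(0,T;\mH^1)\cap L^{2}(0,T;\mH^2)$. For $d=1,2$, Theorem~\ref{theo: pathwise} delivers pathwise uniqueness in exactly this regularity class. Since the regularity class of solutions produced by the existence theorem coincides with the one covered by the uniqueness theorem, there is no gap between the two, and the two items of the corollary drop out of Yamada--Watanabe simultaneously.

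The one bookkeeping step is to recast the Stratonovich equation~\eqref{eq: sLLB2} into an equivalent It\^o form: the summability condition~\eqref{eq: condh} ensures that the Stratonovich corrector $\tfrac12\sum_k(\vecu\times\vech_k)\times\vech_k + \tfrac12\kappa_1\sum_k(\vech_k\times\vech_k)$ (and its multiplicative part) converges and defines a locally Lipschitz lower order perturbation of the drift in~\eqref{eq: mainu}. After this reformulation, the drift and diffusion coefficients are progressively measurable, the driving process is a cylindrical Wiener process on a separable Hilbert space, and one may apply the abstract Yamada--Watanabe theorem on a suitable Polish path space (for instance $C([0,T];\mL^2)\cap L^2(0,T;\mH^1)$, on which the laws of $\vecu$ are concentrated thanks to~\eqref{eq_est1}--\eqref{eq: mainu2}).

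The main potential obstacle I anticipate is not any new estimate, but rather the careful selection of the Polish path space and verification of the measurability/regularity hypotheses required by the precise version of Yamada--Watanabe that one invokes; this is essentially routine once~\eqref{eq_est1}--\eqref{eq_est2} and Theorem~\ref{theo: pathwise} are in place.
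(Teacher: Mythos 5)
Your proposal is correct and takes essentially the same route as the paper: the authors likewise combine the existence of a martingale solution from Theorem~\ref{theo: main} with the pathwise uniqueness of Theorem~\ref{theo: pathwise} and conclude via the infinite-dimensional Yamada--Watanabe theorem (they cite Theorems 2.2 and 12.1 of Ondrej\'at's 2004 work). The only cosmetic remark is that part of the Stratonovich corrector you wrote vanishes since $\vech_k\times\vech_k=0$, which does not affect the argument.
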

\begin{proof}
Since by Theorem~\ref{theo: main} there exists a martingale solution and by Theorem~\ref{theo: pathwise} it is pathwise 
unique, the corrollary follows from Theorem 2.2 and 12.1 in~\cite{Ondrejat2004}.
\end{proof}

%------------------------------
%%%%%%%%%%%%%%%%%%%%%%%%%%%%%%%%%%%
\section{Faedo-Galerkin Approximation}\label{sec: FG}
Let $A=-\Delta$ be the negative Neumann Laplacian in $D$. 
Then ~\cite[Theorem 1, p. 335]{Evans1998}, there exists an orthonormal basis 
$\{\vece_i\}_{i=1}^{\infty}$ of $\mL^2$, consisting 
of eigenvectors of $A$, such that for all $i=1,2,\ldots$ 
\[
-\Delta\vece_i = \lambda_i\vece_i,\quad 
\frac{\partial\vece_i}{\partial\vecn} = 0 \text{ on }\partial D,\quad \vece_i\in\mL^{\infty}\,,
\]
where $\vecn$ is the outward normal on the boundary $\partial D$; 
and $\lambda_i>0$ for $i=1,2,$\dots are eigenvalues of $A$. For $\beta>0$ we define the Hilbert space $X^\beta=\mathrm{dom}\left(A^\beta\right)$ endowed with the norm 
\[\|\vecv\|_{X^\beta}=\left\|(I+A)^\beta\vecv\right\|_{\mL^2}\,.\]
The dual space will be denoted by $X^{-\beta}$. \\
Let $S_n:=\text{span}\{\vece_1,\cdots,\vece_n\}$ and 
$\Pi_n$ be the orthogonal projection from $\mL^2$ onto $S_n$, 
defined by: for $\vecv\in\mL^2$
\begin{equation}\label{eq: Pi_n}
\iprod{\Pi_n\vecv,\vecphi}_{\mL^2} 
= \iprod{\vecv,\vecphi}_{\mL^2},\quad\forall \vecphi\in S_n.
\end{equation}
We note that  
\begin{equation}\label{eq: adj2}
\iprod{\nabla\vecw,\nabla\Pi_n\vecv}_{\mL^2}
=
\iprod{\vecv,A\Pi_n\vecw}_{\mL^2}
=
\iprod{\nabla\vecv,\nabla\Pi_n\vecw}_{\mL^2}, 
\end{equation}
for $\vecv,\vecw\in\mH^1$, 
hence 
\begin{equation}\label{eq: boundPi_n}
\|\Pi_n\vecv\|_{\mL^2}
\leq
\|\vecv\|_{\mL^2}
\quad\forall\vecv\in \mL^2,
\end{equation}
and 
\begin{equation}\label{eq: boundPi_n2}
\|\nabla\Pi_n\vecv\|_{\mL^2}
\leq \|\nabla\vecv\|_{\mL^2}
\quad\forall\vecv\in \mH^1.
\end{equation}

We are now looking for approximate solution 
$\vecu_n(t)\in S_n:=\text{span}\{\vece_1,\cdots,\vece_n\}$ of equation~\eqref{eq: sLLB2} satisfying
% \begin{equation}\label{eq: GaLLB}
% \bigiprod{\frac{\partial \vecu_n}{\partial t},\vecphi }_{\mL^2}
% +
% \iprod{\nabla\vecu_n,\nabla\vecphi}_{\mL^2}
% +
% \iprod{\vecu_n\times  \nabla\vecu_n,\nabla\vecphi}_{\mL^2}
% +
% \bigiprod{(1+\vecu_n^2)\vecu_n,\vecphi}_{\mL^2} = 0,
% \quad\forall \vecphi\in S_n,\, t\in(0,T),
% \end{equation}
\begin{align}\label{eq: GaLLB}
d\vecu_n 
= 
&\bigl(
\kappa_1\Delta\vecu_n
+
\gamma\Pi_n(\vecu_n\times  \Delta\vecu_n)
-
\kappa_2\Pi_n\bigl((1+\mu|\vecu_n|^2)\vecu_n\bigr)
\bigr)\dt\nn\\
&+
\sum_{k=1}^n
\Pi_n\bigl(\gamma
\vecu_n\times\vech_k + \kappa_1\vech_k\bigr)\circ dW_k(t),
\end{align}
with $\vecu_n(0) = \vecu_{0n}=\Pi_n\vecu_0$. 
The existence of a local solution to~\eqref{eq: GaLLB} 
is a consequence of the following lemma.
%--------------------------
\begin{lemma}
For $n\in\N$, define the maps:
\begin{align*}
&F^1_n: S_n\ni \vecv \mapsto \Delta\vecv\in S_n,\\
&F^2_n: S_n\ni\vecv\mapsto \Pi_n(\vecv\times\Delta\vecv)\in S_n,\\
&F^3_n: S_n\ni\vecv\mapsto \Pi_n((1+\mu|\vecv|^2)\vecv)\in S_n,\\
&G_{nk} : S_n\ni\vecv\mapsto \Pi_n\bigl(
\gamma\vecu_n\times\vech_k+ \kappa_1\vech_k\bigr)
\end{align*}
Then $F^1_n$ and $G_{nk}$ are globally Lipschitz and $F^2_n$, $F^3_n$ are locally Lipschitz.
\end{lemma}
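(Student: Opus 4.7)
The driving idea is that $S_n$ is finite-dimensional, so all norms on $S_n$ are equivalent, with constants that depend (of course) on $n$. Concretely, since $\{\vece_1,\dots,\vece_n\}\subset \mL^\infty$, there exists a constant $c_n>0$ such that $\|\vecv\|_{\mL^\infty}\le c_n\|\vecv\|_{\mL^2}$ for every $\vecv\in S_n$, and the Laplacian satisfies $\|\Delta\vecv\|_{\mL^2}\le \lambda_n\|\vecv\|_{\mL^2}$ on $S_n$ because its eigenvalues, restricted to $S_n$, are at most $\lambda_n$. Combined with the $\mL^2$-contractivity of $\Pi_n$ recorded in \eqref{eq: boundPi_n}, every estimate then reduces to a pointwise algebraic manipulation followed by H\"older's inequality.

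The two globally Lipschitz cases are immediate. For $F^1_n$, linearity gives
\[
\|F^1_n(\vecv)-F^1_n(\vecw)\|_{\mL^2}=\|\Delta(\vecv-\vecw)\|_{\mL^2}\le \lambda_n\|\vecv-\vecw\|_{\mL^2}.
\]
For $G_{nk}$, the affine structure together with \eqref{eq: boundPi_n} yields
\[
\|G_{nk}(\vecv)-G_{nk}(\vecw)\|_{\mL^2}\le \gamma\|(\vecv-\vecw)\times\vech_k\|_{\mL^2}\le \gamma\|\vech_k\|_{\mL^\infty}\|\vecv-\vecw\|_{\mL^2},
\]
which is finite by assumption \eqref{eq: condh} since $\mW^{1,\infty}(D)\hookrightarrow \mL^\infty(D)$.

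For the locally Lipschitz bounds I would work pointwise. Writing
\[
\vecv\times\Delta\vecv-\vecw\times\Delta\vecw=(\vecv-\vecw)\times\Delta\vecv+\vecw\times\Delta(\vecv-\vecw),
\]
and using $\|\Pi_n\|_{\mL^2\to\mL^2}\le 1$, the H\"older estimate
\[
\|F^2_n(\vecv)-F^2_n(\vecw)\|_{\mL^2}\le \|\vecv-\vecw\|_{\mL^\infty}\|\Delta\vecv\|_{\mL^2}+\|\vecw\|_{\mL^\infty}\|\Delta(\vecv-\vecw)\|_{\mL^2}
\]
combined with the norm-equivalence step above produces a Lipschitz constant of order $c_n\lambda_n(\|\vecv\|_{\mL^2}+\|\vecw\|_{\mL^2})$. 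Similarly, the identity
\[
(1+\mu|\vecv|^2)\vecv-(1+\mu|\vecw|^2)\vecw=(1+\mu|\vecv|^2)(\vecv-\vecw)+\mu\bigl((\vecv-\vecw)\cdot(\vecv+\vecw)\bigr)\vecw
\]
yields the pointwise majorant $\bigl(1+\mu(|\vecv|^2+|\vecv||\vecw|+|\vecw|^2)\bigr)|\vecv-\vecw|$, and after applying $\Pi_n$ and $\|\cdot\|_{\mL^\infty}\le c_n\|\cdot\|_{\mL^2}$ I obtain a Lipschitz constant depending polynomially on $\|\vecv\|_{\mL^2}$ and $\|\vecw\|_{\mL^2}$. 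I do not foresee any real obstacle here; the only care needed is to keep the $n$-dependence of the constants explicit, since it is exactly this loss of uniformity in $n$ that prevents these maps from being globally Lipschitz on $\mH^1$ and forces the later sections to derive genuinely $n$-uniform a priori estimates for the Galerkin solutions.
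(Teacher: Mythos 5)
Your proposal is correct and follows essentially the same route as the paper: the same decompositions of the differences for $F^2_n$ and $F^3_n$, the $\mL^2$-contractivity of $\Pi_n$ from \eqref{eq: boundPi_n}, and the equivalence of norms on the finite-dimensional space $S_n$ to absorb the $\mL^\infty$ and $\Delta$ factors. The only cosmetic differences are that you bound $\|\Delta\vecv\|_{\mL^2}$ by $\lambda_n\|\vecv\|_{\mL^2}$ via Parseval where the paper uses the cruder constant $\sum_{i=1}^n\lambda_i$, and that you spell out the (affine, hence immediate) case of $G_{nk}$, which the paper leaves implicit.
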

\begin{proof}
For any $\vecv\in S_n$ we have 
\begin{equation*}
\vecv = \sum_{i=1}^n \inpro{\vecv}{\vece_i}_{\mL^2}\vece_i
\quad\text{and}\quad
-\Delta\vecv = \sum_{i=1}^n \lambda_i\inpro{\vecv}{\vece_i}_{\mL^2}\vece_i.
\end{equation*}
Using the triangle inequality and the H\"older inequality, we obtain for any $\vecu,\vecv\in S_n$ 
\begin{align*}
\|F^1_n(\vecu)-F^1_n(\vecv)\|_{\mL^2}
&=
\|\Delta\vecu-\Delta\vecv\|_{\mL^2}
=
\|\sum_{i=1}^n \lambda_i\inpro{\vecu-\vecv}{\vece_i}_{\mL^2}\vece_i\|_{\mL^2}\\
&\leq
\sum_{i=1}^n \lambda_i\bigl|\inpro{\vecu-\vecv}{\vece_i}_{\mL^2}\bigr|
\leq
\left(\sum_{i=1}^n \lambda_i\right)\|\vecu-\vecv\|_{\mL^2},
\end{align*}
and the globally Lipschitz property of $F^1_n$ follows immediately.\\
Next, estimate ~\eqref{eq: boundPi_n}  yields 
\begin{align*}
\|F^2_n(\vecu)-F^2_n(\vecv)\|_{\mL^2}
&=
\|\Pi_n(\vecu\times\Delta\vecu-\vecv\times\Delta\vecv)\|_{\mL^2}
\leq
\|\vecu\times\Delta\vecu-\vecv\times\Delta\vecv\|_{\mL^2}\\
&\leq
\|\vecu\times(\Delta\vecu-\Delta\vecv)\|_{\mL^2}
+
\|(\vecu-\vecv)\times\Delta\vecv\|_{\mL^2}\\
&\leq
\|\vecu\|_{\mL^{\infty}}
\|F^1_n(\vecu)-F^1_n(\vecv)\|_{\mL^2}
+
\|(\vecu-\vecv)\|_{\mL^2}
\|\Delta\vecv\|_{\mL^{\infty}}.
\end{align*}
Since $F^1_n$ is globally Lipschitz and all norms on the finite dimensional space $S_n$ are equivalent, $F^2_n$ is locally Lifshitz.\\
Similarly, the local Lipschitz property of $F^3_n$ follows from the estimate,
\begin{align*}
\|F^3_n(\vecu)-F^3_n(\vecv)\|_{\mL^2}
&\leq
\|\vecu-\vecv\|_{\mL^2}
+
\mu\|\Pi_n(|\vecu|^2\vecu-|\vecv|^2\vecv)\|_{\mL^2}\\
&\leq
\|\vecu-\vecv\|_{\mL^2}
+
\mu\||\vecu|^2\vecu-|\vecv|^2\vecv\|_{\mL^2}\\
&\leq
\|\vecu-\vecv\|_{\mL^2}
+
\mu\||\vecu|^2(\vecu-\vecv)\|_{\mL^2}
+
\mu\|(\vecu-\vecv)\cdot(\vecu+\vecv)\,\vecv\|_{\mL^2}\\
&\leq
\bigl(
1
+
\mu\||\vecu|^2\|_{\mL^{\infty}}
+
\mu\|\vecu+\vecv\|_{\mL^{\infty}}\|\vecv\|_{\mL^{\infty}}\bigr)
\|\vecu-\vecv\|_{\mL^2}.
\end{align*}
which completes the proof of this lemma.
\end{proof}
%--------------------------
\noindent
We first recall the relation between the Stratonovich and It\^o differentials: if $W_k$ is an $\R$-valued standard Wiener process defined on a certain filtered probability space $(\Omega, \mathcal F,\mF,\mP)$ then 
\begin{equation*}
G_{nk}(\vecv)\circ dW_k(t)
=
\frac12 G_{nk}'(\vecv)[G_{nk}(\vecv)]\dt
+
G_{nk}(\vecv)\, dW_k(t),
\end{equation*}
where 
\[
G_{nk}'(\vecv)[G_{nk}(\vecv)] 
= \Pi_n\bigl(G_{nk}(\vecv)\times\vech_k\bigr)\,.
% = \Pi_n\bigl(\Pi_n(\vecv\times\vech_k)\times\vech_k\bigr) 
\]
Therefore ~\eqref{eq: GaLLB} can be rewritten as an It\^o equation 
\begin{equation}\label{eq: GaLLB2}
d\vecu_n = F_n(\vecu_n)\dt + \sum_{k=1}^n G_{nk}(\vecu_n)dW_k(t),
\end{equation}
with 
\[
F_n(\vecu_n) 
:= 
F^1_n(\vecu_n) 
+ F^2_n(\vecu_n) 
- F^3_n(\vecu_n) 
+ \frac12\sum_{k=1}^n\Pi_n\bigl(G_{nk}(\vecu_n)\times\vech_k\bigr).
\]
We now proceed to prove uniform bounds for the approximate solutions $\vecu_n$.
%----------------------------
\begin{lemma}\label{lem: appSo_sta1}
For any $p\geq 1$, $n=1,2,$\dots and every $t\in[0,\infty)$, there holds 
\begin{equation*}
\mE \sup_{s\in[0,t]} \|\vecu_n(s)\|^{2p}_{\mL^2} 
+
\mE\left(\int_0^t
\|\nabla\vecu_n\|^2_{\mL^2}\ds\right)^p
+
\mE\left(\int_0^t
\int_D
\bigl(1+ \mu|\vecu_n|^2\bigr)|\vecu_n|^2\dvx\ds\right)^p
\leq
c,
\end{equation*}
where c is a positive constant depending on $p$, $C_1$ and $h$.
\end{lemma}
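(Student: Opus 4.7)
The plan is to apply It\^o's formula to $\|\vecu_n(s)\|_{\mL^2}^2$ and raise it to the $p$-th power to obtain all three bounds simultaneously, since the gradient and quartic terms will naturally appear on the dissipative side of the energy inequality. First I would work from the It\^o form~\eqref{eq: GaLLB2} and compute $d\|\vecu_n\|^2$. The crucial algebraic observation is that
\[
\inpro{\vecu_n}{\Pi_n(\vecu_n\times\Delta\vecu_n)}_{\mL^2} = \inpro{\vecu_n}{\vecu_n\times\Delta\vecu_n}_{\mL^2} = 0,
\]
because $\vecu_n\in S_n$ and $\veca\cdot(\veca\times\vecb)=0$ pointwise, so the conservative gyromagnetic term drops out. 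Integration by parts on $\Delta\vecu_n$ and the structure of $F_n^3$ then give a clean energy identity in which the only dissipation-bearing terms $2\kappa_1\|\nabla\vecu_n\|_{\mL^2}^2$ and $2\kappa_2\int_D(1+\mu|\vecu_n|^2)|\vecu_n|^2\dvx$ move to the left-hand side with favorable sign.

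Next I would control the remainder of the drift, namely the Stratonovich-to-It\^o correction $\tfrac{\gamma}{2}\sum_k\inpro{\vecu_n}{\Pi_n(G_{nk}(\vecu_n)\times\vech_k)}_{\mL^2}$ together with the It\^o correction $\sum_k\|G_{nk}(\vecu_n)\|_{\mL^2}^2$. Using $\|\Pi_n\vecv\|_{\mL^2}\le\|\vecv\|_{\mL^2}$ from~\eqref{eq: boundPi_n}, the pointwise bound $|\veca\times\vecb|\le|\veca||\vecb|$, and assumption~\eqref{eq: condh}, each of these is dominated by $c(1+\|\vecu_n\|_{\mL^2}^2)$ with a constant depending only on $\gamma$, $\kappa_1$ and $h$. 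Combining yields the differential inequality
\[
d\|\vecu_n\|_{\mL^2}^2 + 2\kappa_1\|\nabla\vecu_n\|_{\mL^2}^2\dt + 2\kappa_2\int_D(1+\mu|\vecu_n|^2)|\vecu_n|^2\dvx\dt \le c\bigl(1+\|\vecu_n\|_{\mL^2}^2\bigr)\dt + dN_t,
\]
where $N_t = 2\sum_k\int_0^t\inpro{\vecu_n(s)}{G_{nk}(\vecu_n(s))}_{\mL^2}\,dW_k(s)$ is a continuous local martingale whose quadratic variation is bounded by $c\int_0^t(1+\|\vecu_n\|_{\mL^2}^2)^2\ds$.

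To upgrade to the $p$-th moment, I would apply It\^o's formula once more to $\Phi(X):=X^p$ with $X(t):=1+\|\vecu_n(t)\|_{\mL^2}^2$. This produces an extra $\tfrac{p(p-1)}{2}X^{p-2}\,d\langle X\rangle_t\le cX^p\dt$ contribution, while the dissipative terms on the left pick up a factor $pX^{p-1}$ and retain their sign. Taking supremum over $[0,t]$, applying the Burkholder--Davis--Gundy inequality to the resulting martingale, and using Young's inequality $ab\le\tfrac12 a^2+\tfrac12 b^2$ to absorb half of $\mE\sup_{s\le t}X(s)^p$ from the stochastic term back to the left-hand side, I obtain
\[
\mE\sup_{s\le t}X(s)^p + \mE\Bigl(\int_0^t\!\!\|\nabla\vecu_n\|_{\mL^2}^2\ds\Bigr)^{\!p} + \mE\Bigl(\int_0^t\!\!\int_D(1+\mu|\vecu_n|^2)|\vecu_n|^2\dvx\ds\Bigr)^{\!p} \le c\bigl(1+X(0)^p\bigr) + c\int_0^t\mE X(s)^p\ds.
\]
Since $X(0)\le 1+\|\vecu_0\|_{\mL^2}^2\le 1+C_1^2$ by~\eqref{eq: boundPi_n}, Gronwall's lemma closes the estimate. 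Strictly speaking, to justify the BDG absorption one first runs the argument with a localising sequence of stopping times $\tau_R$ at which $\|\vecu_n(\tau_R)\|_{\mL^2}\le R$, then removes the localisation by Fatou and monotone convergence.

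The main obstacle I anticipate is bookkeeping of the Stratonovich correction: one must verify that the drift $\tfrac12\sum_k\Pi_n(G_{nk}(\vecu_n)\times\vech_k)$ combined with the It\^o correction produces a term of linear growth in $\|\vecu_n\|_{\mL^2}^2$ rather than one that could compete with (or destroy) the dissipation. This hinges on the product structure of $G_{nk}$ and the summability condition~\eqref{eq: condh}; the cross-product antisymmetry plays no role here (unlike for the $\Delta\vecu_n$ term), so a direct norm estimate is the only tool, which fortunately suffices.
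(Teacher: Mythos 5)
Your overall strategy coincides with the paper's: apply It\^o's formula to $\|\vecu_n\|_{\mL^2}^2$, use the pointwise orthogonality $\vecu_n\cdot(\vecu_n\times\Delta\vecu_n)=0$ together with $\Pi_n\vecu_n=\vecu_n$ to kill the gyromagnetic term, keep the two dissipative terms on the left, bound the noise corrections, and close with Burkholder--Davis--Gundy and Gronwall. Your treatment of the Stratonovich/It\^o corrections by direct norm estimates is legitimate: the paper instead exploits the exact cancellation $\tfrac{\gamma}{2}\inpro{\vecu_n}{G_{nk}(\vecu_n)\times\vech_k}_{\mL^2}=-\tfrac12\|G_{nk}(\vecu_n)\|_{\mL^2}^2+\tfrac12\kappa_1\inpro{\vech_k}{G_{nk}(\vecu_n)}_{\mL^2}$ (and the simplification $\inpro{\vecu_n}{G_{nk}(\vecu_n)}_{\mL^2}=\kappa_1\inpro{\vecu_n}{\vech_k}_{\mL^2}$ in the martingale), but since every term is at most quadratic in $\|\vecu_n\|_{\mL^2}$ and \eqref{eq: condh} gives summability, your cruder bound $c(1+\|\vecu_n\|_{\mL^2}^2)$ is equally sufficient.

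There is, however, a genuine gap in the step that produces the $p$-th moments. Applying It\^o's formula to $X^p$ with $X=1+\|\vecu_n\|_{\mL^2}^2$ turns the dissipative contributions into $p\int_0^t X(s)^{p-1}\bigl(2\kappa_1\|\nabla\vecu_n(s)\|_{\mL^2}^2+\dots\bigr)\ds$. Since $X\ge 1$ this controls $\mE\int_0^t\|\nabla\vecu_n\|_{\mL^2}^2\ds$, but it does \emph{not} control $\mE\bigl(\int_0^t\|\nabla\vecu_n\|_{\mL^2}^2\ds\bigr)^p$, which is what the lemma asserts; your displayed inequality containing the $p$-th powers of the time integrals therefore does not follow from the computation you describe. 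The fix is standard and cheap: once $\mE\sup_{s\le t}\|\vecu_n(s)\|_{\mL^2}^{2p}\le c$ is established, return to the integrated power-one energy inequality, raise it to the power $p$, use Jensen's inequality on $\bigl(\int_0^t\|\vecu_n\|_{\mL^2}^2\ds\bigr)^p$ and BDG on $|N_t|^p$, and conclude. The paper avoids the detour entirely by raising the power-one inequality to the $p$-th power from the outset (its \eqref{eq: ito41}), which delivers all three terms simultaneously; you should either adopt that route or add the supplementary pass just described.
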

%-----------------------
\begin{proof}
Let us consider a function $\psi: S_n\ni\vecv\mapsto \frac12\|\vecv\|^2_{\mL^2}\in \R$ that is $C^\infty$ with 
\begin{equation*}
\psi'(\vecv)(\vecg)  = \inpro{\vecv}{\vecg}_{\mL^2}
\quad\text{and}\quad
\psi''(\vecv)(\vecg,\veck)  = \inpro{\veck}{\vecg}_{\mL^2}
\quad\text{for all }\vecg,\veck\in S_n\,.
\end{equation*}
Using the It\^o Lemma we obtain 
\begin{equation}\label{eq: itopsi}
d\psi(\vecu_n) =   \inpro{\vecu_n}{d\vecu_n}_{\mL^2} + \frac12\inpro{d\vecu_n}{d\vecu_n}_{\mL^2}.
\end{equation}
From~\eqref{eq: GaLLB2} and~\eqref{eq: itopsi}  we deduce that 
\begin{align}\label{eq: ito1}
\frac12 d\|\vecu_n(t)\|^2_{\mL^2} 
&=
\bigl(
\inpro{\vecu_n}{F_n(\vecu_n)}_{\mL^2} 
+
\frac12 \sum_{k=1}^n \|G_{nk}(\vecu_n)\|^2_{\mL^2}
\bigr)\dt
+
\sum_{k=1}^n \inpro{\vecu_n}{G_{nk}(\vecu_n)}_{\mL^2}\,dW_k(t).
\end{align}
We also have 
\begin{align}
\inpro{\vecu_n}{F_n(\vecu_n)}_{\mL^2} 
&=
-\kappa_1\|\nabla \vecu_n(t)\|^2_{\mL^2}
-\kappa_2\int_D \bigl(1+ \mu|\vecu_n|^2\bigr)|\vecu_n|^2\dvx\nn\\
&\quad+\frac12 \sum_{k=1}^n \gamma
\inpro{\vecu_n}{G_{nk}(\vecu_n)\times \vech_k}_{\mL^2}\nn \\
&=
-\kappa_1\|\nabla \vecu_n(t)\|^2_{\mL^2}
-\kappa_2\int_D \bigl(1+ \mu|\vecu_n|^2\bigr)|\vecu_n|^2\dvx\nn\\
&\quad
-\frac12\sum_{k=1}^n \|G_{nk}(\vecu_n)\|^2_{\mL^2}
+\frac12\kappa_1 \sum_{k=1}^n 
\inpro{\vech_k}{G_{nk}(\vecu_n)}_{\mL^2},\label{eq: ito2}\\
\text{and }\quad
\inpro{\vecu_n}{G_{nk}(\vecu_n)}_{\mL^2}
&=
\kappa_1\inpro{\vecu_n}{\vech_k}_{\mL^2}.\label{eq: ito3}
\end{align}
Therefore, taking into account that ~\eqref{eq: ito1}--\eqref{eq: ito3} and~\eqref{eq: boundPi_n} we obtain 
\begin{align}
\frac12 \|\vecu_n(t)\|^2_{\mL^2} 
&+\kappa_1\int_0^t\|\nabla \vecu_n(s)\|^2_{\mL^2}\ds
+\kappa_2\int_0^t\int_D \bigl(1+ \mu|\vecu_n(s)|^2\bigr)|\vecu_n|^2(s)\dvx\ds\nn\\
&=
\frac12 \|\vecu_{0,n}\|^2_{\mL^2} 
+
\frac12\kappa_1 \sum_{k=1}^n \int_0^t
\inpro{\vech_k}{G_{nk}(\vecu_n(s))}_{\mL^2}\ds\nn\\
&\quad+\kappa_1
\sum_{k=1}^n \int_0^t \inpro{\vecu_n(s)}{\vech_k}_{\mL^2}\,dW_k(s)\nn\\
&\leq 
\frac12 \|\vecu_{0,n}\|^2_{\mL^2} 
+
ct\sum_{k=1}^n \|\vech_k\|^2_{\mL^2} 
+
c\bigl(\sum_{k=1}^n \|\vech_k\|^2_{\mL^{\infty}}\bigr)
\int_0^t\|\vecu_n(s)\|_{\mL^2}\ds\nn\\
&\quad+
c
\sum_{k=1}^n \int_0^t \inpro{\vecu_n(s)}{\vech_k}_{\mL^2}\,dW_k(s)\nn\\
&\leq 
c
%\bigl(\|\vecu_{0,n}\|^2_{\mL^2} +\sum_{k=1}^n \|\vech_k\|^2_{\mL^2} \bigr)
+c
\int_0^t\|\vecu_n(s)\|^2_{\mL^2}\ds
+
c
\sum_{k=1}^n \int_0^t \inpro{\vecu_n(s)}{\vech_k}_{\mL^2}\,dW_k(s).\label{eq: ito4}
\end{align}
It follows from~\eqref{eq: ito4} and Jensen's inequality that for any $p\geq 1$ there hold
\begin{align}\label{eq: ito41}
 \|\vecu_n(t)\|^{2p}_{\mL^2} 
 &+
 \bigl(\int_0^t\|\nabla \vecu_n(s)\|^2_{\mL^2}\ds\bigr)^p
 +
 \bigl(\int_0^t\int_D \bigl(1+ \mu|\vecu_n(s)|^2\bigr)|\vecu_n|^2(s)\dvx\ds\bigr)^p\nn\\
&\leq 
 c
 +c
\bigl(\int_0^t\|\vecu_n(s)\|^2_{\mL^2}\ds\bigr)^p
+
c
\bigl|\sum_{k=1}^n \int_0^t \inpro{\vecu_n(s)}{\vech_k}_{\mL^2}\,dW_k(s)\bigr|^p\nn\\
&\leq 
c
+c
\int_0^t\|\vecu_n(s)\|^{2p}_{\mL^2}\ds
+
c
\bigl|\sum_{k=1}^n \int_0^t \inpro{\vecu_n(s)}{\vech_k}_{\mL^2}\,dW_k(s)\bigr|^p
\end{align}
Using the Burkholder-Davis-Gundy inequality and the H\"older inequality, we estimate
\begin{align*}
\mE \sup_{s\in[0,t]}
\bigl|\sum_{k=1}^n \int_0^s \inpro{\vecu_n(\tau)}{\vech_k}_{\mL^2}\,dW_k(\tau)\bigr|^p
&\leq
c\mE \bigl|\sum_{k=1}^n \int_0^t (\inpro{\vecu_n(s)}{\vech_k}_{\mL^2})^2\ds\bigr|^{p/2}\\
&\leq 
c\mE \bigl|\sum_{k=1}^n \int_0^t \|\vecu_n(s)\|_{\mL^2}^2\|\vech_k\|_{\mL^2}^2\ds\bigr|^{p/2}\\
&\leq 
c\bigl(\sum_{k=1}^n \|\vech_k\|^2_{\mL^2}\bigr)^{p/2}\mE \bigl[\bigl|\int_0^t \|\vecu_n(s)\|_{\mL^2}^2\ds\bigr|^{p/2}\bigr]\\
&\leq 
c\mE \bigl[ 1 + (\int_0^t \|\vecu_n(s)\|_{\mL^2}^2\ds)^p\bigr]\\
&\leq 
c + c\int_0^t \mE\|\vecu_n(s)\|_{\mL^2}^{2p}\ds,
\end{align*}
and in view of ~\eqref{eq: ito4}, we find that 
\begin{align*}
\mE\|\vecu_n(t)\|^{2p}_{\mL^2} 
+\mE\left(\int_0^t\|\nabla \vecu_n(s)\|^2_{\mL^2}\ds\right)^p
+&\mE\left(\int_0^t\int_D\left(1+\mu\left|\vecu_n(s)\right|^2\right)\left|\vecu_n(s)\right|^2\dvx\ds
\right)^p\\
&\leq 
c + c\int_0^t \mE\|\vecu_n(s)\|_{\mL^2}^{2p}\ds.
\end{align*}
In particular, for any $p\geq 1$ 
\begin{align*}
\mE\|\vecu_n(t)\|^{2p}_{\mL^2} 
\leq 
c + c\int_0^t \mE\|\vecu_n(s)\|_{\mL^2}^{2p}\ds.
\end{align*}
The result follows immediately from the Gronwall inequality, which completes the 
proof of this lemma.
\end{proof}
%-------------------
\begin{lemma}\label{lem: appSo_sta2}
For any $p\geq 1$, $n=1,2,$\dots and every $t\in[0,\infty)$, there holds 
\begin{equation*}
\mE \sup_{s\in[0,t]}  \|\nabla\vecu_n(s)\|^{2p}_{\mL^2} 
+
\kappa_2\mE\left(\int_0^t
\|\Delta\vecu_n(s)\|^2_{\mL^2}\ds\right)^p
\leq
c,
\end{equation*}
where c is a positive constant depending on $C_1$ and $h$.
\end{lemma}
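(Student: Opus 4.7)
The plan is to run the same It\^o/energy argument as in the proof of Lemma~\ref{lem: appSo_sta1}, but now at the $\mH^1$-level, by applying It\^o's formula to $\psi(\vecv):=\tfrac12\|\nabla\vecv\|_{\mL^2}^2$ along the finite-dimensional It\^o SDE~\eqref{eq: GaLLB2}. The crucial structural fact is that $S_n$ is spanned by eigenfunctions of the Neumann Laplacian, so $\Delta\vecu_n(t)\in S_n$ for every $t$, and every $\vecw\in S_n$ satisfies $\partial_{\vecn}\vecw=0$ on $\partial D$. Consequently the integration by parts $\langle\nabla\vecu_n,\nabla\vecw\rangle_{\mL^2}=-\langle\Delta\vecu_n,\vecw\rangle_{\mL^2}$ has no boundary contribution for $\vecw\in S_n$, and the projector $\Pi_n$ can be dropped whenever it sits next to $\Delta\vecu_n$ inside an $\mL^2$ duality bracket.

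First I would process the drift. The contribution of $\kappa_1\Delta\vecu_n$ produces the expected dissipation $-\kappa_1\|\Delta\vecu_n\|_{\mL^2}^2$. The gyromagnetic contribution is $-\gamma\langle\Delta\vecu_n,\vecu_n\times\Delta\vecu_n\rangle_{\mL^2}$, which \emph{vanishes identically} by the pointwise orthogonality $\veca\cdot(\veca\times\vecb)=0$ applied with $\veca=\Delta\vecu_n$; this is the crucial algebraic cancellation enabled by $\Delta\vecu_n\in S_n$. The Ginzburg--Landau nonlinearity produces, after integration by parts, the non-positive quantity
\[
-\kappa_2\|\nabla\vecu_n\|_{\mL^2}^2-\kappa_2\mu\int_D|\vecu_n|^2|\nabla\vecu_n|^2\dvx-2\kappa_2\mu\int_D\sum_i(\vecu_n\cdot\partial_i\vecu_n)^2\dvx,
\]
which can simply be discarded. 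The It\^o quadratic-variation $\tfrac12\sum_k\|\nabla\Pi_n G_{nk}(\vecu_n)\|_{\mL^2}^2$ is estimated by the Leibniz rule $\nabla(\vecu_n\times\vech_k)=\nabla\vecu_n\times\vech_k+\vecu_n\times\nabla\vech_k$ combined with~\eqref{eq: boundPi_n2} and the hypothesis~\eqref{eq: condh}, yielding a bound of the form $c\,h\,(1+\|\vecu_n\|_{\mH^1}^2)$. The Stratonovich corrector $\tfrac12\sum_k\langle-\Delta\vecu_n,\Pi_n(G_{nk}(\vecu_n)\times\vech_k)\rangle_{\mL^2}$ is handled by one further integration by parts and Young's inequality, with a small multiple of $\|\Delta\vecu_n\|_{\mL^2}^2$ absorbed into the dissipation.

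Collecting everything gives the pathwise inequality
\[
\tfrac12\|\nabla\vecu_n(t)\|_{\mL^2}^2+\tfrac{\kappa_1}{2}\int_0^t\|\Delta\vecu_n(s)\|_{\mL^2}^2\ds\leq\tfrac12\|\nabla\vecu_{0,n}\|_{\mL^2}^2+c\int_0^t\bigl(1+\|\nabla\vecu_n(s)\|_{\mL^2}^2\bigr)\ds+M(t),
\]
where $M(t):=\sum_k\int_0^t\langle-\Delta\vecu_n(s),\Pi_n G_{nk}(\vecu_n(s))\rangle_{\mL^2}\,dW_k(s)$ is a real local martingale whose quadratic variation is controlled by $h$ and $\|\vecu_n\|_{\mH^1}^2$. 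Raising to the $p$-th power, applying Burkholder--Davis--Gundy to $\mE\sup_{s\leq t}|M(s)|^p$, and combining with the $\mL^2$-bounds already established in Lemma~\ref{lem: appSo_sta1}, the argument closes by Gronwall (noting $\|\nabla\vecu_{0,n}\|_{\mL^2}\leq\|\nabla\vecu_0\|_{\mL^2}\leq C_1$ via \eqref{eq: boundPi_n2} and the hypothesis on $\vecu_0$). The main obstacle I foresee is the $\mH^1$-level stochastic bookkeeping: the Stratonovich corrector naively involves terms such as $\langle\Delta\vecu_n,(\vecu_n\times\vech_k)\times\vech_k\rangle_{\mL^2}$, and it is precisely the $\mW^{1,\infty}$ hypothesis~\eqref{eq: condh} on the noise coefficients that permits these to be controlled by a constant times $h(1+\|\vecu_n\|_{\mH^1}^2)$ after one integration by parts.
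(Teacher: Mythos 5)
Your proposal is correct and follows essentially the same route as the paper: It\^o's formula for $\tfrac12\|\nabla\vecu_n\|_{\mL^2}^2$ along \eqref{eq: GaLLB2}, the pointwise cancellation $\langle\Delta\vecu_n,\vecu_n\times\Delta\vecu_n\rangle=0$ (legitimate since $\Delta\vecu_n\in S_n$), discarding the sign-definite Ginzburg--Landau contribution, controlling the noise corrections via \eqref{eq: condh}, and closing with Burkholder--Davis--Gundy, Jensen and Gronwall. The only cosmetic difference is that the paper cancels the It\^o correction $\tfrac12\sum_k\|\nabla G_{nk}(\vecu_n)\|_{\mL^2}^2$ exactly against part of the Stratonovich corrector, leaving the remainder $R(\vecu_n,\vech_k)$ of \eqref{eq: defR}, whereas you estimate the two separately; and when applying BDG you should note that the quadratic variation contains $\|\vecu_n\|_{\mL^2}^2\|\nabla\vecu_n\|_{\mL^2}^2$ (not merely $\|\vecu_n\|_{\mH^1}^2$), which is handled either by keeping $\int_0^t\!\int_D|\vecu_n|^2|\nabla\vecu_n|^2$ on the left-hand side as the paper does, or by Cauchy--Schwarz in $\omega$ together with Lemma~\ref{lem: appSo_sta1}.
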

%-------------------
\begin{proof}
In a similar fashion as in the proof of Lemma~\ref{lem: appSo_sta1}, we consider a function 
\[\psi: S_n\ni\vecv\mapsto \frac12\|\nabla\vecv\|^2_{\mL^2}\in \R.\]
By noting that for all $\vecg,\veck\in S_n$,
\begin{equation*}
\psi'(\vecv)(\vecg)  = \inpro{\nabla\vecv}{\nabla\vecg}_{\mL^2} = -\inpro{\Delta\vecv}{\vecg}_{\mL^2}
\quad\text{and}\quad
\psi''(\vecv)(\vecg,\veck)  = \inpro{\nabla\veck}{\nabla\vecg}_{\mL^2}.
\end{equation*}
and using the It\^o Lemma we get 
\begin{equation*}
d\psi(\vecu_n) 
=   
-\inpro{\Delta\vecu_n}{d\vecu_n}_{\mL^2} + \frac12\inpro{\nabla d\vecu_n}{\nabla d\vecu_n}_{\mL^2}.
\end{equation*}
This equation together with~\eqref{eq: GaLLB2} and~\eqref{eq: itopsi} yields
\begin{align}\label{eq: ito21}
\frac12 d\|\nabla\vecu_n(t)\|^2_{\mL^2} 
&=
\bigl(
-\inpro{\Delta\vecu_n}{F_n(\vecu_n)}_{\mL^2} 
+
\frac12 \sum_{k=1}^n \|\nabla G_{nk}(\vecu_n)\|^2_{\mL^2}
\bigr)\dt\nn\\
&\quad-
\sum_{k=1}^n \inpro{\Delta\vecu_n}{G_{nk}(\vecu_n)}_{\mL^2}\,dW_k(t).
\end{align}
Using~\eqref{eq: adj2}, we infer that
\begin{align}
-\inpro{\Delta\vecu_n}{F_n(\vecu_n)}_{\mL^2} 
&=
-\kappa_1\|\Delta \vecu_n(t)\|^2_{\mL^2}
+\kappa_2\inpro{\Delta\vecu_n}{\bigl(1+ \mu|\vecu_n|^2\bigr)\vecu_n}_{\mL^2}\nn\\
&\quad-\frac12 \sum_{k=1}^n \gamma
\inpro{\Delta\vecu_n}{G_{nk}(\vecu_n)\times \vech_k}_{\mL^2}\nn \\
&=
-\kappa_1\|\nabla \vecu_n(t)\|^2_{\mL^2}
-\kappa_2\int_D \bigl(1+ \mu|\vecu_n|^2\bigr)|\nabla\vecu_n|^2\dvx\nn\\
&\quad
-2\mu\kappa_2\bigl(\inpro{\vecu_n}{\nabla\vecu_n}_{\mL^2}\bigr)^2
-\frac12\sum_{k=1}^n \|\nabla G_{nk}(\vecu_n)\|^2_{\mL^2}\nn\\
&\quad
+\sum_{k=1}^n 
R(\vecu_n,\vech_k),\label{eq: ito22}\\
\text{and }\quad
-\inpro{\Delta\vecu_n}{G_{nk}(\vecu_n)}_{\mL^2}
&=
\inpro{\nabla\vecu_n}{\gamma\vecu_n\times\nabla\vech_k+\kappa_1\nabla\vech_k}_{\mL^2},\label{eq: ito23}
\end{align}
where,
\begin{equation}\label{eq: defR}
R(\vecu_n,\vech_k)
=
 \frac12\gamma
\inpro{\nabla\vecu_n}{G_{nk}(\vecu_n)\times\nabla\vech_k}_{\mL^2}
+
\frac12
\inpro{\gamma\vecu_n\times\nabla\vech_k + \kappa_1\nabla\vech_k}{\nabla G_{nk}(\vecu_n)}_{\mL^2}.
\end{equation}
Therefore, it follows from~\eqref{eq: ito21}--\eqref{eq: ito23} that 
\begin{align}\label{eq: BIV0}
\frac12 \|\nabla\vecu_n(t)\|^2_{\mL^2} 
&+\kappa_1\int_0^t\|\Delta \vecu_n(s)\|^2_{\mL^2}\ds
+\kappa_2\int_0^t\int_D \bigl(1+ \mu|\vecu_n(s,\vecx)|^2\bigr)|\nabla\vecu_n(s,\vecx)|^2\dvx\ds\nn\\
&+2\mu\kappa_2\int_0^t \bigl(\inpro{\vecu_n(s)}{\nabla\vecu_n(s)}_{\mL^2}\bigr)^2\ds\nn\\
&=
\frac12 \|\nabla\vecu_{0,n}\|^2_{\mL^2} 
+
\sum_{k=1}^n \int_0^t
R(\vecu_n(s),\vech_k)\ds\nn\\
&\quad+
\sum_{k=1}^n \int_0^t \inpro{\nabla\vecu_n(s)}{\gamma\vecu_n(s)\times\nabla\vech_k+\kappa_1\nabla\vech_k}_{\mL^2}\,dW_k(s).
\end{align}
Using the Jensen inequality we deduce from~\eqref{eq: BIV0} that 
\begin{align}\label{eq: ito24}
\|\nabla\vecu_n(t)\|^{2p}_{\mL^2} 
&+\bigl(\int_0^t\|\Delta \vecu_n(s)\|^2_{\mL^2}\ds\bigr)^p
+\bigl(\int_0^t\int_D |\vecu_n(s,\vecx)|^2|\nabla\vecu_n(s,\vecx)|^2\dvx\ds\bigr)^p
\nn\\
% &+\bigl(\int_0^t\|\nabla \vecu_n(s)\|^2_{\mL^2}\ds\bigr)^p
%+\bigl(\int_0^t \bigl(\inpro{\vecu_n(s)}{\nabla\vecu_n(s)}_{\mL^2}\bigr)^2\ds\bigr)^p
&\leq 
c
+
c\bigl(\sum_{k=1}^n \int_0^t
\bigl|R(\vecu_n(s),\vech_k)\bigr|\ds\bigr)^p\nn\\
&\bigskip\quad\quad+
c\bigl(\sum_{k=1}^n \int_0^t \inpro{\nabla\vecu_n(s)}{\gamma\vecu_n(s)\times\nabla\vech_k+\kappa_1\nabla\vech_k}_{\mL^2}\,dW_k(s)\bigr)^p.
\end{align}
We now first estimate $\sum_{k=1}^n R(\vecu_n,\vech_k)$ by using H\"older inequality, the assumption~\eqref{eq: condh} 
and Cauchy--Schwarz inequality
\begin{align}\label{eq: BIV01}
\sum_{k=1}^n \bigl|R(\vecu_n,\vech_k)\bigr|
&\leq 
c\sum_{k=1}^n\bigl(\|\nabla\vecu_n\|_{\mL^2}\|G_{n,k}(\vecu_n)\|_{\mL^2}
+
 \bigl(\|\vecu_n\|_{\mL^2}+\|\nabla\vech_k\|_{\mL^2}\bigr)\|\nabla G_{n,k}(\vecu_n)\|_{\mL^2}\bigr)\nn\\
&\leq
c\|\vecu_n\|_{\mH^1}^2 
+ c
\sum_{k=1}^n\|G_{n,k}(\vecu_n)\|_{\mH^1}^2
+
c\sum_{k=1}^n \|\nabla\vech_k\|_{\mL^2}^2\nn\\
&\leq 
c + c\|\vecu_n\|_{\mH^1}^2. 
\end{align}
This inequality together with Lemma~\ref{lem: appSo_sta1} yields
\begin{align}\label{eq: ito25}
\mE\bigl[\bigl(\sum_{k=1}^n \int_0^t\bigl|R(\vecu_n(s),\vech_k)\bigr|\ds\bigr)^p\bigr]
\leq 
c + c\mE\bigl[\bigl(\int_0^t\|\vecu_n(s)\|_{\mH^1}^2\ds\bigr)^p\bigr]
\leq c. 
\end{align}
Then by using the Burkholder-Davis-Gundy inequality,~\eqref{eq: condh} and H\"older inequality, 
we estimate the last term in the right hand side of~\eqref{eq: ito24}
\begin{align}\label{eq: ito26}
\mE \sup_{s\in[0,t]}\bigl|
&\sum_{k=1}^n 
\int_0^s 
\inpro{\nabla\vecu_n(\tau)}{\gamma\vecu_n(\tau)\times\nabla\vech_k+\kappa_1\nabla\vech_k}_{\mL^2}\,dW_k(\tau)
\bigr|^p\nn\\
&\leq 
c\mE
\bigl|
\sum_{k=1}^n \int_0^t\bigl(
\inpro{\nabla\vecu_n(s)}{\gamma\vecu_n(s)\times\nabla\vech_k+\kappa_1\nabla\vech_k}_{\mL^2}\bigr)^2\ds
\bigr|^{p/2}\nn\\
&\leq 
c \mE
\bigl|\sum_{k=1}^n
\int_0^t\bigl(\|\nabla\vech_k\|_{\mL^\infty} \int_D
|\nabla\vecu_n(s,\vecx)|\,|\vecu_n(s,\vecx)|\dvx+\|\nabla\vecu_n(s)\|_{\mL^2}\|\nabla\vech_k\|_{\mL^2}
\bigr)^2\ds
\bigr|^{p/2}\nn\\
&\leq 
c\mE
\bigl|
\int_0^t\bigl(\int_D|\nabla\vecu_n(s,\vecx)|^2\,|\vecu_n(s,\vecx)|^2\dvx
+
c\|\nabla\vecu_n(s)\|_{\mL^2}^2
\bigr)\ds\bigr|^{p/2}\nn\\
&\leq 
c\mE
\bigl|
\int_0^t\int_D|\nabla\vecu_n(s,\vecx)|^2\,|\vecu_n(s,\vecx)|^2\dvx\ds\bigr|^{p/2}
+
c\mE
\bigl|\int_0^t \|\nabla\vecu_n(s)\|_{\mL^2}^2\ds\bigr|^{p/2}\nn\\
&\leq 
\mE\bigl[
\frac12\bigl(\int_0^t\int_D|\nabla\vecu_n(s,\vecx)|^2\,|\vecu_n(s,\vecx)|^2\dvx\ds\bigr)^p
+ \frac{c^2}{2}
\bigr]
+
c\mE
\int_0^t \|\nabla\vecu_n(s)\|_{\mL^2}^{2p}\ds
+c\nn\\
&\leq 
c+
\frac12
\mE\bigl(
\int_0^t\int_D|\nabla\vecu_n(s,\vecx)|^2\,|\vecu_n(s,\vecx)|^2\dvx\ds\bigr)^p
+
c\mE
\int_0^t \|\nabla\vecu_n(s)\|_{\mL^2}^{2p}\ds.
\end{align}
It follows from~\eqref{eq: ito24}--\eqref{eq: ito26} that
\begin{align*}
\mE\|\nabla\vecu_n(t)\|^{2p}_{\mL^2} 
+\mE\bigl(\int_0^t\|\Delta \vecu_n(s)\|^2_{\mL^2}\ds\bigr)^p
&+\frac12\mE\bigl(\int_0^t\int_D|\vecu_n(s,\vecx)|^2|\nabla\vecu_n(s,\vecx)|^2\dvx\ds\bigr)^p\nn\\
%&+2\mu\kappa_2\mE\int_0^t \bigl(\inpro{\vecu_n(s)}{\nabla\vecu_n(s)}_{\mL^2}\bigr)^2\ds
&\leq 
c
+
c\mE
\int_0^t \|\nabla\vecu_n(s)\|_{\mL^2}^{2p}\ds\,.
\end{align*}
The result follows immediately by using Gronwall's inequality, which complete the 
proof of this lemma.
\end{proof}
%-----------------------------------------------------------------------------
\begin{lemma}\label{lem: appSo_sta3}
For  every $t\in[0,T]$, there holds 
% \begin{equation*}
% \mE \bigl[\bigl(\int_0^t\|\vecu_n(s)\times\Delta\vecu_n(s)\|^{r}_{\mL^{3/2}} \ds\bigr)^p\bigr]
% \leq
% c,
% \end{equation*}
% and 
\begin{equation*}
\mE\int_0^t\|(1+\mu|\vecu_n(s)|^2)\vecu_n(s)\|^2_{\mL^2}\ds
\leq c.
\end{equation*}
Here  $c$ is a positive constant depending on $T$, $C_1$ and $h$.
\end{lemma}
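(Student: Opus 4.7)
The plan is to reduce the stated bound to moment estimates on $\|\vecu_n\|_{\mH^1}$ that are already guaranteed by Lemmas~\ref{lem: appSo_sta1} and~\ref{lem: appSo_sta2}, via the Sobolev embedding $\mH^1(D)\hookrightarrow \mL^6(D)$, which holds for $d\le 3$ under the $C^2$ regularity of $\partial D$.

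First I would expand the integrand pointwise in $\omega$ and $s$:
\[
\|(1+\mu|\vecu_n(s)|^2)\vecu_n(s)\|^2_{\mL^2}
\;\le\; 2\|\vecu_n(s)\|^2_{\mL^2} + 2\mu^2\,\|\vecu_n(s)\|^6_{\mL^6}.
\]
The linear piece is immediately controlled: by Lemma~\ref{lem: appSo_sta1} with $p=1$,
$\mE\int_0^t \|\vecu_n(s)\|^2_{\mL^2}\ds \le T\,\mE\sup_{s\in[0,t]}\|\vecu_n(s)\|^2_{\mL^2}\le c$.

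For the cubic piece I would invoke Sobolev embedding to get, with a constant $c$ depending only on $D$,
\[
\|\vecu_n(s)\|^6_{\mL^6}\;\le\; c\,\|\vecu_n(s)\|^6_{\mH^1}
\;\le\; c\bigl(\|\vecu_n(s)\|^6_{\mL^2}+\|\nabla \vecu_n(s)\|^6_{\mL^2}\bigr).
\]
Integrating over $[0,t]$ and taking expectation,
\[
\mE\int_0^t\|\vecu_n(s)\|^6_{\mL^6}\ds
\;\le\; c\,T\,\mE\sup_{s\in[0,t]}\bigl(\|\vecu_n(s)\|^6_{\mL^2}+\|\nabla\vecu_n(s)\|^6_{\mL^2}\bigr),
\]
and the right-hand side is finite and bounded independently of $n$ by applying Lemma~\ref{lem: appSo_sta1} and Lemma~\ref{lem: appSo_sta2} with $p=3$.

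I do not anticipate any real obstacle here; the only non-trivial ingredient is the Sobolev embedding, whose constant depends only on $D$ and not on $n$, so the uniformity in $n$ of the previous moment estimates is preserved. Summing the two pieces yields the desired bound with a constant $c$ depending on $T$, $C_1$, and $h$.
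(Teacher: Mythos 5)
Your proof is correct and follows essentially the same route as the paper: split off the cubic term, bound $\||\vecu_n|^2\vecu_n\|_{\mL^2}^2=\|\vecu_n\|_{\mL^6}^6$ via the Sobolev embedding $\mH^1\hookrightarrow\mL^6$ (valid for $d\le 3$), and control the resulting sixth moments by Lemmas~\ref{lem: appSo_sta1} and~\ref{lem: appSo_sta2} with $p=3$. No issues.
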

%-----------------------------------------------------------------------------
\begin{proof}
% By H\"older inequality and the Sobolev imbedding of $\mH^1$ into $\mL^6$ we have 
% \begin{align*}
% \|\vecu_n(t)\times\Delta\vecu_n(t)\|_{\mL^{3/2}}
% \leq
% \|\vecu_n(t)\|_{\mL^6}
% \|\Delta\vecu_n(t)\|_{\mL^2}
% \leq
% c
% \|\vecu_n(t)\|_{\mH^1}
% \|\Delta\vecu_n(t)\|_{\mL^2},
% \end{align*}
% and hence by using Lemmas~\ref{lem: appSo_sta1}--\ref{lem: appSo_sta2} we deduce
% \begin{align*}
% \mE\bigl[\bigl(\int_0^t\|\vecu_n(s)\times\Delta\vecu_n(s)\|^r_{\mL^{3/2}}\ds\bigr)^p\bigr]
% &\leq
% c
% \mE\bigl[\int_0^t\|\vecu_n(s)\times\Delta\vecu_n(s)\|^{rp}_{\mL^{3/2}}\ds\bigr]\\
% &\leq
% c
% \int_0^t \mE\bigl[
% \|\vecu_n(s)\|_{\mH^1}^{rp}
% \|\Delta\vecu_n(s)\|_{\mL^2}^{rp}\bigr]\ds\\
% &\leq
% c
% \int_0^t \bigl(\mE\bigl[
% \|\vecu_n(s)\|_{\mH^1}^{\frac{2rp}{2-rp}}\bigr]\bigr)^{\frac{2-rp}{2}}\,
% \bigl(\mE\bigl[
% \|\Delta\vecu_n(s)\|_{\mL^2}^2\bigr]\bigr)^{rp/2}\ds\\
% &\leq
% c
% \int_0^t 
% \bigl(\mE\bigl[
% \|\Delta\vecu_n(s)\|_{\mL^2}^2\bigr]\bigr)^{rp/2}\ds\\
% &\leq 
% c\bigl(\mE\int_0^t 
% \|\Delta\vecu_n(s)\|_{\mL^2}^2\ds\bigr)^{rp/2}
% \leq c.
% \end{align*}

% Similarly, 
From Lemma~\ref{lem: appSo_sta1}--\ref{lem: appSo_sta2} and the Sobolev imbedding of $\mH^1$ into $\mL^6$, we have
\begin{equation*}
\mE\bigl[\|\vecu_n^3(t)\|^2_{\mL^2}\bigr]
=
\mE\bigl[\|\vecu_n(t)\|^6_{\mL^6}\bigr]
\leq
\mE\bigl[\|\vecu_n(t)\|^6_{\mH^1}\bigr]
\leq
c,
\end{equation*}
so
\begin{align*}
\mE\bigl[\int_0^t\|\bigl(1+\mu|\vecu_n|^2(s)\bigr)\vecu_n(s)\|^2_{\mL^2}\ds\bigr]
&\leq
2\int_0^t\mE\bigl[\|\vecu_n(t)\|^2_{\mL^2}\bigr]\ds
+
2\mu^2\int_0^t\mE\bigl[\|\vecu_n^3(t)\|^2_{\mL^2}\bigr]\ds\\
&\leq
c,
\end{align*}
which completes the proof of the lemma.
\end{proof}
%-----------------------------------------------------------------------------
\begin{lemma}\label{lem: appSo_sta4}
For $p\geq 1 $, $n=1,2,\dots$, 
\begin{equation}\label{eq: uninf}
\mE\left(\int_0^T\|\vecu_n(s)\|^2_{\mL^{\infty}} \ds\right)^p
\leq c,
\end{equation}
% where $2\leq p\nu< \frac83$ when $D=\R^3$ and 
% $\nu=2$ when $D$ is a Lipschitz domain.

% Furthermore, for $r>0$ and  $q\geq1$ there holds
% \begin{equation}\label{eq: uninf3}
% \mE \bigl[\bigl(\int_0^t\|\vecu_n(s)\times\Delta\vecu_n(s)\|^{r}_{\mL^2} \ds\bigr)^\infty \bigr]
% \leq
% c,
% \end{equation}
% where $\frac32\leq rq <2$ when $D=\R^3$ and $rq=1$ when $D$ is a Lipschitz domain.

Furthermore, for any $r\in\left[1,\frac{4}{3}\right)$ and $p\in[1,\infty)$ 
\begin{equation}\label{xxx}
\mathbb E\left(\int_0^T\left\|u_n(s)\times\Delta u_n(s)\right\|^r_{\mL^2}\,ds\right)^p\le c\,,
\end{equation}
where  $c$ is a positive constant depending on $T$, $C_1$ and $h$.
\end{lemma}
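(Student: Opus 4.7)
Both estimates will be derived from an Agmon-type interpolation inequality combined with the uniform moment bounds from Lemmas~\ref{lem: appSo_sta1}--\ref{lem: appSo_sta2}. For $d\le 3$ and every $\vecv\in\mH^2$,
\[
\|\vecv\|_{\mL^\infty}^2\le c\,\|\vecv\|_{\mH^1}\|\vecv\|_{\mH^2}.
\]
Applied to $\vecu_n$ and combined with Cauchy--Schwarz in time this gives
\[
\int_0^T\|\vecu_n(s)\|_{\mL^\infty}^2\,ds\le c\sqrt T\,\Bigl(\sup_{s\le T}\|\vecu_n(s)\|_{\mH^1}\Bigr)\Bigl(\int_0^T\|\vecu_n(s)\|_{\mH^2}^2\,ds\Bigr)^{1/2}.
\]
Raising to the $p$th power, taking expectation and splitting by Cauchy--Schwarz reduces~\eqref{eq: uninf} to bounds on $\mE\sup_{s\le T}\|\vecu_n\|_{\mH^1}^{2p}$ and $\mE\bigl(\int_0^T\|\vecu_n\|_{\mH^2}^2\,ds\bigr)^p$, both uniform in $n$ by Lemma~\ref{lem: appSo_sta2} together with the Neumann elliptic regularity $\|\vecu\|_{\mH^2}\le c\bigl(\|\vecu\|_{\mL^2}+\|\Delta\vecu\|_{\mL^2}\bigr)$ and Lemma~\ref{lem: appSo_sta1}.

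For~\eqref{xxx}, I begin with the pointwise inequality
\[
\|\vecu_n\times\Delta\vecu_n\|_{\mL^2}\le\|\vecu_n\|_{\mL^\infty}\|\Delta\vecu_n\|_{\mL^2},
\]
and set $s:=\tfrac{2r}{2-r}$, so that the assumption $r<\tfrac43$ is equivalent to $s<4$. H\"older in time with conjugate exponents $\tfrac{2}{2-r}$ and $\tfrac{2}{r}$ yields
\[
\int_0^T\|\vecu_n\times\Delta\vecu_n\|_{\mL^2}^r\,ds\le\Bigl(\int_0^T\|\vecu_n\|_{\mL^\infty}^s\,ds\Bigr)^{(2-r)/2}\Bigl(\int_0^T\|\Delta\vecu_n\|_{\mL^2}^2\,ds\Bigr)^{r/2}.
\]
Applying Agmon again, $\|\vecu_n\|_{\mL^\infty}^s\le c\|\vecu_n\|_{\mH^1}^{s/2}\|\vecu_n\|_{\mH^2}^{s/2}$, followed by H\"older in time on $\|\vecu_n\|_{\mH^2}^{s/2}$ against $\|\vecu_n\|_{\mH^2}^2$ (legal precisely because $s/2<2$), gives
\[
\int_0^T\|\vecu_n\|_{\mL^\infty}^s\,ds\le c\,T^{1-s/4}\Bigl(\sup_{s\le T}\|\vecu_n\|_{\mH^1}\Bigr)^{s/2}\Bigl(\int_0^T\|\vecu_n\|_{\mH^2}^2\,ds\Bigr)^{s/4}.
\]
Substituting back, raising to the $p$th power and splitting the resulting expectation by repeated H\"older / Cauchy--Schwarz reduces the estimate to products of moments of $\sup_{s\le T}\|\vecu_n\|_{\mH^1}$ and $\int_0^T\|\vecu_n\|_{\mH^2}^2\,ds$, all finite to every order by Lemmas~\ref{lem: appSo_sta1}--\ref{lem: appSo_sta2}.

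The main point, and the reason behind the choice of interpolation, is that only the Agmon balance allows $r$ to reach values close to $\tfrac43$. The more naive Gagliardo--Nirenberg form $\|\vecv\|_{\mL^\infty}^2\le c\|\vecv\|_{\mL^2}^{1/2}\|\vecv\|_{\mH^2}^{3/2}$ in $d=3$ puts too much weight on $\|\vecv\|_{\mH^2}$ (which is only $L^2$ in time) and would restrict the estimate to $r<\tfrac87$. Placing a factor $\|\vecv\|_{\mH^1}$ instead, which is pointwise bounded in $(\omega,t)$ thanks to Lemma~\ref{lem: appSo_sta2}, is what pushes the admissible range up to $s<4$, equivalently $r<\tfrac43$; the strict inequality is what keeps the inner H\"older step on $\|\vecu_n\|_{\mH^2}^{s/2}$ safely integrable for every $p\in[1,\infty)$.
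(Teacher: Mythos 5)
Your argument is correct and is essentially the paper's own: both rest on the interpolation $\|\vecv\|_{\mL^\infty}\le c\|\vecv\|_{\mH^1}^{1/2}\|\vecv\|_{\mH^2}^{1/2}$ (the paper routes it through $\mH^{3/2}$), followed by H\"older in time producing the same exponent $3r/4$ on $\int_0^T\|\vecu_n\|_{\mH^2}^2\,dt$ and the same threshold $r<\tfrac43$, with everything reduced to the moment bounds of Lemmas~\ref{lem: appSo_sta1}--\ref{lem: appSo_sta2}. The only cosmetic differences are that the paper proves \eqref{eq: uninf} directly from the embedding $\mH^2\subset\mL^\infty$ rather than via Agmon, and applies its H\"older steps in a slightly different order.
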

%\begin{remark}
%We do not know if estimate \eqref{xxx} holds for $r=\frac43$ even if $p=1$. 
%\end{remark}
%-----------------------------------------------------------------------------
\begin{proof}
% When $D$ is $\R^3$ or a Lipschitz domain,  
Estimate \eqref{eq: uninf} follows immediately from Lemmas~\ref{lem: appSo_sta1} and~\ref{lem: appSo_sta2} and the continuous imbedding $\mH^2\subset\mL^\infty$. 
%\begin{equation}\label{eq: uninf1}
%\mE \|\vecu_n\|_{L^2(0,T,\mH^2)}^{2p} \leq c.
%\end{equation}

% We first consider the case $D=\R^3$. 
% It follows from the Gagliardo-Nirenberg inequality that
% \begin{align*}
% \|\vecu_n(s)\|_{\mL^{\infty}}
% \leq 
% c \|\vecu_n(s)\|_{\mH^2}^{\frac34}\|\vecu_n(s)\|_{\mL^2}^{\frac14}.
% \end{align*}
% By using H\"older inequality, Lemma~\ref{lem: appSo_sta1}; 
% and noting that $\frac{2\nu p}{8-3\nu p}\geq 2$ and $\frac{3 \nu p}{8}\leq 1$, we obtain
% \begin{align*}
% \mE \bigl[\bigl(\int_0^t\|\vecu_n(s)\|^{\nu}_{\mL^{\infty}} \ds\bigr)^p\bigr]
% &\leq c
% \int_0^t\mE \bigl[\|\vecu_n(s)\|^{\nu p}_{\mL^{\infty}} \bigr]\ds\\
% &\leq c
% \int_0^t\mE \bigl[\|\vecu_n(s)\|_{\mH^2}^{\frac34 \nu p}\|\vecu_n(s)\|_{\mL^2}^{\frac14\nu p}\bigr]\ds\\
% &\leq c
% \int_0^t\bigl(\mE \bigl[\|\vecu_n(s)\|_{\mH^2}^2\bigr]\bigr)^{\frac{3 \nu p}{8}}
% \bigl(\mE\bigl[\|\vecu_n(s)\|_{\mL^2}^{\frac{2\nu p}{8-3\nu p}} \bigr]\bigr)^{\frac{8-3\nu p}{8}}\ds\\
% &\leq c
% \int_0^t\bigl(\mE \bigl[\|\vecu_n(s)\|_{\mH^2}^2\bigr]\bigr)^{\frac{3 \nu p}{8}}\ds\\
% &\leq c 
% \bigl(\mE \bigl[\int_0^t\|\vecu_n(s)\|_{\mH^2}^2\ds\bigr]\bigr)^{\frac{3 \nu p}{8}}
% \leq c,
% \end{align*}
% here we use~\eqref{eq: uninf1} to deduce the last inequality.
\noindent 
We will prove \eqref{xxx} for $d=3$.  Using interpolation we obtain for every $t$ (omitted for simplicity) 
\begin{align}\label{eq: ex1}
\left\|u_n\times\Delta u_n\right\|_{\mL^2}&\le\left\|u_n\right\|_{\mL^\infty}\left\|\Delta u_n\right\|_{\mL^2}\nn\\
&\le \left\|u_n\right\|_{\mathbb H^{3/2}}\left\|\Delta u_n\right\|_{\mL^2}\nn\\
&\le \left\|u_n\right\|_{\mathbb H^{1}}^{1/2}\left\|u_n\right\|_{\mathbb H^2}^{1/2}\left\|u_n\right\|_{\mathbb H^2}\nn\\
&\le \left\|u_n\right\|_{\mathbb H^{1}}^{1/2}\left\|u_n\right\|_{\mathbb H^2}^{3/2}\,.
\end{align}
Therefore, using the H\"older inequality we obtain 
\[\begin{aligned}
\int_0^T\left\|u_n\times\Delta u_n\right\|_{\mL^2}^r\,dt&\le\int_0^T\left\|u_n\right\|_{\mathbb H^{1}}^{r/2}\left\|u_n\right\|_{\mathbb H^2}^{3r/2}\\
&\le\left(\int_0^T \left\|u_n\right\|_{\mathbb H^2}^{2}\,dt\right)^{3r/4}\left(\int_0^T \left\|u_n\right\|_{\mathbb H^{1}}^{2r/(4-3r)}\,dt\right)^{(4-3r)/4}
\end{aligned}\]
Using the H\"older inequality again and invoking Lemmas \ref{lem: appSo_sta1} and \ref{lem: appSo_sta2}  we find that 
\[\begin{aligned}
\mathbb E\int_0^T\left\|u_n\times\Delta u_n\right\|_{\mL^2}^r\,dt&\le\left(\mathbb E\int_0^T \left\|u_n\right\|_{\mathbb H^2}^{2}\,dt\right)^{3r/4}
\left(\mathbb E \int_0^T \left\|u_n\right\|_{\mathbb H^{1}}^{2r/(4-3r)}\,dt\right)^{(4-3r)/4}\\
&\le c\,,
\end{aligned}\]
for a certain $c>0$ independent of $n$ and \eqref{xxx} follows for $p=1$. Estimate for arbitrary $p>1$ follows easily by similar arguments as in the proof of Lemma \ref{lem: appSo_sta2}. 
% by choosing
% $\alpha = \frac43, \beta = 4$ when $D=\R^3$ and 
% $\alpha = \beta = 2$ when $D$ is a Lipschitz domain.
\end{proof}
%-----------------------------------------------------------------------------
%%%%%%%%%%%%%%%%%%%%%%%%%%%%%%%%%%
\section{Tightness and construction of new probability space and processes}\label{sec: tight}
Equation~\eqref{eq: GaLLB2} can be written in the following way as an approximation of equation~\eqref{eq: weakLLB}
\begin{align*}
\vecu_n(t) 
&= 
\vecu_n(0)
+\int_0^tF_n^1(\vecu_n(s))-
F_n^3(\vecu_n(s))\ds
+\int_0^tF_n^2(\vecu_n(s))\ds\nn\\
&\quad+
\frac12\sum_{k=1}^n \int_0^t\Pi_n\bigl(G_{nk}(\vecu_n(s))\times\vech_k\bigr)\ds
+\sum_{k=1}^n \int_0^tG_{nk}(\vecu_n(s))dW_k(s)\nn\\
\end{align*}
We will write shortly
\begin{equation}\label{eq_bn}
\vecu_n(t)=\vecu_n(0)+ \sum_{i=1}^3\vecB_{n,i}(\vecu_n)(t)+\vecB_{n,4}(\vecu_n,W)(t),\quad t\in[0,T].
\end{equation}
We now prove a uniform bound for $\vecu_n$.
%-------------------------------------------
\begin{lemma}\label{lem: unbound1}
Let $D\subset \R^d$ be an open bounded domain. 
% \textcolor{blue}{with the $C^2$ extension property}. 
Let $r\in\left[1,\frac43\right)$, $q\in[1,\infty)$, $p>2$ and $\alpha\in\left(0,\frac12\right)$ with $p\alpha>1$. 
Then there exists a constant $c$ depending on $p$, $C_1$ and $h$, such that for all $n\ge 1$
\begin{align}
\mE\|\vecB_{n,1}(\vecu_n)\|_{W^{1,2}(0,T;\mL^2)}^q
&\leq c,\label{eq: unbound1}\\
\mE\|\vecB_{n,2}(\vecu_n)\|^q_{W^{1,r}(0,T;\mL^2)}
&\leq c,\label{eq: unbound2}\\
\mE\|\vecB_{n,3}(\vecu_n)\|_{W^{1,2}(0,T;\mL^2)}^q 
&\leq c,\label{eq: unbound3}\\
\mE\|\vecB_{n,4}(\vecu_n,W)\|_{W^{\alpha,p}(0,T;\mL^2)}^q &\leq c,\label{eq: unbound4}
\end{align}
\textcolor{red}{Moreover, 
\begin{equation}\label{eq: unbound5}
\mE\|\vecu_n\|^q_{W^{\alpha,p}(0,T;\mL^2)}\leq c\,.
\end{equation}}
\end{lemma}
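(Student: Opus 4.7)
My plan is to identify the pieces of \eqref{eq_bn} as
\[\vecB_{n,1}(\vecu_n)(t)=\kappa_1\!\int_0^t\!\Delta\vecu_n\,ds,\qquad \vecB_{n,2}(\vecu_n)(t)=\gamma\!\int_0^t\!\Pi_n\bigl(\vecu_n\times\Delta\vecu_n\bigr)\,ds,\]
\[\vecB_{n,3}(\vecu_n)(t)=\!\int_0^t\!\Bigl(-\kappa_2\Pi_n\bigl((1+\mu|\vecu_n|^2)\vecu_n\bigr)+\tfrac{1}{2}\sum_{k=1}^n\Pi_n\bigl(G_{nk}(\vecu_n)\times\vech_k\bigr)\Bigr)\,ds,\]
and $\vecB_{n,4}(\vecu_n,W)(t)=\sum_{k=1}^n\int_0^t G_{nk}(\vecu_n(s))\,dW_k(s)$, and then reduce each of \eqref{eq: unbound1}--\eqref{eq: unbound3} to an a priori estimate from Section \ref{sec: FG}. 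Since each $\vecB_{n,i}$ ($i=1,2,3$) is an indefinite Bochner integral, its norm in $W^{1,s}(0,T;\mL^2)$ is controlled by the $L^s(0,T;\mL^2)$ norm of the integrand. Thus \eqref{eq: unbound1} follows from the $L^2$-in-time control of $\Delta\vecu_n$ supplied by Lemma \ref{lem: appSo_sta2}; \eqref{eq: unbound2} is exactly \eqref{xxx} of Lemma \ref{lem: appSo_sta4}; and \eqref{eq: unbound3} combines Lemma \ref{lem: appSo_sta3} for the polynomial term with the pointwise bound $\|\Pi_n(G_{nk}(\vecu_n)\times\vech_k)\|_{\mL^2}\le\|\vech_k\|_{\mL^\infty}\|G_{nk}(\vecu_n)\|_{\mL^2}$, summed against \eqref{eq: condh} and closed by Lemma \ref{lem: appSo_sta1}.

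The substantive step is \eqref{eq: unbound4}, fractional regularity in time of the stochastic convolution. Here I would appeal to the Flandoli--Gatarek lemma (see e.g.\ Lemma 2.1 of Da Prato--Zabczyk), which yields, for $p>2$ and $\alpha\in(0,1/2)$,
\[\mE\Bigl\|\sum_{k=1}^n\int_0^\cdot G_{nk}(\vecu_n)\,dW_k\Bigr\|_{W^{\alpha,p}(0,T;\mL^2)}^p\le C_{\alpha,p}\,\mE\!\int_0^T\Bigl(\sum_{k=1}^n\|G_{nk}(\vecu_n(s))\|_{\mL^2}^2\Bigr)^{p/2}\,ds.\]
The Hilbert--Schmidt sum is majorised by $c\,h\,(1+\|\vecu_n(s)\|_{\mL^2}^2)$ thanks to \eqref{eq: condh} and the explicit form of $G_{nk}$, so Lemma \ref{lem: appSo_sta1} closes the estimate; arbitrary $q$-th moments follow by the same recipe applied with a sufficiently large integrability exponent in the Flandoli--Gatarek inequality.

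Finally, \eqref{eq: unbound5} is obtained by summing \eqref{eq: unbound1}--\eqref{eq: unbound4} in the target norm. The deterministic terms $\vecB_{n,1}$ and $\vecB_{n,3}$ lie in $W^{1,2}(0,T;\mL^2)$, which embeds into $W^{\alpha,p}(0,T;\mL^2)$ for every $\alpha<1/2$ and every $p$ on the bounded interval via $W^{1,2}\hookrightarrow C^{0,1/2}\hookrightarrow W^{\alpha,p}$; $\vecB_{n,4}$ already lies in the target space; and $\vecB_{n,2}\in W^{1,r}(0,T;\mL^2)$ for any $r<4/3$ embeds into $W^{\alpha,p}$ whenever $\alpha-1/p\le 1-1/r$. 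Matching these parameters under the constraint $r<4/3$ is the main technical obstacle; taking $r$ sufficiently close to $4/3$ in dependence of the prescribed $\alpha$ and $p$ handles it, and a triangle inequality applied to \eqref{eq_bn} then finishes the proof.
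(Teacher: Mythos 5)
Your argument is correct and follows essentially the same route as the paper's: the deterministic pieces are reduced to the a priori bounds of Lemmas~\ref{lem: appSo_sta1}--\ref{lem: appSo_sta4}, the stochastic integral is handled by the Flandoli--Gatarek-type estimate (which is exactly Lemma~\ref{lem: appendix1} in the Appendix), and \eqref{eq: unbound5} is deduced from the Sobolev embedding $W^{1,r}(0,T;\mL^2)\subset W^{\alpha,p}(0,T;\mL^2)$ under the condition $\frac1p-\alpha>\frac1r-1$, just as in the paper. Your slightly different grouping of the Laplacian, polynomial and Stratonovich-correction terms between $\vecB_{n,1}$ and $\vecB_{n,3}$ is immaterial since both carry $W^{1,2}$ bounds, and the parameter-matching caveat you flag in the last step (the embedding constrains $\alpha$ in terms of $p$ and $r<\frac43$) is present in the paper's argument as well and is harmless for the range of $(\alpha,p)$ actually used in the tightness argument.
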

%----------------------------------------------
\begin{proof}
Inequality~\eqref{eq: unbound1} follows immediately from Lemma \ref{lem: appSo_sta1},  \ref{lem: appSo_sta2} and Lemma  \ref{lem: appSo_sta3}. Inequality ~\eqref{eq: unbound2} is in fact a reformulation of \eqref{xxx}. Inequality \eqref{eq: unbound3} follows from Lemma \ref{lem: appSo_sta1}. 
Estimate \eqref{eq: unbound4} is a consequence of Lemma \ref{lem: appSo_sta1} and Lemma~\ref{lem: appendix1}. In order to prove \eqref{eq: unbound5}, we recall the Sobolev embedding 
\[W^{1,r}(0,T;\mL^2)\subset W^{\alpha,p}(0,T;\mL^2),\quad\mathrm{if}\quad \frac1p-\alpha>\frac1r-1\,.\]
Therefore, using the first four inequalities, we easily deduce \eqref{eq: unbound5}. 
\begin{comment}
and~\eqref{eq: unbound3} are followed 
from~\eqref{eq: boundPi_n}, Lemmas~\ref{lem: appSo_sta2}--\ref{lem: appSo_sta4} 
and Lemma~\ref{lem: appSo_sta1}.
%   In order to prove~\eqref{eq: unbound2}, we use the same arguments as in the proof of~\cite[Lemma 3.6]{Le2016}.

The inequality~\eqref{eq: unbound4} is followed from Lemma~\ref{lem: appSo_sta1} 
and Lemm~\ref{lem: appendix1}. 

Finally, since $rq<2$ together with the first four inequalities and using Lemma~\ref{lem: appendix2} 
we deduce an upper bound~\eqref{eq: unbound5} for $\vecu_n$.
\end{comment}
\end{proof}
% Before proving the tightness of $\{\cL(\vecu_n)\}_{n\in\N}$ , we introduce the following 
% fractional power space.
%------------------------------------------
% \begin{definition}\label{def: fracspace}
% Put $A_1:=I+A$. For any real number $\beta>0$, we define 
% the Hilbert space 
% \[
% X^{\beta} := \bigl\{\vecphi\in\mL^2: \|A_1^{\beta}\vecphi\|_{\mL^2}<\infty\bigr\},
% \]
% where 
% $A_1^{\beta}\vecphi := 
% \sum_{i=1}^{\infty}
% (1+\lambda_i)^{\beta}
% \inpro{\vecphi}{\vece_i}_{\mL^2}\vece_i,$ 
% with the graph norm $\|\cdot\|_{X^{\beta}}=\|A_1^{\beta}\cdot\|_{\mL^2}$. 
% The dual space of $X^{\beta}$ is denoted by $X^{-\beta}$.
% \end{definition}
%----------------------------------------------
\begin{lemma}\label{lem: tight}
If $\beta>0$ and $p\in\left(1,\frac43\right)$, then the measures $\{\cL(\vecu_n)\}_{n\in\N}$ on 
$L^2(0,T;\mH^1)\cap L^p(0,T;\mL^4)\cap C\bigl([0,T];X^{-\beta}\bigr)$ are tight.
\end{lemma}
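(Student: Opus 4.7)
My plan is to establish tightness on each of the three factor spaces $L^2(0,T;\mH^1)$, $L^p(0,T;\mL^4)$, and $C([0,T];X^{-\beta})$ separately; since the topology on the intersection is generated by the sum of the three norms, the intersection of the three compact witnesses yields a compact set in the target that captures the laws $\cL(\vecu_n)$ with probability uniformly close to one. In each case I will exhibit a sublevel set of a stronger norm, prove it has uniformly high probability via Chebyshev's inequality and the a priori estimates of Lemmas~\ref{lem: appSo_sta1}, \ref{lem: appSo_sta2} and \ref{lem: unbound1}, and verify relative compactness by an appropriate Aubin-Lions-Simon type embedding.

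Concretely, I fix $p^*>2$ and $\alpha\in(0,1/2)$ with $\alpha p^*>1$ (as in \eqref{eq: unbound5}), and for $R>0$ put
\[
K_R:=\Bigl\{\vecv:\|\vecv\|_{L^\infty(0,T;\mH^1)}+\|\vecv\|_{L^2(0,T;\mH^2)}+\|\vecv\|_{W^{\alpha,p^*}(0,T;\mL^2)}\le R\Bigr\}.
\]
By Chebyshev and the cited estimates, $\sup_n\mP(\vecu_n\notin K_R)\to 0$ as $R\to\infty$, so it suffices to check that $K_R$ is relatively compact in each factor. For $L^2(0,T;\mH^1)$, Aubin-Lions-Simon applied to the chain $\mH^2\hookrightarrow\!\hookrightarrow\mH^1\hookrightarrow\mL^2$ does the job. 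For $L^p(0,T;\mL^4)$ with $p\in(1,4/3)$, the same lemma applied with the compact embedding $\mH^1\hookrightarrow\!\hookrightarrow\mL^4$ (valid for $d\in\{1,2,3\}$) and the fractional time-regularity $W^{\alpha,p^*}(0,T;\mL^2)$ yields the corresponding compactness. For $C([0,T];X^{-\beta})$, since $\alpha p^*>1$ we have the continuous embedding $W^{\alpha,p^*}(0,T;\mL^2)\hookrightarrow C^{\alpha-1/p^*}([0,T];\mL^2)$, which supplies a uniform modulus of continuity in time with values in $\mL^2$; combined with the compact embedding $\mL^2\hookrightarrow\!\hookrightarrow X^{-\beta}$ (valid for $\beta>0$), Arzel\`a-Ascoli gives relative compactness in $C([0,T];X^{-\beta})$.

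The main technical point I anticipate is the simultaneous choice of parameters $(\alpha,p^*)$ and the invocation of a version of Aubin-Lions-Simon applicable to fractional Sobolev time-regularity; both are standard but need to be carefully cited. Beyond this, the proof is essentially a packaging exercise based on the uniform bounds already established in Section~\ref{sec: FG} and in Lemma~\ref{lem: unbound1}, and I do not expect any serious obstacle.
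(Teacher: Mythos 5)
Your proposal is correct and follows essentially the same route as the paper: a uniform moment bound in $W^{\alpha,p^*}(0,T;\mL^2)\cap L^\infty(0,T;\mH^1)\cap L^2(0,T;\mH^2)$ from Lemmas~\ref{lem: appSo_sta1}, \ref{lem: appSo_sta2} and \eqref{eq: unbound5}, Chebyshev's inequality, and Aubin--Lions--Simon/Flandoli--G\c{a}tarek type compact embeddings into each of the three factor spaces (the paper packages these as \eqref{eq:comp1}--\eqref{eq:comp2}). Your version is in fact slightly more careful than the paper's in decoupling the time-regularity exponent $p^*>2$ from the $p\in\left(1,\frac43\right)$ of the statement and in spelling out the Arzel\`a--Ascoli step for $C([0,T];X^{-\beta})$.
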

%-------------------
\begin{proof}
From Lemmas~\ref{lem: appSo_sta1}--~\ref{lem: appSo_sta2} and~\eqref{eq: unbound5}, we deduce 
\begin{align*}
\mE\|\vecu_n\|_{W^{\alpha,p}(0,T;\mathbb L^2)\cap L^p(0,T;\mH^1)\cap L^2(0,T;\mH^2) }
&\leq c.
\end{align*}
This together with the following compact embeddings % (see e.g.~\cite{Le2016})
\begin{align}
W^{\alpha,p}(0,T;\mathbb L^2)\cap L^p(0,T;\mH^1)
\hookrightarrow C([0,T];X^{-\beta})\cap L^p(0,T;\mL^4),\label{eq:comp1}\\
W^{\alpha,p}(0,T;\mL^2)\cap L^2(0,T;\mH^2)
\hookrightarrow
C([0,T];\mL^2)\cap L^2(0,T;\mH^1)\label{eq:comp2}
\end{align}
imply the tightness of $\{\cL(\vecu_n)\}_{n\in\N}$.
\end{proof}
%----------------------
By Lemma~\ref{lem: tight} and the Prokhorov theorem, we have the following property by noting that 
from  the Kuratowski theorem, the Borel subsets of $C([0,T];S_n)$ are Borel subsets of $L^p(0,T;\mL^4)\cap C([0,T];X^{-\beta})\cap L^2(0,T;\mH^1)$.
%----------------
\begin{proposition}\label{pro: newProcess}
Assume that $\beta>0$ and $p>1$. Then there exist 
\begin{enumerate}
 \item a propability space $(\Omega',\cF',\mP')$,
 \item a sequence $\{(\vecu_n',W_n')\}$ of random variables defined on $(\Omega',\cF',\mP')$ and taking 
 values in  the space $\bigl(L^p(0,T;\mL^4)\cap C([0,T];X^{-\beta})\cap L^2(0,T;\mH^1)\bigr)\times C([0,T];\R^\infty )$,
 \item a random variable $(\vecu',W')$ defined on $(\Omega',\cF',\mP')$ and taking values in 
 $\bigl(L^p(0,T;\mL^4)\cap C([0,T];X^{-\beta})\cap L^2(0,T;\mH^1)\bigr)\times C([0,T];\R)$,
\end{enumerate}
such that in the space $\bigl(L^p(0,T;\mL^4)\cap C([0,T];X^{-\beta})\cap L^2(0,T;\mH^1)\bigr)\times C([0,T];\R^\infty )$ there hold
\begin{enumerate}[label = (\alph*)]
 \item $\cL(\vecu_n,W) = \cL(\vecu_n',W_n') $, $n\in \N$,
 \item $(\vecu_n',W_n')\goto (\vecu',W')$ strongly, $\mP'$-a.s..
\end{enumerate}

Moreover, for every $p\in[1,\infty)$ the sequence $\{\vecu_n'\}_{n\in \N}$ satisfies
\begin{align}
\sup_{n\in \N}\mE'\bigg[\sup_{t\in[0,T]}\|\vecu'_n(t)\|_{\mH^1}^{2p} + \|\vecu'_n\|_{L^2(0,T;\mH^2)}^{2p}\bigg] 
&< \infty,\label{eq: un'bound1}\\
\sup_{n\in \N}\mE'\bigg[\int_0^T\|(1+\mu|\vecu_n'|^2(s))\vecu_n'(s)\|^2_{\mL^2}\ds\bigg]
&< \infty,\label{eq: un'bound2}
\end{align}
and for any $r\in\left[1,\frac43\right)$ and $p\in[1,\infty)$ 
\begin{align}
 \sup_{n\in \N}\mE' \left(\int_0^T\|\vecu_n'(s)\times\Delta\vecu_n'(s)\|^{r}_{\mL^2} \ds\right)^p
&< \infty,\label{eq: un'bound3}\\
%\sup_{n\in \N}\mE' \left(\int_0^T\|\Pi_n\bigl(\vecu_n'(s)\times\Delta\vecu_n'(s)\bigr)\|^\infty _{\mL^2} \ds\right)^p
%&< \infty.\label{eq: un'bound4}
\end{align}
\end{proposition}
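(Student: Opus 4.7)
The plan is to apply Prokhorov's theorem together with Skorokhod's representation theorem to the sequence of joint laws $\{\cL(\vecu_n, W)\}_{n\in\N}$, and then to transport the uniform moment bounds of Section~\ref{sec: FG} to the new probability space via equality of laws. First I would establish joint tightness on
\[
\cX := \bigl(L^p(0,T;\mL^4)\cap C([0,T];X^{-\beta})\cap L^2(0,T;\mH^1)\bigr) \times C([0,T];\R^\infty).
\]
Tightness of the first marginal is Lemma~\ref{lem: tight}; tightness of the second marginal is immediate since the law of $W$ does not depend on $n$ and $C([0,T];\R^\infty)$, equipped with the countable product topology, is Polish. Prokhorov's theorem then furnishes a subsequence (not relabelled) along which $\cL(\vecu_n,W)$ converges weakly on $\cX$.

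Next I would invoke Skorokhod's representation theorem. The key bookkeeping point, flagged in the paragraph preceding the proposition, is that by the Kuratowski theorem the Borel $\sigma$-algebra of each $C([0,T];S_n)$ embeds into the Borel $\sigma$-algebra of $\cX$, so the finite-dimensional laws $\cL(\vecu_n)$ really are defined on the larger ambient space. Since $\cX$ is a separable metric space, in fact Polish as a countable product of Polish spaces, the classical Skorokhod theorem (or Jakubowski's extension, should a non-metrisable auxiliary topology be included later) produces a probability space $(\Omega',\cF',\mP')$ and random variables $(\vecu_n',W_n')$, $(\vecu',W')$ with the two asserted properties: equality of laws $\cL(\vecu_n,W)=\cL(\vecu_n',W_n')$ and $\mP'$-a.s.\ convergence $(\vecu_n',W_n')\to(\vecu',W')$ in $\cX$.

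For the uniform bounds \eqref{eq: un'bound1}--\eqref{eq: un'bound3}, I would exploit the invariance of law. Each of the functionals
\[
\vecu \mapsto \sup_{t\in[0,T]}\|\vecu(t)\|_{\mH^1}^{2p}, \quad \vecu\mapsto \|\vecu\|^{2p}_{L^2(0,T;\mH^2)}, \quad \vecu\mapsto \int_0^T \|(1+\mu|\vecu(s)|^2)\vecu(s)\|_{\mL^2}^2\ds,
\]
and $\vecu\mapsto \bigl(\int_0^T \|\vecu(s)\times\Delta\vecu(s)\|_{\mL^2}^r\ds\bigr)^p$, is Borel measurable on $\cX$ (being either lower semicontinuous or, after completing the natural domain, continuous in an intermediate norm). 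Consequently the estimates already proven for $\vecu_n$ in Lemmas~\ref{lem: appSo_sta1}--\ref{lem: appSo_sta4} transport verbatim to $\vecu_n'$, with the same constants.

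The main obstacle is the Skorokhod step itself. The topology one eventually wants for the passage to the limit -- weak-$\mH^1$ continuity combined with strong $\mL^2$ and weak $L^2(0,T;\mH^2)$ convergence -- is not Polish, and a naive application of the classical Skorokhod theorem to that topology would fail. The workaround adopted here is to carry out the representation on the coarser but genuinely Polish space $\cX$, and to recover the finer regularity and weak continuity of the limit $\vecu'$ a posteriori from the uniform bounds~\eqref{eq: un'bound1}--\eqref{eq: un'bound3}, using Banach--Alaoglu and weak lower semicontinuity of norms; this is precisely the step that will be performed in Section~\ref{sec: exist}.
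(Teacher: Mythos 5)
Your proposal is correct and follows essentially the same route as the paper: the paper likewise obtains the representation by combining the tightness of $\{\cL(\vecu_n)\}$ from Lemma~\ref{lem: tight} with the Prokhorov and Skorokhod theorems, using the Kuratowski theorem to identify the Borel sets of $C([0,T];S_n)$ inside the ambient space, and then transports the uniform bounds of Lemmas~\ref{lem: appSo_sta1}--\ref{lem: appSo_sta4} to the new processes by equality of laws. Your additional remarks on the Polishness of $\cX$, the tightness of the (constant) second marginal, and the Borel measurability of the functionals make explicit points the paper leaves implicit.
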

%---------------------- 
It follows that $\vecu_n'\in C([0,T];S_n)$ and the laws on $C([0,T];S_n)$ of $\vecu_n$ and $\vecu_n'$ are equal.
%%%%%%%%%%%%%%%%%%%%%%%%%%%%%%%%%%
\section{Existence of a weak solution}\label{sec: exist}
Our aim is to prove that $\vecu'$ from Proposition~\ref{pro: newProcess} is a weak solution of the stochastic LLBEs 
according to the Definition~\ref{def: weakso}.
\begin{comment}
Using the notation introduced in \eqref{eq_bn} we define a sequence of $\mL^2$-valued process $(\vecM_n(t))_{t\in[0,T]}$ on the original probability space 
$(\Omega, \cF,\mP)$ by
\begin{align*}
\vecM_n(t)
:=
\vecu_n(t) - \vecu_n(0) - \sum_{i=1}^3\vecB_{n,i}(\vecu_n)(t).
\end{align*}
Since $\vecu_n$ is the solution of~\eqref{eq: GaLLB}, we infer that 
\begin{equation}\label{eq: Mn}
\vecM_n(t) = \vecB_{n,4}(\vecu_n,W)(t),\quad t\in [0,T].
\end{equation}
\end{comment}
We first find an equation satisfied by the new process $(\vecu_n'(t),W_n'(t))_{t\in[0,T]}$ in Subsection 5.1. 
Then in Subsection 5.2 
we  prove the convergence of that equation.

%-----------------------------
\subsection{Equation for the new process}
The following lemmas state that the processes $W'$ and $W_n'$ from Proposition~\ref{pro: newProcess} 
are Brownian motions, which can be proved as in~\cite{BrzGolJer12}.
%----------------
\begin{lemma}\label{lem: Brownian1}
The processes $W_n'$, $n\ge 1$, and $W^\prime$ are Wiener processes defined on $(\Omega',\cF',\mP')$. Moreover, for $0\leq s<t\leq T$, the increments $W'(t)-W'(s)$ are independent of the $\sigma$-algebra generated by 
$\vecu'(r)$ and $W'(r)$ for $r\in [0,s]$.
\end{lemma}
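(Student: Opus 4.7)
The plan is to transfer both the Wiener property and the independence-of-increments property from the original pair $(\vecu_n, W)$ to $(\vecu_n', W_n')$ using the equality of laws in Proposition~\ref{pro: newProcess}, and then push everything through to the limit by means of the almost-sure convergence in Proposition~\ref{pro: newProcess}(b) together with appropriate uniform integrability.

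First I would verify that each $W_n'$ is a Wiener process. Since the coordinates of the original $W = (W_k)$ are independent standard real-valued Brownian motions, the joint law on $C([0,T];\R^\infty)$ of the infinite sequence $W$ is the infinite product Wiener measure. By Proposition~\ref{pro: newProcess}(a), the second marginal of $\cL(\vecu_n, W)=\cL(\vecu_n',W_n')$ coincides with $\cL(W)$, so $W_n'$ has exactly the same distribution as $W$, hence is a family of independent real-valued standard Wiener processes on $(\Omega',\cF',\mP')$. To pass to $W'$, I would use the fact that the set of probability measures on $C([0,T];\R^\infty)$ supported on continuous functions that have the Wiener-measure finite-dimensional distributions is closed under weak convergence; since $W_n'\to W'$ $\mP'$-almost surely by Proposition~\ref{pro: newProcess}(b), the law of $W'$ is the weak limit of the laws of $W_n'$, hence equals Wiener measure. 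Alternatively, one can check directly that for every finite collection of times and indices the characteristic function of $W_n'$ converges to that of a Gaussian increment with the correct covariance, and that the increments remain independent.

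For the independence statement, I would test it against bounded continuous functionals. Fix $0\leq s<t\leq T$, $k\ge 1$, $\lambda\in\R$, and let $\phi\colon L^p(0,s;\mL^4)\cap C([0,s];X^{-\beta})\cap L^2(0,s;\mH^1)\to\R$ and $\chi\colon C([0,s];\R^\infty)\to\R$ be bounded and continuous. On the original space the process $\vecu_n$ solves the finite-dimensional SDE~\eqref{eq: GaLLB2} driven by $W$, so $\vecu_n|_{[0,s]}$ and $W|_{[0,s]}$ are measurable with respect to the $\sigma$-algebra $\mathcal F_s^W$, and since $W_k(t)-W_k(s)$ is independent of $\mathcal F_s^W$,
\[
\mE\bigl[\phi(\vecu_n|_{[0,s]})\,\chi(W|_{[0,s]})\,e^{i\lambda(W_k(t)-W_k(s))}\bigr]
=
\mE\bigl[\phi(\vecu_n|_{[0,s]})\,\chi(W|_{[0,s]})\bigr]\cdot e^{-\lambda^2(t-s)/2}.
\]
By Proposition~\ref{pro: newProcess}(a) the same identity holds with $(\vecu_n,W)$ replaced by $(\vecu_n',W_n')$. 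Now pass $n\to\infty$: the restriction maps into the path spaces on $[0,s]$ are continuous, and the almost-sure convergence of $(\vecu_n',W_n')$ to $(\vecu',W')$ combined with the boundedness of $\phi$, $\chi$ and the exponential factor yields the corresponding identity for $(\vecu',W')$. A monotone-class/functional-form argument then gives independence of $W'(t)-W'(s)$ from the $\sigma$-algebra generated by $\vecu'(r)$ and $W'(r)$ for $r\in[0,s]$. Observing that the joint characteristic functions already determine $W'$ to be Wiener also gives a second proof of that fact.

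The only delicate point is measurability: the path-space $L^p(0,s;\mL^4)\cap C([0,s];X^{-\beta})\cap L^2(0,s;\mH^1)$ is a Polish space and the restriction of $\vecu_n'$ to $[0,s]$ is measurable into this space, so the functional $\phi(\vecu_n'|_{[0,s]})$ is well-defined and bounded-continuous test functions separate laws. The rest is standard and parallels \cite{BrzGolJer12}; no new estimate on $\vecu_n'$ is required beyond those already recorded in Proposition~\ref{pro: newProcess}. I do not anticipate a major obstacle here, as the argument is a textbook application of the Skorokhod-representation machinery once the equality of joint laws in Proposition~\ref{pro: newProcess}(a) is in hand.
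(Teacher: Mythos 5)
Your proposal is correct and follows essentially the same route as the paper, which simply defers to the standard argument of \cite{BrzGolJer12}: transfer of the Wiener law via Proposition~\ref{pro: newProcess}(a), passage to the limit by almost-sure convergence and bounded continuous test functionals, and a monotone-class step for the independence of increments. The only cosmetic remark is that you do not need $\vecu_n|_{[0,s]}$ to be $\mathcal F^W_s$-measurable; adaptedness to the filtration $\mF_s$ with respect to which $W$ is a Wiener process already gives the displayed factorization, which avoids any appeal to strong uniqueness of the Galerkin SDE.
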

%-------------------
%\begin{lemma}\label{lem: Brownian2}
%The process $(W'(t))_{t\in[0,T]}$ is a real-valued Brownian Motion on  $(\Omega',\cF',\mP')$ 
%and if $0\leq s<t\leq T$ then the increment $W'(t)-W'(s)$ is independent of the $\sigma$-algebra generated by 
%$\vecu'(r)$ and $W'(r)$ for $r\in [0,s]$.
%\end{lemma}
%------------------
\par\bigskip
From now on, we work solely in the probability space $\left(\Omega',\mathcal F',\mF',\mP'\right)$ and all the processes are defind on this space. In order to simplify notations, we will write $\left(\Omega,\mathcal F,\mF,\mP\right)$ and the new processes $W_n^\prime,\,\vecu_n^\prime$ etc. will be denoted as $W_n,\vecu_n\ldots$ etc. 
\begin{lemma}\label{lem: eqNewPro}
Let $\vecB_{n,i}$ be defined as in \eqref{eq_bn}. Let a sequence of $\mL^2$-valued processes $(\vecM_n(t))_{t\in[0,T]}$  on 
$(\Omega, \cF,\mP)$ be defined by
\begin{align*}
\vecM_n(t)
:=
\vecu_n(t) - \vecu_n(0) - \sum_{i=1}^3\vecB_{n,i}(\vecu_n)(t).
\end{align*}
Then for each $ t\in[0,T]$ there holds
\[
\vecM_n(t) = \vecB_{n,4}(\vecu_n,W_n)(t)\quad\mP\text{-a.s. }\] 
\end{lemma}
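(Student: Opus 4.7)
The plan is to transfer the identity
\[
\vecu_n(t) - \vecu_n(0) - \sum_{i=1}^{3} \vecB_{n,i}(\vecu_n)(t) = \vecB_{n,4}(\vecu_n, W)(t),
\]
which by construction holds almost surely on the \emph{original} probability space where the Galerkin equation \eqref{eq_bn} was solved, to the renamed space $(\Omega,\cF,\mF,\mP)$ of Proposition~\ref{pro: newProcess} by invoking the equality of joint laws $\cL(\vecu_n,W) = \cL(\vecu_n',W_n')$ and then adopting the convention that primed variables are written without primes. The unified statement to be proved on path space is that the functional
\[
\Psi_n(\vecv,w)(t) := \vecv(t) - \vecv(0) - \sum_{i=1}^{3} \vecB_{n,i}(\vecv)(t) - \vecB_{n,4}(\vecv,w)(t),
\]
defined on $C([0,T];S_n)\times C([0,T];\R^n)$ and valued in $C([0,T];\mL^2)$, vanishes identically on a set of full measure for \emph{any} joint distribution that matches the original one.

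The drift part is easy. Each of $\vecB_{n,1},\vecB_{n,2},\vecB_{n,3}$ is a deterministic, continuous, indeed locally Lipschitz map from $C([0,T];S_n)$ to $C([0,T];\mL^2)$, because the generators $F_n^1, F_n^2, F_n^3$ together with the Stratonovich correction $\tfrac12\sum_{k=1}^n \Pi_n\bigl(G_{nk}(\cdot)\times\vech_k\bigr)$ are locally Lipschitz on the finite-dimensional space $S_n$ by the lemma following \eqref{eq: GaLLB}. Consequently $\vecM_n$ is a Borel measurable function of $\vecu_n$ alone and its value is unchanged by passage between the two probability spaces.

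The main obstacle, and essentially the only technical point, is producing a Borel measurable realization of the It\^o integral $\vecB_{n,4}$ as a functional of the pair of paths $(\vecu_n, W_1,\dots,W_n)$ that is independent of the ambient probability space. I would follow the standard route: approximate the adapted, continuous, $S_n$-valued integrand by dyadic piecewise constant adapted processes, for which $\vecB_{n,4}$ is manifestly a continuous functional of the sample paths, and then pass to the limit in probability using the It\^o isometry together with the uniform $\mL^2$ bounds from Lemma~\ref{lem: appSo_sta1} and the Lipschitz continuity of each $G_{nk}$. This produces a Borel function $\widetilde{\vecB}_{n,4}\colon C([0,T];S_n)\times C([0,T];\R^n)\to C([0,T];\mL^2)$ that coincides almost surely with the genuine It\^o integral under any probability space supporting an adapted continuous $S_n$-valued integrand and $n$ independent Wiener processes.

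With $\Psi_n$ now genuinely Borel, the original equation says that the push-forward of the joint law $\cL(\vecu_n,W)$ (on the original space) under $\Psi_n$ is the Dirac mass at the zero path in $C([0,T];\mL^2)$. By Proposition~\ref{pro: newProcess}(a) the joint law on the new space is identical, hence its push-forward under $\Psi_n$ is also $\delta_0$; this yields $\vecM_n(t) = \vecB_{n,4}(\vecu_n,W_n)(t)$ $\mP$-almost surely for every $t\in[0,T]$, which is the claim. The remaining care concerns making sure that the stochastic integral on the new space is constructed with respect to a filtration for which $W_n$ is a Brownian motion and $\vecu_n$ is adapted, but this follows from Lemma~\ref{lem: Brownian1} applied to the natural augmented filtration generated by $(\vecu_n, W_n)$.
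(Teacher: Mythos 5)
Your argument is correct and is, in substance, the same transfer--of--laws argument that the paper invokes by citing \cite{ZdzisLiang2014}: the drift terms $\vecB_{n,1},\vecB_{n,2},\vecB_{n,3}$ are continuous (indeed locally Lipschitz) functionals of the path $\vecu_n\in C([0,T];S_n)$ because everything is finite dimensional, so the only genuine issue is the stochastic integral, which you handle by realizing it as a limit in probability of Riemann--sum functionals of $(\vecu_n,W)$ and then transferring via the equality of joint laws from Proposition~\ref{pro: newProcess}(a). One caveat on wording: the Borel representative $\widetilde{\vecB}_{n,4}$ obtained as an a.s.\ limit along a subsequence is canonical only modulo null sets of the joint law used to construct it, so your claim that it agrees with the It\^o integral on \emph{any} space supporting such processes is too strong as stated; this does no harm here, because Proposition~\ref{pro: newProcess}(a) supplies exactly the equality of joint laws under which the dyadic sums remain Cauchy in probability on the new space, and Lemma~\ref{lem: Brownian1} guarantees that their limit there is again the It\^o integral, the integrand being continuous and adapted to the filtration generated by $(\vecu_n,W_n)$. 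For comparison, the argument the paper points to avoids constructing a global Borel representative by instead expanding $\mE\bigl\|\vecM_n(t)-\vecB_{n,4}(\vecu_n,W_n)(t)\bigr\|_{\mL^2}^2$ into three terms and showing each depends only on the joint law (Riemann sums enter only in the cross term, together with the It\^o isometry and the moment bounds of Lemma~\ref{lem: appSo_sta1} and \eqref{eq: un'bound1}), so that the three terms cancel; the two routes are interchangeable and rest on the same ingredients.
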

\begin{proof}
The result is obtained by using~\eqref{eq: un'bound1}, Lemma~\ref{lem: appSo_sta1} and the same arguments as in~\cite[Theorem 7.7 (Step 1)]{ZdzisLiang2014}.
\end{proof}
%----------------------------
\subsection{Convergence of the new processes}
Before proving the convergence of  $\{\vecM_n\}$, we find the limits of sequences $\{\vecB_{n,i}(\vecu_n)\}$ for $i=1,2,3$,  
and their relationship with $\vecu'$ in the following lemmas.

%-------------------------
\begin{lemma}\label{lem: u'conve4}
For any $\vecphi\in \mL^4(D)\cap X^{\beta}$, there holds
\begin{align}
\lim_{n\goto \infty}\mE\int_0^t
 \iprod{\Pi_n\bigl[(1+\mu|\vecu_n|^2)\vecu_n(s)\bigr],\vecphi}_{\mL^2} \ds
&= \mE\int_0^t
\iprod{(1+\mu|\vecu|^2)\vecu(s),\vecphi}_{\mL^2}\ds,\label{eq: u'conve4}\\
\lim_{n\goto \infty}\mE\int_0^t
\iprod{\Pi_n\bigl((\vecu_n(s)\times\vech_k)\times\vech_k\bigr),\vecphi}_{\mL^2} \ds
&= \mE\int_0^t
\iprod{(\vecu(s)\times\vech_k)\times\vech_k,\vecphi}_{\mL^2} \ds.\label{eq: u'conve6}
\end{align}
\end{lemma}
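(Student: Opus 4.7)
The plan is to use Vitali's convergence theorem: establish $\mP$-a.s.\ convergence of the time-integrated pairings together with $\omega$-uniform integrability. The $\mP$-a.s.\ convergence comes from Proposition~\ref{pro: newProcess}, which gives $\vecu_n \to \vecu$ in $L^p(0,T;\mL^4) \cap L^2(0,T;\mH^1)$ $\mP$-a.s.\ for any $p \in (1,\infty)$, and the uniform integrability from the moment bounds \eqref{eq: un'bound1}--\eqref{eq: un'bound2}. First I would use the $\mL^2$-self-adjointness of $\Pi_n$ to write $\iprod{\Pi_n f(\vecu_n), \vecphi}_{\mL^2} = \iprod{f(\vecu_n), \Pi_n\vecphi}_{\mL^2}$ and decompose the difference with the limiting pairing as
\[
\iprod{f(\vecu_n) - f(\vecu), \Pi_n\vecphi}_{\mL^2} + \iprod{f(\vecu), \Pi_n\vecphi - \vecphi}_{\mL^2},
\]
where $f(\vecu) = (1+\mu|\vecu|^2)\vecu$ for \eqref{eq: u'conve4} and $f(\vecu) = (\vecu\times\vech_k)\times\vech_k$ for \eqref{eq: u'conve6}.

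\textbf{Cubic term.} Exploiting the algebraic identity
\[
|\vecu_n|^2\vecu_n - |\vecu|^2\vecu = |\vecu_n|^2(\vecu_n - \vecu) + \bigl((\vecu_n - \vecu)\cdot(\vecu_n+\vecu)\bigr)\vecu
\]
and H\"older with exponents $(4,4,4,4)$, the nonlinearity piece is dominated in time-integral by
\[
C\|\vecphi\|_{\mL^4}\int_0^t \bigl(1+\|\vecu_n(s)\|_{\mL^4}^2 + \|\vecu(s)\|_{\mL^4}^2\bigr)\|\vecu_n(s) - \vecu(s)\|_{\mL^4}\ds,
\]
which tends to $0$ $\mP$-a.s.\ by the a.s.\ convergence $\vecu_n \to \vecu$ in $L^p(0,T;\mL^4)$ upon choosing $p$ large enough and applying a further H\"older-in-time. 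Uniform integrability in $\omega$ follows from the uniform $L^q(\Omega; L^\infty(0,T;\mH^1))$ bounds combined with the Sobolev embedding $\mH^1 \hookrightarrow \mL^6 \hookrightarrow \mL^4$. For the projection piece, $\Pi_n\vecphi \to \vecphi$ in $\mL^2$ gives pointwise vanishing, with the majorant $\|(1+\mu|\vecu|^2)\vecu\|_{\mL^2}\|\vecphi\|_{\mL^2} \lesssim (1+\|\vecu\|_{\mH^1}^3)\|\vecphi\|_{\mL^2}$ which is uniformly integrable by the same $\mH^1$-moment bounds. Vitali then yields \eqref{eq: u'conve4}.

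\textbf{Linear term and main obstacle.} The second identity is easier because $\vecw \mapsto (\vecw\times\vech_k)\times\vech_k$ is linear in $\vecw$, so the nonlinearity piece is controlled by
\[
\|\vech_k\|_{\mL^\infty}^2 \int_0^t \|\vecu_n(s) - \vecu(s)\|_{\mL^2}\|\Pi_n\vecphi\|_{\mL^2}\ds,
\]
with $\|\vech_k\|_{\mL^\infty}$ finite by \eqref{eq: condh}, and a.s.\ $L^2(0,T;\mL^2)$ convergence of $\vecu_n$ supplied by Proposition~\ref{pro: newProcess} through the $L^2(0,T;\mH^1)$ component. The projection piece is handled identically using $\|\vecu\times\vech_k\times\vech_k\|_{\mL^2} \leq \|\vech_k\|_{\mL^\infty}^2\|\vecu\|_{\mL^2}$. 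The main obstacle is the cubic term: one must verify that the mode of convergence furnished by the Skorokhod-type construction in Proposition~\ref{pro: newProcess} is strong enough to treat the $|\vecu|^2\vecu$ nonlinearity, and that the $L^q(\Omega;L^\infty(0,T;\mH^1))$-bounds, transferred to the new probability space via equality of laws, deliver the uniform integrability required for Vitali. This is precisely why $L^p(0,T;\mL^4)$ is included in the topology on which tightness is established in Lemma~\ref{lem: tight}.
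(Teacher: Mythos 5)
Your overall strategy is the paper's: establish $\mP$-a.s.\ convergence of the time-integrated pairings, establish uniform integrability in $\omega$ from the moment bounds \eqref{eq: un'bound1}, and conclude by Vitali. The paper delegates the a.s.\ convergence to the deterministic argument of \cite[Lemma 4.3]{Le2016} and gets uniform integrability from a second-moment estimate that pairs $|\vecu_n|^2\vecu_n$ with $\Pi_n\vecphi$ in $\mL^2$ and uses only $\|\Pi_n\vecphi\|_{\mL^2}\le\|\vecphi\|_{\mL^2}$ together with $\mH^1\hookrightarrow\mL^6$; so in spirit you are reconstructing the same proof, and your treatment of \eqref{eq: u'conve6} via linearity is fine.

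There are, however, two concrete gaps in the step where you make the a.s.\ convergence explicit. First, after moving $\Pi_n$ onto $\vecphi$ you bound the nonlinearity piece by H\"older with exponents $(4,4,4,4)$ and claim a majorant proportional to $\|\vecphi\|_{\mL^4}$; this silently uses $\sup_n\|\Pi_n\vecphi\|_{\mL^4}\le C\|\vecphi\|_{\mL^4}$, which is established nowhere ($\Pi_n$ is only shown to be a contraction in the norms of \eqref{eq: boundPi_n}--\eqref{eq: boundPi_n2}) and is not true in general for spectral projections of the Neumann Laplacian. The repair is to split once more, writing $\iprod{f(\vecu_n)-f(\vecu),\Pi_n\vecphi}_{\mL^2}=\iprod{f(\vecu_n)-f(\vecu),\vecphi}_{\mL^2}+\iprod{f(\vecu_n)-f(\vecu),\Pi_n\vecphi-\vecphi}_{\mL^2}$: apply your $(4,4,4,4)$ H\"older only to the first term, where $\vecphi\in\mL^4$ is fixed, and bound the second by $\|f(\vecu_n)-f(\vecu)\|_{\mL^2}\,\|\Pi_n\vecphi-\vecphi\|_{\mL^2}$ using $\||\vecw|^2\vecw\|_{\mL^2}=\|\vecw\|_{\mL^6}^3\le c\|\vecw\|_{\mH^1}^3$ and $\Pi_n\vecphi\to\vecphi$ in $\mL^2$. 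Second, Proposition~\ref{pro: newProcess} (via Lemma~\ref{lem: tight}) supplies a.s.\ convergence in $L^p(0,T;\mL^4)$ only for $p\in\left(1,\frac43\right)$, not ``for any $p\in(1,\infty)$'', so ``choosing $p$ large enough'' is not available; to close the H\"older-in-time step you must instead combine the small-$p$ convergence with the a.s.\ uniform-in-$n$ bound on $\|\vecu_n\|_{L^2(0,T;\mH^1)}$ (which does follow, since an a.s.\ convergent sequence in $L^2(0,T;\mH^1)$ is a.s.\ bounded there), or simply invoke \cite[Lemma 4.3]{Le2016} as the paper does. Both issues are repairable, but as written they are genuine gaps.
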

%---------------------
\begin{proof}

\underline{Proof of~\eqref{eq: u'conve4}:}
By using the same arguments in the proof of~\cite[Lemma 4.3]{Le2016} we have $\mP$-a.s.
\begin{equation}\label{eq: u'conve41}
\lim_{n\goto\infty}
\int_0^t
\iprod{\Pi_n\bigl((1+\mu|\vecu_n|^2(s))\vecu_n(s)\bigr),\vecphi}_{\mL^2}\ds
=
\int_0^t
\iprod{(1+\mu|\vecu|^2(s))\vecu(s),\vecphi}_{\mL^2}\ds.
\end{equation}
We have 
\begin{align*}
\mE\bigl|
\iprod{\Pi_n\bigl((1+\mu|\vecu_n|^2)\vecu_n\bigr),\vecphi}_{L^2(0,t;\mL^2)}\bigr|^2
&\leq 
2\mE\bigl|
\iprod{\vecu_n,\vecphi}_{L^2(0,t;\mL^2)}\bigr|^2 \\
&\quad+
2\mE\bigl|
\iprod{\mu|\vecu_n|^2\vecu_n,\Pi_n\vecphi}_{L^2(0,t;\mL^2)}\bigr|^2\\
&\leq 
2\bigl(\mE\|\vecu_n\|_{L^2(0,T;\mL^2)}^4\bigr)^{\frac12}
\bigl(\mE\|\vecphi\|_{L^2(0,T;\mL^2)}^4\bigr)^{\frac12}\\
&\quad+
2\mu\bigl(\mE\|\vecu_n\|_{L^6(0,T;\mL^6)}^{12}\bigr)^{\frac12}
\bigl(\mE\|\Pi_n\vecphi\|_{L^2(0,T;\mL^2)}^4\bigr)^{\frac12}.
\end{align*}
This together with~\eqref{eq: boundPi_n},~\eqref{eq: un'bound1} and the Sobolev imbedding 
of $\mH^1$ into $\mL^6$ imply
\begin{equation}\label{eq: u'conve42}
\sup_{n\in\N}
\mE\bigl|
\iprod{\Pi_n\bigl((1+\mu|\vecu_n|^2)\vecu_n\bigr),\vecphi}_{L^2(0,t;\mL^2)}\bigr|^2<\infty.
\end{equation}
From~\eqref{eq: u'conve41} and~\eqref{eq: u'conve42}, the first result~\eqref{eq: u'conve4} 
follows immediately by using the Vitali theorem.

\underline{Proof of~\eqref{eq: u'conve6}:}
The proof of~\eqref{eq: u'conve6} is omitted because it is similar to the proof of~\eqref{eq: u'conve4}.
\end{proof}
%--------------------------
\begin{lemma}\label{lem: un'conve7}
Let $\{\vecu_n\}$ and $\vecu$ be the processes defined in Proposition~\ref{pro: newProcess}. Then 
for any $p\geq 1$ there hold
\[
\vecu_n\goto\vecu\text { weakly in } L^{2p}(\Omega;\bigl(L^{\infty}(0,T;\mH^1)\cap L^2(0,T;\mH^2)\bigr)),
\]
hence $\vecu'\in L^{2p}\bigl(\Omega; C([0,T];\mH^1_w)\bigr)$ and
\[
 \mE\sup_{t\in[0,T]}\|\vecu(t)\|_{\mH^1}^{2p} + \mE\left(\int_0^T\|\vecu(t)\|_{\mH^2}^{2}\,dt\right)^p
 < \infty. 
\]
\end{lemma}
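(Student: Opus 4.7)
The plan is to combine the uniform bound \eqref{eq: un'bound1} with weak-compactness arguments, identify the resulting limits with $\vecu$ using the almost sure strong convergence from Proposition~\ref{pro: newProcess}(b), deduce the stated bounds via lower semi-continuity, and finally upgrade the essential supremum to a pointwise supremum by invoking weak continuity in $\mH^1$.

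By \eqref{eq: un'bound1}, for every $p\geq 1$ the sequence $\{\vecu_n\}$ is bounded in the reflexive Banach space $L^{2p}(\Omega; L^2(0,T;\mH^2))$ and in $L^{2p}(\Omega; L^\infty(0,T;\mH^1))$. The former is reflexive and the latter may be viewed as a dual Banach space via the Lebesgue--Bochner duality (using the separability and reflexivity of $\mH^1$). The Banach--Alaoglu theorem therefore furnishes a (relabelled) subsequence that converges weakly in the first space and weakly-$\ast$ in the second, to some limits $z_1$ and $z_2$.

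To identify $z_1 = z_2 = \vecu$, I test both convergences against elementary functions of the form $\varphi(\omega)\chi(t)\vecphi(x)$ with $\varphi \in L^\infty(\Omega)$, $\chi \in C_c^\infty(0,T)$, $\vecphi \in C_c^\infty(D;\R^3)$, and pass to the limit using the dominated convergence theorem: the uniform bound supplies the dominator, while the $\mP$-a.s.\ convergence $\vecu_n \to \vecu$ in $L^2(0,T;\mH^1)$ from Proposition~\ref{pro: newProcess}(b) provides the pointwise limit. Once the identification $z_1 = z_2 = \vecu$ is established, the lower semi-continuity of the norms under weak (resp.\ weak-$\ast$) convergence yields
\[
\mE\left(\esssup_{t\in[0,T]}\|\vecu(t)\|_{\mH^1}^{2p}\right) + \mE\left(\int_0^T\|\vecu(t)\|_{\mH^2}^2\dt\right)^p < \infty.
\]

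It remains to upgrade the essential supremum to a pointwise supremum and to conclude $\vecu\in L^{2p}(\Omega; C([0,T];\mH^1_w))$. Since $\vecu \in L^\infty(0,T;\mH^1)$ $\mP$-a.s.\ by the preceding bound, and also $\vecu \in C([0,T];X^{-\beta})$ $\mP$-a.s.\ with the continuous embedding $\mH^1 \hookrightarrow X^{-\beta}$, the classical Strauss lemma (on continuity in the weak topology of a reflexive space) yields $\vecu \in C([0,T];\mH^1_w)$ $\mP$-a.s.; hence the essential supremum equals the genuine supremum, as required. The steps requiring the most care are the weak-$\ast$ identification in the non-reflexive ambient space $L^{2p}(\Omega; L^\infty(0,T;\mH^1))$, which relies on the Lebesgue--Bochner duality and separability of $\mH^1$, and the subsequent weak-continuity upgrade via Strauss' lemma.
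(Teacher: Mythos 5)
Your proposal is correct and follows essentially the same route as the paper: uniform bounds plus Banach--Alaoglu in the dual Bochner space, identification of the weak limit with $\vecu$ through the $\mP$-a.s.\ convergence from Proposition~\ref{pro: newProcess}(b) tested against a dense class in the predual, and weak lower semicontinuity of the norms, followed by the upgrade to $C([0,T];\mH^1_w)$. One small correction: the passage to the limit in expectation is justified by the Vitali convergence theorem --- the uniform moment bound yields uniform integrability of the pairings, not an actual $L^1(\Omega)$ dominator --- which is exactly the tool the paper invokes; your use of Strauss' lemma for the weak-continuity upgrade is a cleaner formulation of the paper's appeal to completeness of $C([0,T];\mB^1_w)$ and matches what the paper does in Section~6.
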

%---------------------------
\begin{proof}
From $(b)$-Proposition~\ref{pro: newProcess}, we get for
$\omega\in\Omega'$, $\mP'$-a.s.
\begin{align*}
\int_0^T\,
 _{\mL^4}\!\inpro{\vecu_n(t,\omega)}{\vecpsi(t,\omega)}_{\mL^{\frac43}}\dt
\goto 
\int_0^T\,
_{\mL^4}\!\inpro{\vecu(t,\omega)}{\vecpsi(t,\omega)}_{\mL^{\frac43}}\dt,
\end{align*}
for $\vecpsi(\omega)\in L^{\frac43}(0,T;\mL^{\frac43})$.
Moreover, the sequence $\int_0^T\,
_{\mL^4}\!\inpro{\vecu_n(t)}{\vecpsi(t)}_{\mL^{\frac43}}\dt$ 
is uniformly integrable on $\Omega$ if we choose $\vecpsi\in L^4(\Omega; L^{\frac43}(0,T;\mL^{\frac43}))$. 
Indeed, 
\begin{align*}
\sup_{n\in \N}\int_{\Omega}
\bigl|
\int_0^T\,
_{\mL^4}\!\inpro{\vecu_n(t)}{\vecpsi(t)}_{\mL^{\frac43}}\dt
\bigr|^2\,d\mP(\omega)
&\leq 
\sup_{n\in \N}\int_{\Omega}
\bigl|
\int_0^T
\|\vecu_n(t)\|_{\mL^4}
\|\vecpsi(t)\|_{\mL^{\frac43}}\dt
\bigr|^2\,d\mP'(\omega)\\
&\leq 
\sup_{n\in \N}\int_{\Omega}
\|\vecu_n\|_{L^{\infty}(0,T;\mL^4)}^2
\|\vecpsi\|_{L^1(0,T;\mL^{\frac43})}^2\,d\mP(\omega)\\
&\leq 
\sup_{n\in \N}
\|\vecu_n\|_{L^4(\Omega;L^{\infty}(0,T;\mL^4))}^2
\|\vecpsi\|_{L^4(\Omega;L^1(0,T;\mL^{\frac43}))}^2\\
&<\infty,
\end{align*}
here the last inequality is obtained by using~\eqref{eq: un'bound1} and the imbedding of $\mH^1$ into $\mL^4$. 
Thus, by using the Vitali theorem we deduce
\begin{align}\label{eq: un'conve72}
\mathbb E\int_0^T\,
_{\mL^4}\!\inpro{\vecu_n(t)}{\vecpsi(t)}_{\mL^{\frac43}}\dt
\goto 
\mathbb E
\int_0^T\,
_{\mL^4}\!\inpro{\vecu(t)}{\vecpsi(t)}_{\mL^{\frac43}}\dt\,.
\end{align}
On the other hand, by using the Banach-Alaoglu theorem we infer from~\eqref{eq: un'bound1} that 
there exist a subsequence of $\{\vecu_n\}$ (still denoted  by $\{\vecu_n\}$) 
and $\vecv\in L^{2p}(\Omega;L^{\infty}(0,T;\mH^1)\cap L^2(0,T;\mH^2))$ such that 
\[
\vecu_n\goto\vecv\text { weakly in } L^{2p}(\Omega;L^{\infty}(0,T;\mH^1)\cap L^2(0,T;\mH^2)).
\]
In particular, since $L^{2p}(\Omega;L^{\infty}(0,T;\mH^1)\cap L^2(0,T;\mH^2))$ is isomorphic to the space 
$\bigl(L^{\frac{2p}{2p-1}}(\Omega;L^1(0,T;X^{-\frac12})\cap L^2(0,T;X^{-1}))\bigr)^*$, we have
\begin{equation}\label{eq: un'conve71}
\mathbb E\int_0^T\int_D\vecu_n(t,\vecx)\vecphi(t,\vecx)\dvx\dt
 \goto
\mathbb E\int_0^T\int_D\vecv(t,\vecx)\vecphi(t,\vecx)\dvx\dt,
\end{equation}
as $n$ tends to infinity, for any $\vecphi\in L^{\frac{2p}{2p-1}}(\Omega;L^1(0,T;X^{-\frac12})\cap L^2(0,T;X^{-1}))$.

By the density of $L^4(\Omega;L^2(0,T;\mL^{\frac43}))$ in 
$L^{\frac{2p}{2p-1}}(\Omega;L^1(0,T;X^{-\frac12})\cap L^2(0,T;X^{-1}))$, we infer from~\eqref{eq: un'conve72} and~\eqref{eq: un'conve71} 
that $\vecu=\vecv$ in $L^{2p}(\Omega;L^{\infty}(0,T;\mH^1)\cap L^2(0,T;\mH^2))$. 
It follows from Proposition~\eqref{pro: newProcess} that $\vecu_n\in C([0,T];\mH^1_w)$. This together with the weakly convergence 
of $\vecu_n$ to $\vecu$ in $L^{2p}(\Omega;\bigl(L^{\infty}(0,T;\mH^1))$ and the completeness of $C([0,T];\mH^1_w)$ 
imply that $\vecu\in  L^{2p}\bigl(\Omega;C([0,T];\mH^1_w)\bigr)$.

Furthermore, 
since $\vecv$ satisfies~\eqref{eq: un'bound1}, it implies  $\vecu$ also satisfies~\eqref{eq: un'bound1}, which 
completes the proof of the lemma.
\end{proof}
%---------------------------

% From Lemma~\ref{lem: appSo_sta2} and Proposition~\ref{pro: newProcess}, we have $\Delta \vecu_n'$ is 
% uniformly bounded in $L^2(\Omega';L^2(0,T;\mL^2))$. 
% Thus, there exist a subsequence of $\{\Delta \vecu_n'\}$ (still denoted  by $\{\Delta \vecu_n'\}$) 
% and $\vecY\in L^2(\Omega';L^2(0,T;\mL^2))$ such that 
% \[
% \Delta\vecu_n'\goto\vecY\text { weakly in } L^2(\Omega';L^2(0,T;\mL^2)).
% \]
% Together with Lemma~\ref{lem: un'conve7}, for any test function $\vecphi\in L^2(\Omega';L^2(0,T;\mH^1))$ 
% there holds
% \begin{equation*}
% \int_{\Omega'}\int_0^T
% \inpro{\vecY(t,\omega)}{\vecphi(t,\omega)}_{\mL^2}\dt\,d\mP'(\omega)
% = 
% -
% \int_{\Omega'}\int_0^T
% \inpro{\nabla\vecu'(t,\omega)}{\nabla \vecphi(t,\omega)}_{\mL^2}\dt\,d\mP'(\omega).
% \end{equation*}
% %-------------------------
% \begin{notation}\label{no: 1}
% By denoting $\Delta \vecu':=\vecY$, we have $\Delta\vecu'\in L^2(\Omega';L^2(0,T;\mL^2))$.
% \end{notation}
%-------------------------
\noindent
Let $p\in\left(1,\frac43\right)$ be fixed. From~\eqref{xxx} and by the Banach-Alaoglu theorem, there  
exist subsequences of $\{\vecu_n\times\Delta\vecu_n\}$ and of $\{\Pi_n(\vecu_n\times\Delta\vecu_n)\}$ 
(still denoted by $\{\vecu_n\times\Delta\vecu_n\}$, $\{\Pi_n(\vecu_n\times\Delta\vecu_n)\}$, respectively); 
and $\vecZ, \bar{\vecZ}\in L^1(\Omega;L^{p}(0,T;\mL^2))$ such that 
\begin{align}
\vecu_n\times\Delta\vecu_n
&\goto \bar{\vecZ} \text{ weak$^\star$ in }L^p(\Omega;L^{p}(0,T;\mL^2))\label{con: Z}\\
\Pi_n(\vecu_n\times\Delta\vecu_n)
&\goto \vecZ \text{ weak$^\star$ in }L^p(\Omega;L^{p}(0,T;\mL^2)).\label{con: Zbar}
\end{align}
Using the same arguments as in~\cite[Lemma 4.2]{Le2016}, we obtain
\begin{equation}\label{eq: Z=Zbar}
\vecZ = \bar{\vecZ} \text{ in } L^p(\Omega;L^{p}(0,T;\mL^2)).
\end{equation}
%--------------------
\begin{lemma}\label{lem: u'conve5}
For any $\vecphi\in  L^{\infty}(\Omega;L^{4}(0,T;\mW^{1,4}))$, there holds
 \[
  \lim_{n\goto \infty}\mE\int_0^T
\iprod{\Pi_n\bigl(\vecu_n(s)\times\Delta\vecu_n(s)\bigr),\vecphi}_{\mL^2}\ds
= -\mE\int_0^T
\iprod{\vecu(s)\times\nabla\vecu(s),\nabla\vecphi}_{\mL^2}\ds\,.
 \]
\end{lemma}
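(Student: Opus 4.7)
The approach rests on three moves: transfer $\Pi_n$ onto the test function using self-adjointness in $\mL^2$; integrate by parts to remove one derivative from $\vecu_n$; and finally split $\nabla\Pi_n\vecphi = \nabla\vecphi + \nabla(\Pi_n-I)\vecphi$ to identify the claimed limit plus a vanishing remainder. The algebraic step produces the key identity
\[
\iprod{\Pi_n(\vecu_n\times\Delta\vecu_n),\vecphi}_{\mL^2}=\iprod{\vecu_n\times\Delta\vecu_n,\Pi_n\vecphi}_{\mL^2} = -\iprod{\vecu_n\times\nabla\vecu_n,\nabla\Pi_n\vecphi}_{\mL^2},
\]
where the boundary contribution vanishes because $\vecu_n\in S_n$ satisfies $\partial\vecu_n/\partial\vecn=0$ on $\partial D$, and the pointwise identity $\partial_l\vecu_n\times\partial_l\vecu_n=0$ eliminates the quadratic-gradient term.

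For the main piece, Proposition~\ref{pro: newProcess} gives $\vecu_n\to\vecu$ strongly in $L^2(0,T;\mH^1)$ $\mP$-a.s., and the Sobolev embedding $\mH^1\hookrightarrow\mL^4$ (for $d\le 3$) upgrades this to strong convergence in $L^2(0,T;\mL^4)$. Writing $\vecu_n\times\nabla\vecu_n-\vecu\times\nabla\vecu = (\vecu_n-\vecu)\times\nabla\vecu_n + \vecu\times\nabla(\vecu_n-\vecu)$, H\"older's inequality with spatial exponents $(4,2)$ combined with the uniform $L^\infty(0,T;\mH^1)$ bound from Lemma~\ref{lem: appSo_sta2} yields $\vecu_n\times\nabla\vecu_n\to\vecu\times\nabla\vecu$ in $L^2(0,T;\mL^{4/3})$ a.s. Pairing with $\nabla\vecphi\in L^4(0,T;\mL^4)$ delivers almost sure convergence of the inner-product integral, and the Vitali theorem upgrades this to convergence in expectation; the required uniform integrability follows from the higher-moment bounds of Lemmas~\ref{lem: appSo_sta1}--\ref{lem: appSo_sta2} together with $\vecphi\in L^\infty(\Omega;L^4(0,T;\mW^{1,4}))$.

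For the remainder, since $\{\vece_i\}$ is simultaneously orthogonal in $\mL^2$ and in $\mH^1$, the map $\Pi_n$ is a contraction on $\mH^1$ and $\Pi_n\vecpsi\to\vecpsi$ in $\mH^1$ for every $\vecpsi\in\mH^1$. Applied pointwise in $(t,\omega)$ with $\mW^{1,4}\hookrightarrow\mH^1$ and the dominated convergence theorem, this gives $\|\nabla(\Pi_n-I)\vecphi\|_{L^2(0,T;\mL^2)}\to 0$ $\mP$-a.s., with an a.s. dominating function $2\|\nabla\vecphi\|_{L^2(0,T;\mL^2)}$. On the other hand, the embedding $\mH^2\hookrightarrow\mL^\infty$ (for $d\le 3$) together with Lemmas~\ref{lem: appSo_sta1}--\ref{lem: appSo_sta2} bounds $\vecu_n\times\nabla\vecu_n$ in $L^p(\Omega;L^2(0,T;\mL^2))$ for every $p\ge 1$. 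A Cauchy--Schwarz estimate in both space and time, followed once more by the Vitali theorem, then forces
\[
\mE\int_0^T\iprod{\vecu_n\times\nabla\vecu_n,\nabla(\Pi_n-I)\vecphi}_{\mL^2}\,ds\longrightarrow 0.
\]

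The principal technical difficulty is matching Lebesgue exponents across the three factors $\vecu_n$, $\nabla\vecu_n$, and $\nabla\vecphi$ (or $\nabla\Pi_n\vecphi$) so that the identification of the limit goes through without any extra regularity on $\vecphi$; the combined use of $\mW^{1,4}\hookrightarrow\mH^1$ for controlling the projection error and of $\mH^1\hookrightarrow\mL^4$, $\mH^2\hookrightarrow\mL^\infty$ for the nonlinear product is what makes the argument close within the a priori bounds of Section~\ref{sec: FG}.
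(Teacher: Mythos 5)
Your argument is correct, but it handles the projection $\Pi_n$ by a genuinely different mechanism than the paper. The paper never touches $\Pi_n\vecphi$: it first disposes of the projection by observing that $\Pi_n(\vecu_n\times\Delta\vecu_n)$ and $\vecu_n\times\Delta\vecu_n$ have the same weak$^\star$ limit $\vecZ=\bar{\vecZ}$ in $L^p(\Omega;L^p(0,T;\mL^2))$ (the displays \eqref{con: Z}--\eqref{eq: Z=Zbar} established just before the lemma, citing \cite[Lemma 4.2]{Le2016}), so that it suffices to identify the limit of $\mE\int_0^T\iprod{\vecu_n\times\nabla\vecu_n,\nabla\vecphi}_{\mL^2}\,ds$ after one integration by parts; the $\mP$-a.s.\ convergence of that integral is then delegated to \cite[Lemma 4.3]{Le2016} and upgraded to convergence in expectation by Vitali, exactly as in your ``main piece''. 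You instead commute $\Pi_n$ onto the test function by $\mL^2$-self-adjointness, integrate by parts against $\Pi_n\vecphi$ (the boundary term and the term $\sum_l\partial_l\vecu_n\times\partial_l\vecu_n$ indeed vanish), and control the error $\nabla(\Pi_n-I)\vecphi$ using that $\Pi_n$ is a contraction on $\mH^1$ converging strongly to the identity — which is legitimate here because the Neumann eigenfunctions are also $\mH^1$-orthogonal, cf.\ \eqref{eq: adj2}--\eqref{eq: boundPi_n2}. Your route is more self-contained (no weak$^\star$ compactness preamble, no external citation for the identification $\vecZ=\bar{\vecZ}$); the paper's route reuses machinery it needs anyway in Lemma~\ref{lem: newProConver1} and avoids quantifying the convergence $\Pi_n\vecphi\goto\vecphi$.

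Two small points of precision. First, in the remainder term Vitali is the wrong tool: applying it would require an a.s.\ bound on $\sup_n\|\vecu_n\times\nabla\vecu_n\|_{L^2(0,T;\mL^2)}$, which the uniform moment bounds do not provide. The clean closing move is Cauchy--Schwarz in $\omega$ as well: the factor $\mE\|\vecu_n\times\nabla\vecu_n\|^2_{L^2(0,T;\mL^2)}$ is bounded uniformly in $n$ by Lemmas~\ref{lem: appSo_sta2} and~\ref{lem: appSo_sta4}, while $\mE\|\nabla(\Pi_n-I)\vecphi\|^2_{L^2(0,T;\mL^2)}\goto 0$ by dominated convergence (the dominating function $2\|\nabla\vecphi\|_{L^2(0,T;\mL^2)}$ lies in $L^\infty(\Omega)$ by hypothesis). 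Second, in the a.s.-convergence step the phrase ``uniform $L^\infty(0,T;\mH^1)$ bound from Lemma~\ref{lem: appSo_sta2}'' should be replaced by the $\omega$-wise boundedness in $n$ of the convergent sequence $\vecu_n(\omega)$ in $L^2(0,T;\mH^1)$; a bound on moments uniform in $n$ is not an a.s.\ bound uniform in $n$. Neither issue is fatal, and the paper's own proof is no more explicit on the latter point since it outsources it to \cite{Le2016}.
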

%---------------------
\begin{proof}
From~\eqref{con: Z}--\eqref{eq: Z=Zbar} and 
\[
\iprod{\vecu_n(t)\times\Delta\vecu_n(t),\vecphi}_{\mL^2}=-\iprod{\vecu_n(t)\times\nabla\vecu_n(t),\nabla\vecphi}_{\mL^2}, 
\]
it is sufficient to prove that 
\begin{equation}\label{eq: conve51}
  \lim_{n\goto \infty}\mE\int_0^T
\iprod{\vecu_n(s)\times\nabla\vecu_n(s),\nabla\vecphi}_{\mL^2}\ds 
= \mE\int_0^T
\iprod{\vecu(s)\times\nabla\vecu(s),\nabla\vecphi}_{\mL^2}\ds\,.
\end{equation}
Using the same  arguments in the proof of~\cite[Lemma 4.3]{Le2016}, we have $\mP$-a.s.
\begin{equation*}
 \lim_{n\goto \infty}\int_0^T
\iprod{\vecu_n(t)\times\nabla\vecu_n(t),\nabla\vecphi(t)}_{\mL^2}\dt
= \int_0^T
\iprod{\vecu(t)\times\nabla\vecu(t),\nabla\vecphi(t)}_{\mL^2}\dt\,.
\end{equation*}
Moreover, the sequence $\int_0^T
\iprod{\vecu_n(t)\times\nabla\vecu_n(t),\nabla\vecphi(t)}_{\mL^2}\dt $ is uniformly 
integrable on $\Omega$. Indeed, ~\eqref{eq: un'bound1} and the Sobolev 
imbedding of $\mH^1$ into $\mL^4$ yield
\begin{align*}
\sup_{n\in\N}\mathbb E
&\bigl|
\int_0^T
\iprod{\vecu_n(t)\times\nabla\vecu_n(t),\nabla\vecphi(t)}_{\mL^2}\dt
\bigr|^2\\
&\leq 
\sup_{n\in\N}\mathbb E
\bigl|
\int_0^T
\|\vecu_n(t)\|_{\mL^4}^2
\|\nabla\vecu_n(t)\|_{\mL^2}
\|\nabla\vecphi(t)\|_{\mL^4}^2\dt
\bigr|^2\\
&\leq c 
\sup_{n\in\N}\mathbb E\left(
\|\vecu_n\|_{L^{\infty}(0,T;\mH^1)}^6
\|\nabla\vecphi\|_{L^2(0,T;\mL^4)}^2\right)\\
&\leq c
\|\nabla\vecphi\|_{L^4(\Omega;L^2(0,T;\mL^4))}^2
\sup_{n\in\N} \|\vecu_n\|_{L^{12}(\Omega;L^{\infty}(0,T;\mH^1))}^6
<\infty.
\end{align*}
Thus the Vitali theorem yields~\eqref{eq: conve51}, which completes the proof of the lemma.
\end{proof}
%----------------------
\noindent 
Lemma~\ref{lem: u'conve5} together with~\eqref{con: Zbar} yields 
for any test function $\vecphi\in  L^{\infty}(\Omega;L^{4}(0,T;\mW^{1,4}))$ there holds
\begin{align*}
\mathbb E\int_0^T
\inpro{\vecZ(t)}{\vecphi(t)}_{\mL^2}\dt
&= 
-
\mathbb E\int_0^T
\inpro{\vecu(t)\times\nabla\vecu(t)}{\nabla \vecphi(t)}_{\mL^2}\dt\\
&=
\mathbb E\int_0^T
\inpro{\vecu(t)\times\Delta \vecu(t)}{\vecphi(t)}_{\mL^2}\dt,
\end{align*}
where the last equality follows from 
$\vecu\times\Delta \vecu\in L^p(\Omega;L^r(0,T;\mL^2))$ for any $p\in(1,\infty)$ and $r\in\left[1,\frac43\right)$. 
%\hookrightarrow L^1(\Omega';L^{\frac34}(0,T;\mL^2))$ 
%since $\vecu'\in L^{2p}(\Omega';L^{\infty}(0,T;\mH^1)\cap L^2(0,T;\mH^2))$. 
Hence, we deduce 
\begin{equation}\label{no: 2}
\vecZ
=\vecu\times\Delta \vecu
\quad\text{in } L^1(\Omega;L^{r}(0,T;\mL^2)).
\end{equation}
%---------------------
% 
% \begin{notation}\label{no: 2}
% By denoting $\vecu'\times\Delta \vecu':=\vecZ$, we have $\vecu'\times\Delta\vecu'\in L^1(\Omega';L^{\frac34}(0,T;X^{-\beta}))$.
% \end{notation}
The limits of $\{\vecM_n\}$ and $\{\vecB_{n,4}(\vecu_n,W_n)\}$ as $n$ tends to infinity are stated in the following lemmas.
%---------------------
\begin{lemma}\label{lem: newProConver1}
For each $t\in[0,T]$, the sequence of random variables $\vecM_n(t)$ is weakly convergent in $L^{\frac43}(\Omega;X^{-\beta})$ to a limit $\vecM$
that satisfies the following equation
\begin{align*}
\vecM(t)=
&\vecu(t) -\vecu(0)
-
\kappa_1\int_0^t \Delta\vecu(s)\ds 
+
\kappa_2 \int_0^t (1+\mu|\vecu|^2)\vecu(s)\ds \\
&-
\gamma \int_0^t \vecu(s)\times\Delta\vecu(s)\ds 
-
\frac12 \gamma \sum_{k=1}^\infty 
\int_0^t (\vecu(s)\times\vech_k)\times\vech_k\ds
\end{align*}
\end{lemma}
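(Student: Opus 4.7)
The plan is to identify the weak $L^{4/3}(\Omega;X^{-\beta})$-limit of each term in the decomposition
$$\vecM_n(t)=\vecu_n(t)-\vecu_n(0)-\sum_{i=1}^3 \vecB_{n,i}(\vecu_n)(t)$$
supplied by Lemma \ref{lem: eqNewPro}, and to sum the resulting limits. Testing against an arbitrary $\vecphi\in L^{4}(\Omega;X^{\beta}\cap\mL^{\infty}(D))$ (which is dense in the predual $L^{4}(\Omega;X^{\beta})$ of $L^{4/3}(\Omega;X^{-\beta})$), the problem reduces to passing to the limit in five scalar integrals after unpacking the definitions of the $\vecB_{n,i}$.

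For the affine part $\vecu_n(t)-\vecu_n(0)$, I will rely on the almost sure convergence $\vecu_n\to\vecu$ in $C([0,T];X^{-\beta})$ from Proposition \ref{pro: newProcess}, combined with the uniform moment bound \eqref{eq: un'bound1} and Vitali's theorem, to pass to the limit in $L^{4/3}(\Omega)$. For the linear Laplacian contribution $\kappa_1\int_0^t\Delta\vecu_n\,ds$, I will integrate by parts against $\vecphi$ (exploiting the Neumann boundary condition built into the domain of $A$), replace $\iprod{\vecu_n(s),\Delta\vecphi}_{\mL^2}$ by $\iprod{\vecu(s),\Delta\vecphi}_{\mL^2}$ via the weak convergence in $L^{2p}(\Omega;L^{2}(0,T;\mH^{2}))$ provided by Lemma \ref{lem: un'conve7}, and then integrate by parts back. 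The lower-order potential $\kappa_2\int_0^t\Pi_n((1+\mu|\vecu_n|^2)\vecu_n)\,ds$ is handled directly by \eqref{eq: u'conve4}.

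The cross-product term $\gamma\int_0^t\Pi_n(\vecu_n\times\Delta\vecu_n)\,ds$ is the delicate one: owing to the low integrability $p<4/3$ dictated by \eqref{xxx}, one cannot simply apply weak convergence of $\vecu_n$ in $L^{2}(0,T;\mH^2)$ against $\Delta\vecu_n$. Instead I invoke the weak-$\star$ compactness $\Pi_n(\vecu_n\times\Delta\vecu_n)\rightharpoonup\vecZ$ from \eqref{con: Z}--\eqref{con: Zbar}, together with the identification $\vecZ=\vecu\times\Delta\vecu$ in \eqref{no: 2}, which itself rests on Lemma \ref{lem: u'conve5}. This identification is the main technical obstacle of the whole argument.

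The Stratonovich correction $\tfrac12\sum_{k=1}^n\int_0^t\Pi_n(G_{nk}(\vecu_n)\times\vech_k)\,ds$ is treated term by term: expanding
$$G_{nk}(\vecu_n)\times\vech_k=\Pi_n\bigl(\gamma\vecu_n\times\vech_k+\kappa_1\vech_k\bigr)\times\vech_k,$$
and using $\vech_k\times\vech_k=0$, only the contribution $\gamma(\vecu_n\times\vech_k)\times\vech_k$ survives in the limit. Convergence of each summand is furnished by \eqref{eq: u'conve6}, while the tail $\sum_{k>N}$ is controlled uniformly in $n$ by the summability $\sum_{k}\|\vech_k\|_{\mW^{1,\infty}}^{2}<\infty$ from \eqref{eq: condh} and the uniform $\mL^2$-bounds on $\vecu_n$ from Lemma \ref{lem: appSo_sta1}. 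Adding together the limits produced in the preceding three paragraphs yields precisely the stated formula for $\vecM(t)$, completing the identification.
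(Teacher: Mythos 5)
Your proposal is correct and follows essentially the same route as the paper: test $\vecM_n(t)$ against $\vecphi\in L^4(\Omega;X^\beta)$, pass to the limit term by term using the a.s.\ convergence in $C([0,T];X^{-\beta})$ plus Vitali for $\vecu_n(t)$, Lemma~\ref{lem: un'conve7} for the Laplacian, Lemma~\ref{lem: u'conve4} for the potential and Stratonovich terms, and the weak-$\star$ identification \eqref{con: Z}--\eqref{no: 2} via Lemma~\ref{lem: u'conve5} for the cross-product term. The only cosmetic differences are your double integration by parts for the Laplacian (the paper uses the weak $L^2(0,T;\mH^2)$ convergence directly) and your explicit tail estimate for the sum over $k$, which the paper leaves implicit.
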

%------------------------
\begin{proof}
Let $t\in(0,T]$ and $\vecphi\in L^4(\Omega;X^{\beta})$. 
Since $\vecu_n$ converges to $\vecu$ in $C([0,T];X^{-\beta})$ $\mP$-a.s., we infer that
\begin{equation}\label{eq: u'conve1}
\lim_{n\goto \infty}\,
_{X^{-\beta}}\!\iprod{\vecu_n(t),\vecphi}_{X^{\beta}} 
=
_{X^{-\beta}}\!\iprod{\vecu(t),\vecphi}_{X^{\beta}}, 
\quad\text{$\mP'$-a.s.}.
\end{equation}
Furthermore, by using $\mH^1 \hookrightarrow X^{-\beta}$  and~\eqref{eq: un'bound1} we obtain
\begin{align*}
\sup_{n\in \N} \mE
\bigl|\,_{X^{-\beta}}\!\iprod{\vecu_n(t),\vecphi}_{X^{\beta}}\bigr|^2 
\leq
\sup_{n\in \N} 
\bigl(\mE\left[\|\vecu_n(t)\|_{X^{-\beta}}^4\right]\bigr)^{\frac12}
\bigl(\mE\left[\|\vecphi\|_{X^{\beta}}^4\right]\bigr)^{\frac12}
<\infty,
\end{align*}
which implies that $\{_{X^{-\beta}}\!\iprod{\vecu_n(t),\vecphi}_{X^{\beta}}\}_{n\in\N}$ is uniformly integrable. 
Together with~\eqref{eq: u'conve1}, it implies from the Vitali theorem that
\begin{equation}\label{eq: u'conve2}
 \lim_{n\goto \infty}\mE\,_{X^{-\beta}}\!\iprod{\vecu_n(t),\vecphi}_{X^{\beta}} 
= \mE\,_{X^{-\beta}}\!\iprod{\vecu(t),\vecphi}_{X^{\beta}}\,.
\end{equation}
By using Lemmas~\ref{lem: un'conve7} and~\ref{lem: u'conve5}, 
we infer from~\eqref{no: 2} and the embedding 
\[
L^1(\Omega;L^{r}(0,T;\mL^2)) 
\hookrightarrow L^1(\Omega;L^{r}(0,T;X^{-\beta})), 
\]
that
\begin{align}
 \lim_{n\goto \infty}\mE\int_0^t
 \iprod{\Delta\vecu_n(s),\vecphi}_{\mL^2} \ds
&= \mE\int_0^t
\iprod{\Delta\vecu(s),\vecphi}_{\mL^2}\ds\,,\label{eq: u'conve3}\\
\lim_{n\goto \infty}\mE\int_0^t\,
_{X^{-\beta}}\!\iprod{\Pi_n\bigl(\vecu_n(s)\times\Delta\vecu_n(s)\bigr),\vecphi}_{X^{\beta}}\ds 
&= \mE\int_0^t\,
_{X^{-\beta}}\!\iprod{\vecu(s)\times\Delta\vecu(s),\vecphi}_{X^{\beta}}\ds\,,\label{eq: u'conve5}
\end{align}
These limits together with Lemma~\ref{lem: u'conve4} imply that 
\begin{equation*}
\lim_{n\goto \infty}\,
_{L^{\frac43}(\Omega;X^{-\beta})}\!\iprod{\vecM_n(t),\vecphi}_{L^4(\Omega;X^{\beta})}
=\,
_{L^{\frac43}(\Omega;X^{-\beta})}\!\iprod{\vecM(t),\vecphi}_{L^4(\Omega;X^{\beta})},
\end{equation*}
which complete the proof of this Lemma.
\end{proof}
%-------------------------------------
\begin{lemma}\label{lem: newProConver2}
Let $\{\vecu_n\}$ and $\vecu'$ be the processes defined in Proposition~\ref{pro: newProcess}. 
Then there holds
\begin{equation*}
\lim_{n\goto\infty}
\mE
\bigl\|
\sum_{k=1}^n \bigl(\int_0^t
\Pi_n(\gamma\vecu_n(s)\times\vech_k + \kappa_1\vech_k)\, dW_{k,n}(s)
-
\int_0^t
(\gamma\vecu(s)\times\vech_k + \kappa_1\vech_k)\, dW_{k}(s)
\bigr)
\bigr\|_{X^{-\beta}}=0\,.
\end{equation*}
\end{lemma}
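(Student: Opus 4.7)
The plan is to reduce the infinite-sum convergence to three standard subproblems: (a) truncation of the sum over $k$, (b) convergence of the integrand sequence, and (c) convergence of the integral when the integrator changes with $n$.

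\textbf{Step 1: Series truncation.} By assumption \eqref{eq: condh} there exists, for every $\eps>0$, an integer $K_\eps$ such that $\sum_{k>K_\eps}\|\vech_k\|_{\mW^{1,\infty}}^2<\eps$. Applying the It\^o isometry in $\mL^2$ (which embeds continuously into $X^{-\beta}$) together with the uniform bound $\sup_n\mE\sup_{t\le T}\|\vecu_n(t)\|_{\mL^2}^2<\infty$ from Lemma \ref{lem: appSo_sta1}, the tails
\[
\sum_{k=K_\eps+1}^n\int_0^t\Pi_n(\gamma\vecu_n(s)\times\vech_k+\kappa_1\vech_k)\,dW_{k,n}(s),\qquad \sum_{k=K_\eps+1}^{\infty}\int_0^t(\gamma\vecu(s)\times\vech_k+\kappa_1\vech_k)\,dW_k(s),
\]
are both bounded in $L^2(\Omega;X^{-\beta})$-norm by $c\sqrt{\eps}\bigl(1+\sup_n\mE\|\vecu_n\|^2_{L^2(0,T;\mL^2)}\bigr)^{1/2}$, uniformly in $n$. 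So it suffices to prove the convergence for the fixed finite sum $\sum_{k=1}^{K_\eps}$.

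\textbf{Step 2: Splitting for each fixed $k$.} For each $k\le K_\eps$ I would write the difference as
\[
\underbrace{\int_0^t\Pi_n\bigl[\gamma(\vecu_n-\vecu)\times\vech_k\bigr]\,dW_{k,n}}_{(\mathrm I_n)}
+\underbrace{\int_0^t(\Pi_n-I)\bigl[\gamma\vecu\times\vech_k+\kappa_1\vech_k\bigr]\,dW_{k,n}}_{(\mathrm{II}_n)}
+\underbrace{\Bigl(\int_0^t\!\!\xi\,dW_{k,n}-\int_0^t\!\!\xi\,dW_k\Bigr)}_{(\mathrm{III}_n)},
\]
where $\xi:=\gamma\vecu\times\vech_k+\kappa_1\vech_k$. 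For $(\mathrm I_n)$, the It\^o isometry and \eqref{eq: condh} give $\mE\|(\mathrm I_n)\|_{\mL^2}^2\le c\|\vech_k\|_{\mL^\infty}^2\,\mE\int_0^T\|\vecu_n-\vecu\|_{\mL^2}^2\,ds$; the right-hand side tends to zero because, by Proposition \ref{pro: newProcess}(b), $\vecu_n\to\vecu$ in $L^2(0,T;\mH^1)$ hence in $L^2(0,T;\mL^2)$ almost surely, and the integrals are uniformly integrable in $\Omega$ thanks to \eqref{eq: un'bound1}. For $(\mathrm{II}_n)$, the same isometry reduces the problem to $\mE\int_0^T\|(\Pi_n-I)\xi\|_{\mL^2}^2\,ds\to 0$, which follows from $\Pi_n\to I$ strongly on $\mL^2$ and dominated convergence.

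\textbf{Step 3: The main obstacle: varying integrator in $(\mathrm{III}_n)$.} Although the integrand $\xi$ is fixed, the integrator $W_{k,n}$ changes with $n$. I would invoke the classical convergence result for stochastic integrals under Skorokhod-type almost sure convergence (in the form used in \cite{BrzGolJer12} or Ondrejat's thesis \cite{Ondrejat2004}): since $\xi$ is progressively measurable with respect to the filtration generated by $(\vecu,W)$, $\xi\in L^2(\Omega;L^2(0,T;\mL^2))$, each $W_{k,n}$ is a Brownian motion adapted to the $(\vecu_n,W_n)$-filtration, and $W_{k,n}\to W_k$ uniformly on $[0,T]$ almost surely by Lemma \ref{lem: Brownian1} and Proposition \ref{pro: newProcess}(b), one obtains convergence of $(\mathrm{III}_n)$ to zero in probability in $\mL^2$, hence in $X^{-\beta}$.

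\textbf{Step 4: Upgrading to $L^1(\Omega;X^{-\beta})$.} Convergence in probability combined with uniform $L^2$-boundedness of the three pieces (It\^o isometry, \eqref{eq: un'bound1}, and $\sup_n\mE\|W_{k,n}(t)\|^2<\infty$) yields $L^1(\Omega)$-convergence via Vitali's theorem. Summing the $K_\eps$ terms and letting $\eps\to 0$ at the end completes the argument. The principal technical difficulty is Step 3; everything else is routine isometry/dominated-convergence bookkeeping.
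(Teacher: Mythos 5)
The paper itself does not write out a proof of this lemma: it simply defers to the argument of \cite[Lemma 5.2]{BrzGolJer12}, where the convergence of the stochastic integrals is obtained by applying a single convergence lemma for stochastic integrals with \emph{jointly} varying integrand and integrator (integrand $\Pi_n(\gamma\vecu_n\times\vech_k+\kappa_1\vech_k)\to\gamma\vecu\times\vech_k+\kappa_1\vech_k$ in probability in $L^2(0,T;\mL^2)$, integrator $W_{k,n}\to W_k$ in probability in $C([0,T];\R)$). Your Steps 1 and 4 (tail truncation via \eqref{eq: condh} and the It\^o isometry, then Vitali to pass from convergence in probability to convergence of expectations) are correct and are exactly the routine bookkeeping one needs around that core lemma.

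The genuine gap is in your Step 2. The terms $(\mathrm{II}_n)$ and $(\mathrm{III}_n)$, and the $\vecu$-part of $(\mathrm{I}_n)$, are stochastic integrals of $\xi=\gamma\vecu\times\vech_k+\kappa_1\vech_k$ (or of $\vecu\times\vech_k$) against $W_{k,n}$. But $\vecu$ is adapted to the filtration generated by $(\vecu,W)$, while $W_{k,n}$ is a Wiener process only with respect to the filtration generated by $(\vecu_n,W_n)$; on the Skorokhod space there is no reason why $W_{k,n}$ should remain a martingale with respect to the join of these two filtrations, so $\int_0^t\xi\,dW_{k,n}$ is not a well-defined It\^o integral and the isometry you apply to $(\mathrm{II}_n)$ is not available. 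This is precisely the difficulty that the convergence lemma you cite in Step 3 is designed to circumvent: its proof discretises time so that only the adaptedness of $\xi_n$ to its own filtration and of $\xi$ to its own filtration is ever used. The repair is therefore not to split at all, but to apply that lemma once, directly to $\xi_n:=\Pi_n(\gamma\vecu_n\times\vech_k+\kappa_1\vech_k)$ and $\xi$, after verifying $\xi_n\to\xi$ in probability in $L^2(0,T;\mL^2)$ (which follows from $\vecu_n\goto\vecu$ in $L^2(0,T;\mH^1)$ $\mP$-a.s., the strong convergence $\Pi_n\to I$ on $\mL^2$, and \eqref{eq: un'bound1}). With that modification your Steps 1 and 4 close the argument, and the whole proof then coincides with the one the paper imports from \cite{BrzGolJer12}.
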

%-----------------
\begin{proof}
The proof of this lemma is omitted because it is similar as part of the proof of~\cite[Lemma 5.2]{BrzGolJer12}.
\end{proof}
\noindent
\underline{\textbf{Proof of the main theorem (Theorem~\ref{theo: main}):}}
\begin{proof}
From Lemmas~\ref{lem: eqNewPro},~\ref{lem: newProConver1} and~\ref{lem: newProConver2} we deduce 
\[
 \vecM(t) 
 = \sum_{k=1}^\infty \int_0^t
(\gamma\vecu(s)\times\vech_k + \kappa_1\vech_k)\, dW_{k}(s)
\quad\text{in }L^{\frac43}(\Omega;X^{-\beta}),
\]
which means $\{\vecu,W\}$ satisfies~\eqref{eq: mainu}.\\
It remains to prove that $\vecu$ satisfies~\eqref{eq: mainu2}. 
Since $\vecu$ and $W$ satisfy~\eqref{eq: mainu} $\mP$-a.s., for $0\leq s<t\leq T$ we have 
\begin{align*}
\vecu'(t) -\vecu(\tau)
&=
\kappa_1\int_\tau^t \Delta\vecu(s)\ds 
+
\gamma \int_\tau^t \vecu(s)\times\Delta\vecu(s)\ds
-
\kappa_2 \int_\tau^t (1+\mu|\vecu|^2)\vecu'(s)\ds  \nn\\
&\quad+
\frac{\gamma}{2}
\sum_{k=1}^\infty 
\int_\tau^t
(\vecu(s)\times\vech_k)\times\vech_k\ds
+
\gamma
\sum_{k=1}^\infty 
\int_\tau^t
\vecu(s)\times\vech_k\, dW_k(s).
\end{align*}
By the Minkowski inequality and the the embedding $\mL^2\hookrightarrow\mL^{\frac32}$, we obtain for any $p\geq  1$
\begin{align}\label{eq: Holcont}
\mE\|\vecu(t) -\vecu(\tau)\|_{\mL^{3/2}}^{2p}
&\leq
c\mE\left(\int_\tau^t \|\Delta\vecu(s)\|_{\mL^{2}}\ds \right)^{2p}
+
c \mE\left(\int_\tau^t \|\vecu(s)\times\Delta\vecu(s)\|_{\mL^{3/2}}\ds\right)^{2p}\nn\\
&\quad
+
c \mE\left(\int_\tau^t \|\vecu(s)\|_{\mL^{2}}\ds \right)^{2p}
+
c \mE\left(\int_\tau^t \||\vecu|^2\vecu(s)\|_{\mL^{2}}\ds \right)^{2p}\nn\\
&\quad
+
c
\mE\left(
\int_\tau^t
\sum_{k=1}^\infty \|(\vecu(s)\times\vech_k)\times\vech_k\|_{\mL^{2}}\ds\right)^{2p}\nn\\
&\quad+
c\sum_{k=1}^\infty \mE\left(
\sum_{k=1}^\infty \|\int_\tau^t
\vecu(s)\times\vech_k\, dW_k(s)\|_{\mL^{2}}\right)^{2p}.
\end{align}
The following estimates follow from $\vecu\in L^{2p}\bigl(\Omega;\bigl(L^{\infty}(0,T;\mH^1)\cap L^2(0,T;\mH^2)\bigr)\bigr)$ 
and the embedding $\mH^1\hookrightarrow\mL^6$,
\begin{align*}
\mE\left(\int_\tau^t \|\Delta\vecu(s)\|_{\mL^{2}}\ds \right)^{2p}
&\leq 
(t-\tau)^p\mE\left(\int_0^T \|\Delta\vecu(s)\|_{\mL^{2}}^2\ds \right)^p
\leq 
c(t-\tau)^p;\\
\mE\left(\int_\tau^t \|\vecu(s)\times\Delta\vecu(s)\|_{\mL^{3/2}}\ds\right) ^{2p}
&\leq 
\mE\left(\int_\tau^t \|\vecu(s)\|_{\mL^{6}}\|\Delta\vecu(s)\|_{\mL^{2}}\ds\right) ^{2p}\\
&\leq 
c(t-\tau)^p
\mE\left(\|\vecu\|_{L^\infty(0,T;\mH^1)}^{2p}\bigl(\int_0^T \|\Delta\vecu(s)\|_{\mL^{2}}^2\ds\bigr)^p\right)\\
&\leq 
c(t-\tau)^p;\\
\mE\bigl[\int_\tau^t \|\vecu(s)\|_{\mL^{2}}\ds \bigr]^{2p}
&\leq 
(t-\tau)^{2p}\mE\bigl[\|\vecu\|_{L^{\infty}(0,T;\mL^2)}^{2p}\bigr]
\leq 
c(t-\tau)^{2p};\\
\mE\bigl[\int_\tau^t \||\vecu|^2\vecu(s)\|_{\mL^{2}}\ds \bigr]^{2p}
&\leq 
(t-\tau)^p\mE\bigl[\int_\tau^t\|\vecu(s)\|_{\mL^6}^6\ds \bigr]^{p}\\
&\leq 
(t-\tau)^{2p}\mE\bigl[\|\vecu\|_{L^\infty(0,T;\mH^1)}^{6p} \bigr]\\
&\leq 
c(t-\tau)^{2p};\\
\mE\bigl[\sum_{k=1}^\infty 
\int_\tau^t
\|(\vecu(s)\times\vech_k)\times\vech_k\|_{\mL^{2}}\ds\bigr]^{2p}
&\leq 
(t-\tau)^{2p}\bigl(\sum_{k=1}^\infty \|\vech_k\|_{\mL^\infty}^2\bigr)^{2p}\mE\bigl[\|\vecu\|_{L^{\infty}(0,T;\mL^2)}^{2p}\bigr]\\
&\leq 
c(t-\tau)^{2p};\\
\mE\bigl[\sum_{k=1}^\infty 
\|\int_\tau^t
\vecu(s)\times\vech_k\, dW_k(s)\|_{\mL^{2}}\bigr]^{2p}
&\leq 
c\mE\bigl[\sum_{k=1}^\infty 
\int_\tau^t
\|\vecu(s)\times\vech_k\|_{\mL^{2}}^2 \ds\bigr]^{p}\\
&\leq 
c(t-\tau)^p\bigl(\sum_{k=1}^\infty\|\vech_k\|_{\mL^\infty}^2\bigr)^{p}
\mE\bigl[
\|\vecu\|_{L^{\infty}(0,T;\mL^{2})}^{2p}\bigr]\\
&\leq 
c(t-\tau)^p.
\end{align*}
Here we use the Burkholder-Davis-Gundy inequality for the last estimate. These estimates together with~\eqref{eq: Holcont} yield
\begin{equation}\label{eq: ex2}
\mE\|\vecu(t) -\vecu(\tau)\|_{\mL^{3/2}}^{2p}\leq c(t-\tau)^p.
\end{equation}
Noting from~\eqref{eq: ex1} that
\begin{align*}
 \mE\left(\int_\tau^t \|\vecu(s)\times\Delta\vecu(s)\|_{\mL^2}\ds\right)^{2p}
 &\leq 
 \mE\left(\int_\tau^t \|\vecu(s)_{\mH^1}^{1/2}\|\vecu(s)\|_{\mH^2}^{3/2}\ds\right)^{2p}\\
 &\leq 
 (t-\tau)^{p/2}
 \mE\bigl[\|\vecu\|_{L^\infty(0,T;\mH^1)}^{p}\bigl(\int_0^T \|\vecu(s)\|_{\mH^2}^2\ds\bigr)^{3p/2}\bigr]\\
 &\leq c (t-\tau)^{p/2}.
\end{align*}
This together with~\eqref{eq: ex2}
imply that $\vecu$ satisfies~\eqref{eq: mainu2} (thanks to the Kolmogorov continuity test).
\end{proof}
%%%%%%%%%%%%%%%%%%%%%%%%%%%%%%%%%
\section{Existence of an invariant measure for the stochastic LLBE on 1 or 2-dimensional domains}\label{sec: invariant}
In this section we will show the existence of invariant measure for equation \eqref{eq: mainu}. In our proof we modify the ideas from \cite{motyl}, where different type of difficulties had to be dealt with. \\ 
We start with the following result. 
%----------------
\begin{lemma}\label{lem: boundInProba}
Let $u$ be a weak solution to equation \eqref{eq: mainu} with properties listed in Theorem \ref{theo: main}. 
Then there exists a positive constant $c$ depending on $C_1$ and $h$ such that for all $t\ge 0$ we have
\[
\int_0^t\mE\|\vecu(s)\|^2_{\mH^2}\ds \leq c(1 + t).
\]
\end{lemma}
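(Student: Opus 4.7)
The strategy is to re-derive the energy estimates underlying Lemmas~\ref{lem: appSo_sta1} and~\ref{lem: appSo_sta2}, but now keep careful track of the $t$-dependence on the right-hand side so as to obtain bounds of the form $c(1+t)$ uniformly in the Galerkin index $n$. Passing to the limit then transfers the estimate to the weak solution $\vecu$. The output of Theorem~\ref{theo: main} alone is not enough, because the constants appearing there are allowed to depend on $T$.

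\textbf{Step 1 (linear-in-$t$ control of the $\mH^1$-norm).} Starting from \eqref{eq: ito4} for $\vecu_n$, I would take expectation; the stochastic integral is a genuine $L^2$-martingale and therefore drops out. The remaining forcing contains the lower-order piece $c\bigl(\sum_k \|\vech_k\|_{\mL^\infty}^2\bigr)\int_0^t \mE\|\vecu_n(s)\|_{\mL^2}\,ds$, which I would not bound via Gronwall (as in Lemma~\ref{lem: appSo_sta1}) but via Young's inequality in the form $c\,h\,\|\vecu_n\|_{\mL^2}\le \tfrac{(ch)^2}{2\kappa_2}+\tfrac{\kappa_2}{2}\|\vecu_n\|_{\mL^2}^2$. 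The quadratic piece is then absorbed into the $\kappa_2$--dissipation $\kappa_2\int_0^t\mE\int_D(1+\mu|\vecu_n|^2)|\vecu_n|^2\,dx\,ds$ on the left. This yields, uniformly in $n$,
\[
\mE\|\vecu_n(t)\|_{\mL^2}^2+\kappa_1\int_0^t\mE\|\nabla\vecu_n(s)\|_{\mL^2}^2\,ds+\tfrac{\kappa_2}{2}\int_0^t\mE\|\vecu_n(s)\|_{\mL^2}^2\,ds\le c(1+t),
\]
and in particular $\int_0^t\mE\|\vecu_n(s)\|_{\mH^1}^2\,ds\le c(1+t)$.

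\textbf{Step 2 (linear-in-$t$ control of $\Delta\vecu$).} I would take expectation in identity \eqref{eq: BIV0}; again the stochastic integral vanishes, and the residual correction $\sum_k R(\vecu_n(s),\vech_k)$ is controlled by \eqref{eq: BIV01}, giving the pathwise pointwise bound $\sum_k|R(\vecu_n,\vech_k)|\le c\bigl(1+\|\vecu_n\|_{\mH^1}^2\bigr)$. Dropping the non-negative lower-order dissipation terms on the left and using Step~1, this delivers
\[
\kappa_1\int_0^t\mE\|\Delta\vecu_n(s)\|_{\mL^2}^2\,ds\le \tfrac12\|\nabla\vecu_0\|_{\mL^2}^2+c\int_0^t\bigl(1+\mE\|\vecu_n(s)\|_{\mH^1}^2\bigr)\,ds\le c(1+t).
\]

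\textbf{Step 3 (passing to the limit and elliptic regularity).} The law of $\vecu_n$ coincides with that of $\vecu_n'$ from Proposition~\ref{pro: newProcess}, so the same moment bound holds for $\vecu_n'$. Weak lower semicontinuity of the convex functional $\vecv\mapsto\mE\int_0^t\|\Delta\vecv(s)\|_{\mL^2}^2\,ds$ under the weak convergence established in Sections~\ref{sec: tight}--\ref{sec: exist} then transfers the estimate to $\vecu$; Step~1 is transferred analogously. Finally, elliptic regularity for the Neumann Laplacian on the $C^2$-domain $D$ gives $\|\vecu\|_{\mH^2}^2\le c(\|\vecu\|_{\mL^2}^2+\|\Delta\vecu\|_{\mL^2}^2)$, so combining the two bounds yields $\int_0^t\mE\|\vecu(s)\|_{\mH^2}^2\,ds\le c(1+t)$.

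\textbf{Main obstacle.} The delicate point is the Young absorption in Step~1: one must verify that the noise-induced linear-in-$\|\vecu_n\|_{\mL^2}$ growth is strictly dominated by the $\kappa_2$-dissipation, so that a Gronwall argument (which would produce an exponential-in-$t$ factor and be useless for constructing an invariant measure) can be avoided. All other technicalities---the identification of the stochastic integrals as martingales with zero expectation, and the passage to the limit---are routine, following verbatim the Galerkin analysis of Section~\ref{sec: FG} together with the Skorokhod construction of Section~\ref{sec: tight}.
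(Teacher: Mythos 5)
Your proposal is correct, but it takes a genuinely different route from the paper. The paper does not return to the Galerkin scheme at all: it applies an infinite--dimensional It\^o formula (Pardoux's Lemma 1.4, with $V=\mH^1$) directly to the weak solution $\vecu$ of \eqref{eq: mainu}, verifies via the moment bounds of Theorem~\ref{theo: main} that the stochastic integrals are genuine martingales with vanishing expectation, and reads off the linear-in-$t$ bound from the resulting identities (the analogues of \eqref{eq: ito4} and \eqref{eq: BIV0} without the projection $\Pi_n$). A consequence of working at the level of the limit equation is that the ``main obstacle'' you identify does not actually arise there: since $\iprod{\vech_k,\vecu\times\vech_k}_{\mL^2}=0$ pointwise, the Stratonovich correction exactly cancels the It\^o correction and the only non-martingale forcing in the $\mL^2$-identity is the constant $\tfrac12\kappa_1\,t\sum_k\|\vech_k\|^2_{\mL^2}$; the linear-in-$\|\vecu_n\|_{\mL^2}$ term in \eqref{eq: ito4} that you absorb by Young's inequality is an artifact of the projection (through $\iprod{\Pi_n\vech_k,\vecu_n\times\vech_k}_{\mL^2}\neq 0$). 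Your Young-plus-absorption into the $\kappa_2$-dissipation does work, so this is not a gap, merely extra effort. What your route buys is that you never need to justify the It\^o formula for the weak solution itself --- arguably the most delicate point of the paper's argument --- at the cost of two caveats you should make explicit: (i) the constants in your Steps 1--2 must be checked to be independent of $T$ as well as of $n$ (they are, since $\|\vecu_{0,n}\|_{\mH^1}\le\|\vecu_0\|_{\mH^1}\le C_1$ and \eqref{eq: BIV01} is pointwise in $s$); and (ii) weak lower semicontinuity only yields the estimate for the particular solution obtained as the Galerkin limit in Lemma~\ref{lem: un'conve7}, so to cover an arbitrary solution ``with the properties listed in Theorem~\ref{theo: main}'' you must invoke uniqueness in law from Corollary~\ref{co: strongsolution}, which confines your argument to $d=1,2$ --- the setting in which the lemma is actually used, but a restriction the paper's direct proof does not need.
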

%----------------
\begin{proof}
We will use a version of the It\^o Lemma proved in \cite{pardoux}. By Theorem \ref{theo: main} and with  $V=\mathbb H^1$ we easily find that assumptions of Lemma 1.4 in \cite{pardoux} are satsified and therefore  ~\eqref{eq: mainu} yields 
\begin{align}\label{eq: BIV1}
\frac12 d\|\vecu(t)\|^2_{\mL^2} 
&=
\bigl(
\inpro{\vecu(t)}{F(\vecu(t))}_{\mL^2} 
+
\frac12 \sum_{k=1}^\infty \|G_k(\vecu(t))\|^2_{\mL^2}
\bigr)\dt
+
\sum_{k=1}^\infty \inpro{\vecu(t)}{G_k(\vecu(t))}_{\mL^2}\,dW_k(t),
\end{align}
where 
\begin{align*}
 G_k(\vecu)&:= \gamma\vecu\times\vech_k + \kappa_1\vech_k,\nn\\
\text{and}\quad F(\vecu)&:= \kappa_1\Delta\vecu + \gamma\vecu\times\Delta\vecu - 
\kappa_2 (1+\mu|\vecu|^2)\vecu + \frac12 \sum_{k=1}^\infty G_k(\vecu)\times\vech_k.
\end{align*}
Noting
$\inpro{\vecu(t)}{G_{k}(\vecu(t))}_{\mL^2}
=
\kappa_1\inpro{\vecu(t)}{\vech_k}_{\mL^2}$ and
\begin{align*}
 \inpro{\vecu(t)}{F(\vecu(t))}_{\mL^2} 
&=
-\kappa_1\|\nabla \vecu(t)\|^2_{\mL^2}
-\kappa_2\int_D \bigl(1+ \mu|\vecu|^2\bigr)|\vecu|^2\dvx\nn\\
&\quad+\frac12 \sum_{k=1}^\infty \gamma
\inpro{\vecu}{G_{k}(\vecu)\times \vech_k}_{\mL^2}\nn \\
&=
-\kappa_1\|\nabla \vecu(t)\|^2_{\mL^2}
-\kappa_2\|\vecu(t)\|^2_{\mL^2}-\kappa_2\mu\|\vecu(t)\|_{\mL^4}^4\nn\\
&\quad
-\frac12\sum_{k=1}^\infty \|G_{k}(\vecu)\|^2_{\mL^2}
+\frac12\kappa_1 \sum_{k=1}^\infty \|\vech_k\|^2_{\mL^2},
\end{align*}
it follows from~\eqref{eq: BIV1} that 
\begin{align}\label{eq: BIV2}
 \frac12 \|\vecu(t)\|^2_{\mL^2} 
 &+\kappa_1\int_0^t\|\nabla \vecu(s)\|^2_{\mL^2}\ds
+\kappa_2\int_0^t\|\vecu(s)\|^2_{\mL^2}\ds
+\kappa_2\int_0^t\mu\|\vecu(s)\|_{\mL^4}^4\ds\nn\\
 &=
 \frac12 \|\vecu_0\|^2_{\mL^2} 
 +
 \frac12\kappa_1 t \sum_{k=1}^\infty \|\vech_k\|^2_{\mL^2}
 +
 \kappa_1\sum_{k=1}^\infty \int_0^t\inpro{\vecu(s)}{\vech_k}_{\mL^2}\,dW_k(s).
\end{align}
% By using the Burkholder-Davis-Gundy inequality and H\"older inequality, we estimate
% \begin{align*}
% \mE \sup_{s\in[0,t]}
% \bigl|\sum_{k=1}^\infty \int_0^s \inpro{\vecu(\tau)}{\vech_k}_{\mL^2}\,dW_k(\tau)\bigr|
% &\leq
% c\mE \bigl|\sum_{k=1}^\infty \int_0^t (\inpro{\vecu(s)}{\vech_k}_{\mL^2})^2\ds\bigr|^{1/2}\\
% &\leq 
% c\sum_{k=1}^\infty \mE \bigl[\|\vech_k\|_{\mL^2}\bigl|\int_0^t \|\vecu(s)\|_{\mL^2}^2\ds\bigr|^{1/2}\bigl]\\
% &\leq 
% \frac{c^2\kappa_1}{2\kappa_2}\bigl(\sum_{k=1}^\infty \|\vech_k\|_{\mL^2}\bigr)^2 
% + \frac{\kappa_2}{2\kappa_1}\mE\bigl[\int_0^t \|\vecu(s)\|_{\mL^2}^2\ds\bigr],
% \end{align*}
By Theorem \ref{theo: main} we have 
\[
\mE\int_0^t \inpro{\vecu(s)}{\vech_k}_{\mL^2}^2\ds
\leq 
\|\vech_k\|_{\mL^2}^2
\mE\int_0^t\|\vecu(s)\|_{\mL^2}^2\ds
<\infty,
\]
hence the process $\to \int_0^t\inpro{\vecu(s)}{\vech_k}_{\mL^2}\,dW_k(s)$ is a martingale on $[0,T]$ In particular 
\[\mE\int_0^t\inpro{\vecu(s)}{\vech_k}_{\mL^2}\,dW_k(s)=0\,,\]
and invoking ~\eqref{eq: BIV2} we obtain 
% \begin{align}\label{eq: BIV3}
% \frac12\mE \sup_{s\in[0,t]}  \|\vecu(s)\|^2_{\mL^2} 
% &+
% \kappa_1\mE\int_0^t\|\nabla \vecu(s)\|^2_{\mL^2}\ds
% +\frac{\kappa_2}{2}\mE\int_0^t\|\vecu(s)\|^2_{\mL^2}\ds
% +\kappa_2\mE\int_0^t\mu\|\vecu(s)\|_{\mL^4}^4\ds\nn\\
% &\leq 
% \frac12 \|\vecu_0\|^2_{\mL^2} 
% +
% \frac{c^2\kappa_1}{2\kappa_2}\bigl(\sum_{k=1}^\infty \|\vech_k\|_{\mL^2}\bigr)^2 
% +
% \bigl(\frac12\kappa_1  \sum_{k=1}^\infty \|\vech_k\|^2_{\mL^2}\bigr) t.
% \end{align}
\begin{align}\label{eq: BIV3}
\frac12\mE   \|\vecu(t)\|^2_{\mL^2} 
&+
\kappa_1\mE\int_0^t\|\nabla \vecu(s)\|^2_{\mL^2}\ds
+\kappa_2\mE\int_0^t\|\vecu(s)\|^2_{\mL^2}\ds
+\kappa_2\mE\int_0^t\mu\|\vecu(s)\|_{\mL^4}^4\ds\nn\\
&=
\frac12 \|\vecu_0\|^2_{\mL^2} 
+
\frac12\kappa_1 t \sum_{k=1}^\infty \|\vech_k\|^2_{\mL^2}
\leq c + ct.
\end{align}
% By notting that 
% \[
% \mE\sup_{s\in[0,t]}\|\vecu(s)\|^{2}_{\mL^2} 
% - 
% \|\vecu_{0}\|^{2}_{\mL^2}
% \geq 0,
% \]
The inequality~\eqref{eq: BIV3} implies
\begin{equation}\label{eq: BIV4}
\mE\int_0^t\|\vecu(s)\|^2_{\mH^1}\ds
\leq c+ct.
\end{equation}
In a similar fashion as in the proof of~\eqref{eq: BIV0}, we obtain the identity 
\begin{align}\label{eq: BIV5}
\frac12 \|\nabla\vecu(t)\|^2_{\mL^2} 
&+\kappa_1\int_0^t\|\Delta \vecu(s)\|^2_{\mL^2}\ds
+\kappa_2\mu\int_0^t\int_D |\vecu(s,\vecx)|^2|\nabla\vecu(s,\vecx)|^2\dvx\ds\nn\\
&+
\kappa_2\int_0^t\|\nabla \vecu(s)\|^2_{\mL^2}\ds
+2\mu\kappa_2\int_0^t \bigl(\inpro{\vecu(s)}{\nabla\vecu(s)}_{\mL^2}\bigr)^2\ds\nn\\
&=
\frac12 \|\nabla\vecu_{0,}\|^2_{\mL^2} 
+
\sum_{k=1}^\infty \int_0^t
R(\vecu(s),\vech_k)\ds\nn\\
&\quad+
\sum_{k=1}^\infty \int_0^t \inpro{\nabla\vecu(s)}{\gamma\vecu(s)\times\nabla\vech_k+\kappa_1\nabla\vech_k}_{\mL^2}\,dW_k(s),
\end{align}
where $R$ is defined as in~\eqref{eq: defR}. We first estimate $R(\vecu,\vech_k)$ by using H\"older inequality as follows
\begin{align*}
R(\vecu,\vech_k)
&:=
 \frac{\gamma}{2}
\inpro{\nabla\vecu}{G_{k}(\vecu)\times\nabla\vech_k}_{\mL^2}
+
\frac12
\inpro{\gamma\vecu\times\nabla\vech_k + \kappa_1\nabla\vech_k}{\nabla G_{k}(\vecu)}_{\mL^2}\nn\\
&\leq 
\frac{\gamma}{2}\|\nabla\vech_k\|_{\mL^\infty}
\bigl(\|\nabla\vecu\|_{\mL^2}\|G_{k}(\vecu)\|_{\mL^2}
+
\|\vecu(s)\|_{\mL^2}\|\nabla G_{k}(\vecu)\|_{\mL^2}
\bigr)\nn\\
&\quad+
\frac{\kappa_1}{2}
\|\nabla\vech_k\|_{\mL^2}\|\nabla G_{k}(\vecu)\|_{\mL^2}\nn\\
&\leq 
\frac{\kappa_1}{4}\|\nabla\vech_k\|_{\mL^2}^2
+
\frac{\gamma}{4}\|\nabla\vech_k\|_{\mL^\infty}\|\vecu\|_{\mH^1}^2
+
\bigl(\frac{\kappa_1}{4}+\frac{\gamma}{4}\|\nabla\vech_k\|_{\mL^\infty}\bigr) \|G_{k}(\vecu)\|_{\mH^1}^2\nn\\
&\leq 
\bigl(\frac{\kappa_1}{4} + \frac{\kappa_1^3}{2}+\frac{\kappa_1^2\gamma}{2}\|\nabla\vech_k\|_{\mL^\infty} \bigr)\|\vech_k\|_{\mH^1}^2\nn\\
&\quad+
\bigl(\frac{\gamma}{4}\|\nabla\vech_k\|_{\mL^\infty}+ \gamma^2\|\vech_k\|_{\mH^1}^2(\frac{\kappa_1}{2}+\frac{\gamma}{2}\|\nabla\vech_k\|_{\mL^\infty})\bigr)
\|\vecu\|_{\mH^1}^2\nn.
\end{align*}
Hence, 
\begin{align}\label{eq: BIV6}
\sum_{k=1}^\infty 
R(\vecu,\vech_k)
&\leq 
c + c\|\vecu\|_{\mH^1}^2.
\end{align}
Again by Theorem \ref{theo: main} we have
\begin{align*}
 \mE \bigl[\sum_{k=1}^\infty 
&\int_0^t\bigl(
\inpro{\nabla\vecu(s)}{\gamma\vecu(s)\times\nabla\vech_k+\kappa_1\nabla\vech_k}_{\mL^2}\bigr)^2\ds\bigr]\nn\\
&\leq
\mE \bigl[\sum_{k=1}^\infty 
\int_0^t\bigl(
\gamma\|\nabla\vech_k\|_{\mL^\infty}
\|\nabla\vecu(s)\|_{\mL^2}\|\vecu(s)\|_{\mL^2}
+
\kappa_1\|\nabla\vecu(s)\|_{\mL^2}\|\nabla\vech_k\|_{\mL^2}
\bigr)^2\ds\bigr]\nn\\
&\leq
2\gamma^2t\bigl(\sum_{k=1}^\infty \|\nabla\vech_k\|_{\mL^\infty}^2\bigr)\mE\|\vecu\|_{L^{\infty}(0,T;\mH^1)}^4
+
2\kappa_1^2\bigl(\sum_{k=1}^\infty \|\nabla\vech_k\|_{\mL^2}^2\bigr)\mE\int_0^t\|\nabla\vecu(s)\|_{\mL^2}^2\ds
<\infty,
\end{align*}
hence the process 
\[t\to \sum_{k=1}^\infty \int_0^t \inpro{\nabla\vecu(s)}{\gamma\vecu(s)\times\nabla\vech_k+\kappa_1\nabla\vech_k}_{\mL^2}\,dW_k(s)\]
is a martingale on $[0,T]$. In particular, 
\[\mE 
\bigl[\sum_{k=1}^\infty 
\int_0^t \inpro{\nabla\vecu(s)}{\gamma\vecu(s)\times\nabla\vech_k+\kappa_1\nabla\vech_k}_{\mL^2}\,dW_k(s)\bigr] = 0\,,\]
and invoking ~\eqref{eq: BIV5}--\eqref{eq: BIV6} and ~\eqref{eq: BIV4} we obtain
\begin{equation*}
\frac12\mE \|\nabla\vecu(t)\|^2_{\mL^2} 
+\kappa_1\mE\int_0^t\|\Delta \vecu(s)\|^2_{\mL^2}\ds
\leq 
\frac12 \|\nabla\vecu_0\|^2_{\mL^2} 
+c+ct,
\end{equation*}
which implies
\[
\mE
\int_0^t\|\Delta \vecu(s)\|^2_{\mL^2}\ds
\leq c+ct\,.
\]
This completes the proof of this lemma.
\end{proof}

For $T>0$, $p\geq 4$ and $\frac12>\beta >\frac14$ let 
\begin{align*}
 \cZ&:= L^2(0,T;\mH^1)\cap L^p(0,T;\mL^4)\cap C\bigl([0,T];X^{-\beta}\bigr)\cap  C([0,T];\mH^1_w)\\
 &=:\td \cZ \cap  C([0,T];\mH^1_w)
\end{align*}
 where $\mH^1_w$ denotes the space $\mH^1$ endowed with the weak topology of $\mH^1$. We will denote by $\cT$ be the supremum of the corresponding four topologies, i.e. the smallest topology on $\cZ$ such that 
the four natural embedding from $\cZ$ are continuous.
%-------------
\begin{theorem}\label{the: conti_depen}
 Assume that  an $\mH^1$-valued sequence $\{\vecu_{0,l}\}_{l\in\N}$ is convergent weakly in $\mH^1$ to $\vecu_0\in \mH^1$. Let $C_1>0$ be such that 
 $\sup_{l\in\N}\|\vecu_{0,l}\|_{\mH^1}\leq C_1$. Let $(\Omega,\cF,\mF,\mP,\vecu^l,W)$ be a unique solution of~\eqref{eq: sLLB2} with the initial 
 data $\vecu_{0,l}$. Then there exist
 \begin{itemize}
  \item a subsequence $\{l_k\}_k$,
  \item a stochastic basis $(\td\Omega,\td\cF,\td\mF,\td\mP)$,
  \item a standard $\td\mF$-Wiener process
$\td W=(\td W_j)_{j=1}^\infty $ defined on this basis,
\item progressively measurable processes $\td \vecu$, $\{\td \vecu_{l_k}\}_{k\in\N}$ (defined on this basis) with laws supported in $(\cZ,\cT)$
such that 
\begin{align*}
 &\td\vecu_{l_k}\text{ has the same law as }\vecu_{l_k}\text{ on }\cZ\\
 &\td\vecu_{l_k}\goto\td\vecu\text{ in }\cZ\text{ as }k\goto\infty, \,\td\mP-a.s.,\\
 &\text{and } \td\vecu \text{ is a solution of sLLB equation with the initial data } \vecu_0.
\end{align*}
 \end{itemize}
\end{theorem}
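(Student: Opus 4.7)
The plan is to mimic the Galerkin-based existence proof carried out in Sections~\ref{sec: FG}--\ref{sec: exist}, but with the Galerkin index $n$ replaced by the initial-data index $l$. Since $\sup_l\|\vecu_{0,l}\|_{\mH^1}\leq C_1<\infty$, the bounds in Lemmas~\ref{lem: appSo_sta1}--\ref{lem: appSo_sta4} together with Theorem~\ref{theo: main} (which were already stated as depending only on $C_1$ and $h$) transfer verbatim to the sequence $\{\vecu^l\}$. First I would record the following uniform estimates: for every $p\in[1,\infty)$, $r\in[1,\tfrac43)$ and $\alpha\in(0,\tfrac12)$ with $p\alpha>1$,
\[
\sup_{l}\mE\|\vecu^l\|_{L^\infty(0,T;\mH^1)\cap L^2(0,T;\mH^2)}^{2p}+\sup_{l}\mE\Big(\int_0^T\|\vecu^l\times\Delta\vecu^l\|_{\mL^2}^{r}\,ds\Big)^p+\sup_{l}\mE\|\vecu^l\|_{W^{\alpha,p}(0,T;\mL^2)}^{p}<\infty.
\]

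The second step is to prove tightness of $\{\cL(\vecu^l)\}_{l}$ on $(\cZ,\cT)$. Tightness on $\td\cZ=L^2(0,T;\mH^1)\cap L^p(0,T;\mL^4)\cap C([0,T];X^{-\beta})$ follows word-for-word from Lemma~\ref{lem: tight} using the compact embeddings \eqref{eq:comp1}--\eqref{eq:comp2} and the uniform bounds above. Tightness in $C([0,T];\mH^1_w)$ is handled by the classical criterion: balls in $\mH^1$ are compact and metrizable in the weak topology, and the uniform bound in $L^\infty(0,T;\mH^1)$ together with the equicontinuity in $X^{-\beta}$ (derived from the decomposition \eqref{eq_bn} applied to $\vecu^l$) gives tightness on $C([0,T];\mB^1_w(R))$ for suitable $R=R(\eps)$. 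Combining the four tightnesses and invoking Jakubowski's version of the Skorokhod representation theorem (applicable to $(\cZ,\cT)$, which is a Lusin/sub-Polish space since $C([0,T];\mH^1_w)$ is not metrizable but $(\cZ,\cT)$ admits a countable family of continuous $[0,1]$-valued functions separating points) yields a new probability space $(\td\Omega,\td\cF,\td\mP)$, processes $\td\vecu_{l_k}$, $\td\vecu$ and Wiener processes $\td W_{l_k},\td W$ with $\cL(\td\vecu_{l_k},\td W_{l_k})=\cL(\vecu^{l_k},W)$ and $\td\vecu_{l_k}\to\td\vecu$ in $(\cZ,\cT)$ $\td\mP$-a.s., as well as $\td W_{l_k}\to\td W$ in $C([0,T];\R^\infty)$.

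The third step is to identify $\td\vecu$ as a weak martingale solution with initial datum $\vecu_0$. The almost-sure convergence in $C([0,T];X^{-\beta})$ together with the weak convergence $\vecu_{0,l}\rightharpoonup\vecu_0$ in $\mH^1$ gives $\td\vecu(0)=\vecu_0$ $\td\mP$-a.s. The Wiener-process property of $\td W$ and its filtration adaptedness follow as in Lemma~\ref{lem: Brownian1}. Passage to the limit in the deterministic terms of \eqref{eq: mainu} uses: strong convergence in $L^2(0,T;\mH^1)$ for $\Delta\vecu$ (via weak convergence in $L^2(0,T;\mH^2)$ paired with the identification Lemma~\ref{lem: u'conve5}); strong convergence in $L^p(0,T;\mL^4)$ for the cubic term, exactly as in Lemma~\ref{lem: u'conve4}; and the weak identification $\td\vecu_{l_k}\times\Delta\td\vecu_{l_k}\rightharpoonup\td\vecu\times\Delta\td\vecu$ using \eqref{xxx} and the computation \eqref{no: 2}. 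The stochastic integral is handled by the argument of Lemma~\ref{lem: newProConver2} (based on \cite{BrzGolJer12}).

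The main obstacle, in my view, is the simultaneous use of Skorokhod representation on a non-metric topological space that includes the weak-continuity component $C([0,T];\mH^1_w)$; this requires invoking Jakubowski's theorem and verifying its hypotheses (existence of a countable separating family of continuous real functions on $(\cZ,\cT)$), which, although standard, must be done carefully. Everything else is a routine repetition of the arguments in Sections~\ref{sec: tight}--\ref{sec: exist} with the extra input that $\vecu_{0,l}\rightharpoonup\vecu_0$ in $\mH^1$ implies $\Pi\vecu_{0,l}\to\vecu_0$ strongly in $X^{-\beta}$, which transfers the initial condition in the limit.
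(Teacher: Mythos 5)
Your proposal follows essentially the same route as the paper: uniform bounds depending only on $C_1$ and $h$, tightness/compactness of the laws in $(\cZ,\cT)$ including the non-metrizable $C([0,T];\mH^1_w)$ component (the paper handles this via the Strauss embedding $L^\infty(0,T;\mH^1)\cap C([0,T];X^{-\beta}_w)\subset C([0,T];\mH^1_w)$, which is the same mechanism as your "classical criterion"), Jakubowski's Skorokhod representation, and identification of the limit by repeating the arguments of Sections~\ref{sec: tight}--\ref{sec: exist}. If anything, you spell out the limit-identification and initial-condition steps more explicitly than the paper, which stops after verifying the hypotheses of Theorem~\ref{theo:Jakubowski}.
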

%-------------
\begin{proof}
\underline{Step 1.} From Theorem~\ref{theo: pathwise} and Corollary~\ref{co: strongsolution},  given the inital data $\vecu_{0,l}\in\mH^1$ 
 there exists a unique solution $\vecu^l$ to equation~\eqref{eq: sLLB2} 
defined on the stochastic basis $(\Omega,\cF,\mF,\mP,W)$. Since $C([0,T];\mH^1_w)$ is a non-metric space, we use the Jakubowski's  
version of the Skorokhod theorem proved in \cite{jakubowski}, see also Theorem \ref{theo:Jakubowski} in the Appendix. 
%\[f_{m,t}(\vecv):=\inpro{\vecv(t)}{\vecg_m}_{\mH^1}\in\R,
 %\quad \vecv\in C([0,T];\mH^1_w),\quad m\in\N,\,t\in Q_T\]}
%consists of continuous functions separating points in $C([0,T];\mH^1_w)$.
\\
\underline{Step 2.}
We  show that the sequence $\{\vecu^l\}$ of $\cZ$-valued Borel random variables defined on $(\Omega^l,\cF^l,\mF^l,\mP^l)$ satisfies the 
condition of Theorem~\ref{theo:Jakubowski}. 

Let 
\[\mathcal Y(\beta)=W^{\alpha,p}(0,T;X^{-\beta})\cap L^\infty(0,T;\mH^1)\cap L^2(0,T;\mH^2)\]
denote a Banach space endowed with the norm 
\[\|\vecu\|_{\mathcal Y(\beta)}=\|\vecu\|_{W^{\alpha,p}(0,T;X^{-\beta})}+\|\vecu\|_{L^\infty(0,T;\mH^1)}+\|\vecu\|_{L^2(0,T;\mH^2)}\,.\]
By noting that $\{\vecu_{0,l}\}_l$ is uniformly bounded in $\mH^1$ and 
using~\eqref{eq_est1}--\eqref{eq_est2},  
% the same arguments as in Lemmas~\ref{lem: appSo_sta1}--\ref{lem: appSo_sta2} and Lemma~\ref{lem: unbound1}, 
we deduce that  for $\alpha\in[0,\frac12)$, $p\geq 4$ and for all $l=1,2,\cdots$, 
\begin{align*}
 &\vecu^l\in L^{2p}\bigl(\Omega; C([0,T];\mH^1_w)\bigr)\\
 \quad \text{and}\quad
 &\mE^l \|\vecu^l\|_{\mathcal Y(\beta)} \leq c,
\end{align*}
where $c$ is a positive constant  only depending on $C_1$, $p\geq  1$ and $h$.
Let 
\[
\cB_R(\beta):=\{\vecv\in\mathcal Y(\beta)
: \|\vecv\|_{\mathcal Y}\leq R\}. 
\]
By the Chebyshev inequality and the above uniform bound of $\{\vecu^l\}$, we infer that 
\begin{equation}\label{eq: cd1}
 \sup_{l\in\N}\mP^l\bigl(\{\vecu^l\in\cB_R(\beta)\}\bigr)\geq 1-\frac{c}{R^2}
\end{equation}
The following compact embedding
\begin{equation}\label{eq: com_em}
 W^{\alpha,p}(0,T;X^{-\beta_1})\cap L^p(0,T;\mH^1)\cap L^2(0,T;\mH^2)
 \hookrightarrow \td\cZ,
\end{equation}
holds for $\beta_1\in(0,\beta)$. Therefore, 
\begin{equation}\label{eq: cd2}
 \cB_R\left(\beta_1\right)\text{  is a compact subset in }\td\cZ. 
\end{equation}
By Theorem 2.1 in~\cite{Strauss1966} we have for any $\beta\ge 0$ a continuous imbedding\footnote{In fact, the continuous imbedding is not explicitly stated in Theorem 2.1 but in our case it can be easily deduced from the proof.}.
\[
  L^\infty(0,T;\mH^1)\cap C([0,T];X^{-\beta}_w)\subset C([0,T];\mH^1_w)\,
\]
 As a consequence we find that for a certain $r>0$ we have $\cB_R\left(\beta_1\right)\subset \td\cZ\cap C([0,T];\mB^1_w(r))$ where $C([0,T];\mB^1_w(r))$ is the metric subspace in $C([0,T];\mH^1_w)$ and $\mB^1_w(r)$ was defined on p.4. 
Let $\{\vecv_n\}$ be a sequence in $\cB_R\left(\beta_1\right)$. Then $\{\vecv_n\}$ is uniformly bounded in 
$W^{\alpha,p}(0,T;X^{-\beta})\cap L^\infty(0,T;\mH^1)\cap L^2(0,T;\mH^2)$. It follows from~\eqref{eq: com_em}--\eqref{eq: cd2} that 
there exist a subsequence of $\{\vecv_n\}$ (still denoted by $\{\vecv_n\}$) and $\vecv\in \cB_R\left(\beta_1\right)$ satisfying
\[
 \vecv_n\goto \vecv\quad \text{ in }\td\cZ,\quad\text{and}\quad  \vecv_n\goto \vecv\quad \text{weak$^\star$\,\,in\,\,}L^\infty(0,T;\mH^1),
\]
which implies
\[
 \lim_{n\goto\infty}\sup_{s\in[0,T]}|\inpro{\vecv_n(s)-\vecv(s)}{\vech}_{\mH^1}|=0\quad\forall \,\vech\in\mH^1.
\]
Therefore, $\vecv_n\goto \vecv$ in $\td\cZ\cap C([0,T];\mB^1_w(r))$. This together with~\eqref{eq: cd2} implies 
\begin{equation}\label{eq: cd3}
 \cB_R\left(\beta_1\right)\text{  is a compact subset in }\cZ. 
\end{equation}

Now, taking into account ~\eqref{eq: cd1},~\eqref{eq: cd3}, the proof of the theorem follows from Theorem~\ref{theo:Jakubowski}.
% In order to prove  that $\cB_R$ is a compact subset in $\cZ$, we show that $\cB_R$ is a compact subset in $\td\cZ\cap C([0,T];\mB^1_w)$. 
% Indeed, from~\eqref{eq: cd2}, any convergent sequence $\{\vecv_n\}\subset \cB_R$ to $\vecv\in \cB_R$ in $\td\cZ$ implies
% \[
%  \vecv_n\goto\vecv\quad\text{ in } C([0,T];X^{-\beta}).
% \]
% Hence, noting from [...] that
% \[
%   L^\infty(0,T;\mH^1)\cap C([0,T];X^{-\beta}_w)\subset  C([0,T];\mH^1_w),
% \]
% we deduce $\{\vecv_n\},\vecv\in C([0,T];\mH^1_w)$. Furthermore, since $\{\vecv_n\}\subset \cB_R$

% and
% \[
% \vecw^l(\cdot)\in C^{\alpha}([0,T];\mL^{\frac32}),\, \mP^j-a.s.
% \]
% the set of Borel measures $\{\cL(\vecu^l)\}_{l\in\N}$ is tight on the space $(\cZ,\cT)$.
%  Since the product topological space $\cZ\timesC([0,T];\R)$ satisfies the assumptions of 
\end{proof}

%-------------
\noindent
Let us recall that by Corollary \ref{co: strongsolution} equation \eqref{eq: mainu} has a unique weak solution that, in view of Theorem \ref{theo: main}, defines a $\mH^1$-valued Markov process $\vecu$. Therefore, we can define its transition semigroup: for any $\phi\in B_b(\mH^1)$, i.e. a bounded and Borel function $\phi:\mH^1\to\R$ we define 
\begin{equation}\label{eq: defPt}
 P_t\phi(\vecu_0) := \mE[\phi(\vecu(t;\vecu_0))], \quad\forall \vecu_0\in\mH^1,
\end{equation}
 where $\vecu(t,\vecu_0)$ stands for the process $\vecu$ starting at time $t=0$ at $\vecu(0)=\vecu_0$.
% Let us choose an auxiliary time $T\in (t,\infty)$ and let the stochastic basis $(\Omega,\cF,\mF,\mP,W)$ 
% define the process  $\vecw^l_t:=\vecu(t;\vecu_{0,l})$. 
%We first note that the difficulty of proving the Feller property of $P_t$ follows  from the complexity of term
%$\vecv\times\Delta\vecu_1$ in equation~\eqref{eq: uni1}. 
\noindent
The next result states the sequentially weak Feller property of $P_t$. 
%-----------------
\begin{lemma}\label{lem: Feller}
Let $\phi:\mH^1\goto \R$ be a bounded and sequentially weakly continuous function and let 
 $\vecu_{0,l}\goto\vecu_0$ weakly in $\mH^1$ as $l\goto\infty$. Then for every $t\ge 0$ 
\[
P_t\phi(\vecu_{0,l})\goto P_t\phi(\vecu_0)\quad
\text{as }l\goto\infty\,.
\]
\end{lemma}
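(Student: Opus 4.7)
The plan is to use the stability result of Theorem \ref{the: conti_depen} combined with a standard subsequence argument that turns a.s.\ convergence into convergence of expectations. Fix $t\ge 0$ and let $\vecu_{0,l}\to \vecu_0$ weakly in $\mH^1$. Since every weakly convergent sequence in $\mH^1$ is bounded, we may apply Theorem \ref{the: conti_depen} to the corresponding solutions $\vecu^{l}$ of \eqref{eq: sLLB2}. This yields a subsequence $\{l_k\}$, a new stochastic basis $(\tilde\Omega,\tilde\cF,\tilde\mF,\tilde\mP)$ with a Wiener process $\tilde W$, and progressively measurable processes $\tilde\vecu_{l_k}$, $\tilde\vecu$ such that $\tilde\vecu_{l_k}$ has the same law as $\vecu^{l_k}$ on $\cZ$, $\tilde\vecu_{l_k}\to \tilde\vecu$ in $\cZ$ $\tilde\mP$-a.s., and $\tilde\vecu$ solves the stochastic LLB equation with initial data $\vecu_0$.

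Since the topology $\cT$ on $\cZ$ includes $C([0,T];\mH^1_w)$, the a.s.\ convergence $\tilde\vecu_{l_k}\to\tilde\vecu$ in $\cZ$ implies in particular that $\tilde\vecu_{l_k}(t)\to\tilde\vecu(t)$ weakly in $\mH^1$, $\tilde\mP$-almost surely. Using the assumed sequential weak continuity of $\phi$, we obtain $\phi(\tilde\vecu_{l_k}(t))\to\phi(\tilde\vecu(t))$ $\tilde\mP$-a.s. Boundedness of $\phi$ and the dominated convergence theorem then give
\[
\tilde\mE\bigl[\phi(\tilde\vecu_{l_k}(t))\bigr]\;\longrightarrow\;\tilde\mE\bigl[\phi(\tilde\vecu(t))\bigr]
\qquad\text{as }k\to\infty.
\]

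By the equality of laws on $\cZ$ (and in particular on $\mH^1_w$ at time $t$) we have $\tilde\mE[\phi(\tilde\vecu_{l_k}(t))]=\mE[\phi(\vecu^{l_k}(t))]=P_t\phi(\vecu_{0,l_k})$. Moreover, because we are working in dimension $d=1$ or $d=2$, Corollary \ref{co: strongsolution} gives uniqueness in law, so $\tilde\vecu$ has the same law as $\vecu(\cdot;\vecu_0)$ and hence $\tilde\mE[\phi(\tilde\vecu(t))]=P_t\phi(\vecu_0)$. Thus $P_t\phi(\vecu_{0,l_k})\to P_t\phi(\vecu_0)$ along the chosen subsequence.

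Finally, to promote this to convergence of the whole sequence, I apply the standard subsequence principle: the above argument can be run starting from any subsequence of $\{\vecu_{0,l}\}$, which still converges weakly in $\mH^1$ to $\vecu_0$, and therefore admits a further subsequence along which $P_t\phi(\vecu_{0,l})\to P_t\phi(\vecu_0)$. Since the limit is independent of the subsequence extracted, the full sequence $\{P_t\phi(\vecu_{0,l})\}$ converges to $P_t\phi(\vecu_0)$. The main technical obstacle is really absorbed by Theorem \ref{the: conti_depen}: everything else is a routine combination of dominated convergence, equality in law, and the pathwise uniqueness giving uniqueness in law in low dimensions. The only place one has to be slightly careful is to verify that the weak continuity at the single time $t$ indeed follows from convergence in $C([0,T];\mH^1_w)$, which is exactly the role played by including this space in the definition of $\cZ$.
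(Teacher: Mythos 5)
Your proof is correct and follows essentially the same route as the paper: apply Theorem \ref{the: conti_depen}, use the a.s.\ convergence in $C([0,T];\mH^1_w)$ together with sequential weak continuity of $\phi$ and dominated convergence, then transfer back via equality of laws. You are in fact slightly more careful than the paper on two points it leaves implicit --- invoking uniqueness in law from Corollary \ref{co: strongsolution} to identify $\tilde\mE[\phi(\tilde\vecu(t))]$ with $P_t\phi(\vecu_0)$, and using the subsequence principle to upgrade convergence along the extracted subsequence to convergence of the full sequence.
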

%-----------------
\begin{proof}
Assume that $\vecu_{0,l}\goto\vecu_0$ weakly in $\mH^1$ as $l\goto\infty$. 
By Theorem~\ref{the: conti_depen}, there exist a subsequence of $\vecu_{l}$ (still denoted by $\vecu_{l}$), 
a stochastic basis $(\td\Omega,\td\cF,\td\mF,\td\mP)$,
an $\R^\infty $-valued standard $\td\mF$-Wiener process
$\td W=(\td W_j)_{j=1}^\infty $ defined on this basis, progressively measurable processes $\td \vecu$ and $\{\td \vecu_{l}\}_{l\in\N}$ (defined on this basis) 
with laws supported in $(\cZ,\cT)$ such that 
\begin{equation}\label{eq_new1}
 \td\vecu_{l}\text{ has the same law as }\vecu_{l}\text{ on }\cZ
 \end{equation}
 and 
\[\td\vecu_{l}\goto\td\vecu\text{ in }\cZ\text{ as }l\goto\infty,\quad\td\mP-a.s.\]
% the solution of~\eqref{eq: sLLB2} is unique in law, we infer that 
% the processses $\vecu(t;\vecu_0)$ and $\hat\vecw(t)$ have the same law on $\bigl(L^p(0,T;\mL^4)\cap C([0,T];X^{-\beta})\cap L^2(0,T;\mH^1)\bigr)$.
Hence, 
\begin{equation}\label{eq: Feller1}
 \td\mE[\phi(\td\vecu(t))] = \mE[\phi(\vecu(t;\vecu_0))] =: P_t\phi(\vecu_0),
\end{equation}
and $\td\vecu^l\goto\td\vecu$  in $C([0,T;\mH^1_w)$, $\td\mP$-a.s.
% we deduce
% \[
% \hat\vecw^l(t)\goto  \hat\vecw(t)\quad\text{weakly in } \mH^1.
% \]
This together with the sequential weak continuity of $\phi$ implies  
\[
\phi(\td\vecu_l(t))\goto  \phi(\td\vecu(t))\quad\text{in }\R.
\]
Therefore, since the function $\phi$ is bounded, by the Lebesgue Dominated Convergence Theorem we infer that 
\begin{equation}\label{eq: Feller2}
 \lim_{l\goto\infty}\td\mE[\phi(\td\vecu_l(t))] = \td\mE[\phi(\td\vecu(t))].
\end{equation}
Note that equality of laws \eqref{eq_new1} yields equality of laws of $\td\vecu_l(t)$ and $\vecu_l(t)$ for every $t\ge 0$. Thus by~\eqref{eq: Feller1}--\eqref{eq: Feller2} we obtain
\[
\lim_{l\goto\infty} P_t\phi(\vecu_{0,l}) = \lim_{l\goto\infty} \mE[\phi(\vecu_l)]
=
\lim_{l\goto\infty} \td\mE[\phi(\td\vecu_l(t))] = \td\mE[\phi(\td\vecu(t))]=P_t\phi(\vecu_0),
\]
and the lemma follows.
\end{proof}
%-----------------
\begin{theorem}\label{theo: invariantmeasure}
Let $D\subset \R$ or $D\subset \R^2$. Then there exists at least one invariant measure for equation~\eqref{eq: sLLB2}.
\end{theorem}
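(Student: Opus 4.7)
The strategy is the Krylov--Bogoliubov averaging procedure adapted to the sequentially weakly Feller setting, in the spirit of \cite{motyl} and Maslowski--Seidler. Fix any $\vecu_0\in\mH^1$ with $\|\vecu_0\|_{\mH^1}<C_1$ (for instance $\vecu_0=0$), and let $\vecu(\cdot;\vecu_0)$ be the pathwise unique solution provided by Corollary~\ref{co: strongsolution}. For $T\geq 1$ define the Ces\`aro-averaged laws
\[
\mu_T(A) := \frac{1}{T}\int_0^T \mP\bigl(\vecu(t;\vecu_0)\in A\bigr)\,dt,\qquad A\in\cB(\mH^1).
\]
The goal is to produce a subsequential limit of $\{\mu_T\}$ and show, via the Feller property of Lemma~\ref{lem: Feller}, that the limit is invariant.

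The first key step is tightness of $\{\mu_T\}_{T\geq 1}$ on $\mH^1$ equipped with its norm topology. By Lemma~\ref{lem: boundInProba},
\[
\frac{1}{T}\int_0^T \mE\|\vecu(s;\vecu_0)\|^2_{\mH^2}\,ds \;\leq\; \frac{c(1+T)}{T}\;\leq\; 2c \quad\text{for }T\geq 1,
\]
so Chebyshev's inequality combined with Fubini yields
\[
\mu_T\bigl(\{\vecv\in\mH^1 : \|\vecv\|_{\mH^2}\leq R\}\bigr)\;\geq\; 1-\frac{2c}{R^2}.
\]
Since $D$ is bounded with $C^2$ boundary, the Rellich--Kondrachov embedding $\mH^2\hookrightarrow\mH^1$ is compact, so the balls $K_R:=\{\vecv:\|\vecv\|_{\mH^2}\leq R\}$ are relatively compact in $(\mH^1,\|\cdot\|_{\mH^1})$. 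This gives tightness of $\{\mu_T\}$, and Prokhorov's theorem produces a subsequence $T_k\to\infty$ and a Borel probability measure $\mu$ on $\mH^1$ with $\mu_{T_k}\to\mu$ narrowly.

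To conclude that $\mu$ is invariant, I would test against an arbitrary bounded sequentially weakly continuous $\phi:\mH^1\to\R$. The standard Markov identity
\[
\int_{\mH^1} P_s\phi\,d\mu_T \;-\; \int_{\mH^1}\phi\,d\mu_T \;=\; \frac{1}{T}\int_T^{T+s} P_t\phi(\vecu_0)\,dt \;-\; \frac{1}{T}\int_0^s P_t\phi(\vecu_0)\,dt
\]
has right-hand side bounded by $2s\|\phi\|_\infty/T\to 0$. By Lemma~\ref{lem: Feller}, $P_s\phi$ is itself bounded and sequentially weakly continuous. Passing to the limit along $T_k$ then gives $\int P_s\phi\,d\mu=\int\phi\,d\mu$ for every bounded sequentially weakly continuous $\phi$, which by a monotone class argument extends to all Borel bounded $\phi$, so $P_s^*\mu=\mu$.

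The main technical obstacle is precisely the passage $\int P_s\phi\,d\mu_{T_k}\to\int P_s\phi\,d\mu$, because narrow convergence of probability measures on $\mH^1$ a priori only guarantees convergence of integrals against strongly continuous test functions, whereas $P_s\phi$ is only \emph{sequentially weakly} continuous. The resolution exploits the tightness estimate above: for any $\varepsilon>0$ one can choose $R$ so large that $\mu_{T_k}(K_R^c)+\mu(K_R^c)<\varepsilon$ uniformly in $k$, and on the compact set $K_R$ the strong and weak topologies of $\mH^1$ coincide, so the restriction $P_s\phi|_{K_R}$ is norm continuous. A Tietze-type extension to a bounded norm-continuous function on $\mH^1$, together with the uniform bound $\|P_s\phi\|_\infty\leq\|\phi\|_\infty$, delivers the required integral convergence and completes the proof.
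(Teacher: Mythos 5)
Your proof is correct, but it takes a genuinely different route from the paper. The paper does not run the Krylov--Bogoliubov construction by hand: it simply verifies the two hypotheses of the Maslowski--Seidler theorem (stated as Theorem~\ref{theo: theorem11.7} in the Appendix), namely the sequentially weak Feller property from Lemma~\ref{lem: Feller} and the averaged bound $\frac{1}{T}\int_0^{T}\mP(\|\vecu(s;\vecu_0)\|_{\mH^1}>R)\,ds \leq \frac{c+cT}{TR^2}$, which it obtains from Chebyshev and only the $\mH^1$-part of Lemma~\ref{lem: boundInProba}. That framework requires no compactness at all: closed balls of $\mH^1$ are merely weakly compact, which is exactly why the \emph{sequentially weak} Feller property is the right hypothesis there. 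You instead exploit the stronger $\mH^2$-estimate of Lemma~\ref{lem: boundInProba} together with the Rellich--Kondrachov embedding $\mH^2\hookrightarrow\mH^1$ to get genuine norm-tightness of the Ces\`aro averages $\mu_T$ on the Polish space $(\mH^1,\|\cdot\|_{\mH^1})$, and then close the argument with Prokhorov, the Markov identity, and a monotone class step. This buys a self-contained proof (no external invariant-measure theorem) and, as a byproduct, an invariant measure concentrated on $\mH^2$-balls; the cost is that you must carry the Markov/semigroup identity and the limit passage explicitly. One remark: the ``main technical obstacle'' you identify is not actually an obstacle in your setting. Any sequentially weakly continuous function on $\mH^1$ is automatically norm-continuous (norm convergence implies weak convergence), so $P_s\phi$ is a bounded norm-continuous test function and narrow convergence $\mu_{T_k}\to\mu$ gives $\int P_s\phi\,d\mu_{T_k}\to\int P_s\phi\,d\mu$ directly; the restriction-to-$K_R$ and Tietze-extension detour is valid but superfluous here (it would be needed only in the Maslowski--Seidler setting, where the limit is taken in the weak topology). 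With that simplification your argument is complete and consistent with the paper's Lemmas~\ref{lem: boundInProba} and~\ref{lem: Feller}.
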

\begin{proof}
%  for any $\epsilon >0$ there exists $R>0$ such that
%  \begin{equation}\label{eq: Mas1}
%   \sup_{T\geq 1}\frac{1}{T}\int_0^T \mP\bigl(\|\vecu(t;\vecu_0)\|_{\mH^1}>R\bigr)\dt <\epsilon.
%  \end{equation}
Lemma~\eqref{lem: Feller} implies that the semigroup $\{P_t\}_{t\geq 0}$ is sequentially weakly Feller in $\mH^1$. 
Using the Chebyshev inequality and  Lemma~\ref{lem: boundInProba}, we infer that for every $T>0$ and $R>0$
\begin{align*}
\frac{1}{T}\int_0^{T}\mP(\{\|\vecu(s;\vecu_0)\|_{\mH^1}>R\})\ds
\leq 
\frac{1}{TR^2}\int_0^{T}\mE[\|\vecu(s;\vecu_0)\|_{\mH^1}^2]\ds
\leq 
\frac{c+cT}{TR^2},
\end{align*}
where $c$ is the constant only depending on $\vecu_0$ and $h$.
% The tightness of sequence
% \[
%   \bigg\{\frac{1}{T_n}\int_0^{T_n}\cL(\vecu(t;\vecu_0))\dt\bigg\}
%  \]
% on $\mH^1$ is imediately followed from Lemma~\ref{lem: boundInProba} and the compact embedding $\mH^2\hookrightarrow\mH^1$. 
Hence, thanks to the Maslowski-Seidler theorem, see \cite{Maslowski1999} or Theorem~\ref{theo: theorem11.7},
we infer that there exists at least one invariant measure for equation~\eqref{eq: sLLB2}.
\end{proof}
%%%%%%%%%%%%%%%%%%%%%%%%%%%%%%%%

%-----------------------------------------------------------------------------
\section{Appendix}\label{sec: appe}
\begin{lemma}\label{lem: appendix1}
Assume that $E$ is a separable Hilbert space, $p\in[2,\infty)$ and $\alpha\in (0,\frac12)$. 
Then there exists a constant c depending on $T$ and $\alpha$ such that for any progressively measurable 
process $\xi = (\xi_j)_{j=1}^{\infty}$ there holds
\begin{equation*}
\mE\| \sum_{j=1}^{\infty}I(\xi_j)\|^p_{W^{\alpha,p}(0,T;E)}
\leq 
c\mE\int_0^T\bigl(\sum_{j=1}^{\infty}|\xi_j(t)|_E^2\bigr)^{\frac{p}{2}}\dt,
\end{equation*}
where $I(\xi_j)$ is defined by 
\begin{equation*}
I(\xi_j) := \int_0^t\xi_j(s)\,dW_j(s),\quad t\geq 0.
\end{equation*}
In particular, $\mP$--a.s. the trajectories of the process $I(\xi_j)$ belong to $W^{\alpha,2}(0,T;E)$.
\end{lemma}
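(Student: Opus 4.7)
The plan is to work directly with the Sobolev--Slobodeckij seminorm
\[
\|M\|_{W^{\alpha,p}(0,T;E)}^p = \int_0^T \|M(t)\|_E^p\,dt + \int_0^T\!\!\int_0^T \frac{\|M(t)-M(s)\|_E^p}{|t-s|^{1+\alpha p}}\,ds\,dt,
\]
applied to $M(t):=\sum_{j\ge 1} I(\xi_j)(t)=\sum_{j\ge 1}\int_0^t\xi_j(r)\,dW_j(r)$. Since the $W_j$ are independent, for $s<t$ the increment is the stochastic integral $M(t)-M(s)=\sum_j\int_s^t \xi_j(r)\,dW_j(r)$ with quadratic variation $\int_s^t\sum_j|\xi_j(r)|_E^2\,dr$.

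First I would apply the Burkholder--Davis--Gundy inequality in the Hilbert space $E$ to obtain, for every $0\le s\le t\le T$,
\[
\mE\|M(t)-M(s)\|_E^p \le c_p\,\mE\left(\int_s^t \sum_{j\ge 1}|\xi_j(r)|_E^2\,dr\right)^{p/2},
\]
and then, using $p\ge 2$ and H\"older's inequality on the interval $[s,t]$ of length $|t-s|$, bound the right-hand side by
\[
c_p\,|t-s|^{p/2-1}\,\mE\int_s^t\Phi(r)\,dr,\qquad \Phi(r):=\left(\sum_{j\ge 1}|\xi_j(r)|_E^2\right)^{p/2}.
\]

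Next I would insert this estimate into the double integral defining the $W^{\alpha,p}$ seminorm and swap the order of integration by Fubini. After this swap, for each fixed $r\in[0,T]$ the remaining integrand is $|t-s|^{p/2-2-\alpha p}$ over the set $\{(s,t)\in[0,T]^2 : s\wedge t\le r\le s\vee t\}$. A routine computation shows that this integral is bounded by a finite constant depending only on $p$, $\alpha$ and $T$, precisely under the assumption $\alpha<\tfrac12$ which guarantees $p/2-2-\alpha p>-2$ and hence integrability of the singular kernel near the diagonal. The outer factor $\mE\int_0^T\Phi(r)\,dr$ then gives the stated bound on the double-integral contribution.

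Finally, the $L^p(0,T;E)$ piece of the norm is handled by the same two ingredients applied to $M(t)$ itself: BDG followed by H\"older on $[0,t]\subset[0,T]$ yields $\mE\|M(t)\|_E^p\le c_p\,T^{p/2-1}\mE\int_0^T\Phi(r)\,dr$, and integrating in $t$ produces the same right-hand side as before. Combining the two estimates proves the inequality, and the $\mP$-a.s.\ regularity of trajectories follows by specialising to $p=2$ and invoking Fubini to conclude $\|I(\xi_j)\|_{W^{\alpha,2}(0,T;E)}<\infty$ almost surely. The only genuinely delicate step is the Fubini swap together with verifying that $\alpha<\tfrac12$ is exactly what is needed for the kernel $|t-s|^{p/2-2-\alpha p}$ to remain integrable; everything else is a direct application of BDG and H\"older.
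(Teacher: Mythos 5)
Your proof is correct: the Burkholder--Davis--Gundy bound on increments, the H\"older step producing the factor $|t-s|^{p/2-1}$, and the Fubini swap reducing everything to the uniform boundedness in $r$ of $\iint_{\{s\wedge t\le r\le s\vee t\}}|t-s|^{p/2-2-\alpha p}\,ds\,dt$ (finite exactly when $p/2-2-\alpha p>-2$, i.e.\ $\alpha<\tfrac12$) is the standard argument for this classical lemma. The paper itself states the result in the Appendix without proof, as one of the ``facts scattered in the literature'' (it is essentially Lemma~2.1 of Flandoli--G\c{a}tarek), so there is no in-paper proof to compare against; your write-up supplies precisely the canonical one, and correctly identifies that the restriction to the corner region $\{s\le r\le t\}$ after Fubini is what makes the kernel integrable even when $p/2-2-\alpha p\le -1$.
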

%------------------------------
\begin{lemma}\label{lem: appendix2}\cite[Corollary 19]{Simon1990}
Suppose $s\geq r$, $p\leq q$ and $s-1/p\geq r-1/q$ 
($0<r\leq s<1$, $1\leq p\leq q\leq \infty$). Let $E$ be a 
Banach space and $I$ be an interval of $\R$. Then
\[
W^{s,p}(I;E)\hookrightarrow W^{r,q}(I;E).
\]
\end{lemma}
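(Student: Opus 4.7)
The plan is to prove this vector-valued fractional Sobolev embedding via the Gagliardo--Slobodeckii seminorm characterization. For $0<s<1$ and $p<\infty$, the $W^{s,p}(I;E)$ norm is equivalent to
\[
\left(\|u\|_{L^p(I;E)}^p + \iint_{I\times I}\frac{\|u(t)-u(\tau)\|_E^p}{|t-\tau|^{1+sp}}\,dt\,d\tau\right)^{1/p},
\]
and the goal is to show this quantity dominates the corresponding $W^{r,q}(I;E)$ norm under the stated scaling condition. Since Fourier-analytic tools like Littlewood--Paley or Bessel potential representations are unavailable for a general Banach space $E$, the argument must proceed using only real-variable techniques on difference quotients.

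First I would treat the boundary case $r=0$, i.e.\ the Sobolev embedding $W^{s,p}(I;E)\hookrightarrow L^q(I;E)$ under $s-1/p\geq -1/q$. The natural route is to pass through the Nikolskii space $N^{s,p}(I;E)$ with seminorm $\sup_{h>0}h^{-s}\|u(\cdot+h)-u(\cdot)\|_{L^p(I;E)}$: the chain $W^{s,p}\hookrightarrow N^{s,p}$ follows directly from the Gagliardo integral by monotonicity in $h$, and then $N^{s,p}\hookrightarrow L^q$ follows from a dyadic telescoping argument applied to $u(t)=\sum_k (u_{h_k}(t)-u_{h_{k+1}}(t))+u_{h_0}(t)$ with mollifications $u_{h_k}$ at scales $h_k=2^{-k}$, combined with Young's/Hölder's inequality. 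The endpoint $q=\infty$ (requiring $sp>1$) gives a Hölder modulus of continuity via the same telescoping.

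For the general case $r>0$, I would estimate the $W^{r,q}(I;E)$ Gagliardo seminorm of $u$ by splitting the double integral over $I\times I$ into dyadic annuli $A_k=\{(t,\tau):2^{-k-1}<|t-\tau|\leq 2^{-k}\}$ and applying Hölder's inequality in each annulus, trading integrability against smoothness. The condition $s-1/p\geq r-1/q$ provides exactly the budget needed so that the geometric series over $k$ converges; equivalently, after reducing to Nikolskii spaces one uses $N^{s,p}\hookrightarrow N^{r,q}$ for $s-1/p\geq r-1/q$, followed by $N^{r,q}\hookrightarrow W^{r',q}$ for $r'<r$ (which requires a strict inequality in the smoothness index but can be absorbed since the hypothesis of the lemma allows equality in $s-1/p=r-1/q$ only with $s>r$ being enough).

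The main obstacle is the case of equality $s-1/p=r-1/q$, where naive Hölder estimates on dyadic annuli diverge logarithmically and one must either use a finer Marcinkiewicz-type interpolation or exploit a refined characterization of $W^{s,p}$ via Lorentz-type conditions on differences. Simon handles this by carefully comparing $W$, $N$, and Besov-type spaces adapted to the vector-valued interval setting; since the statement is quoted verbatim as Corollary 19 of \cite{Simon1990}, I would cite that reference for the technical details of the critical case.
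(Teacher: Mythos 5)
The paper offers no proof of this lemma at all: it is stated as a verbatim quotation of Corollary 19 of Simon's paper and used as a black box, so there is no in-paper argument to compare yours against. Your sketch is consistent with what Simon actually does --- comparison of the Sobolev--Slobodeckii, Nikolskii and Besov scales via difference quotients and dyadic decomposition --- and you correctly isolate the critical case $s-1/p=r-1/q$ as the delicate point. One caveat: your parenthetical claim that the loss of smoothness in the chain $N^{s,p}\hookrightarrow N^{r,q}\hookrightarrow W^{r',q}$ with $r'<r$ ``can be absorbed'' because the hypothesis forces $s>r$ is not correct as stated. The slack needed to close that chain is in the quantity $s-1/p-(r-1/q)$, not in $s-r$; when equality holds, the Nikolskii route genuinely cannot reach $r'=r$ no matter how large $s-r$ is. The standard repair is the Besov chain $W^{s,p}=B^{s}_{p,p}\subset B^{r}_{q,p}\subset B^{r}_{q,q}=W^{r,q}$, where the first inclusion uses the scaling condition at fixed third index and the second uses the monotonicity $\ell^{p}\subset\ell^{q}$ for $p\le q$. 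Since this is precisely what Simon does and you explicitly defer the critical case to that reference, your proposal is no less complete than the paper's own treatment, which consists solely of the citation.
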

\noindent
%---------------------------------
Let us recall the Maslowski-Seidler theorem \cite{Maslowski1999} about the existence of an invariant measure.
\begin{theorem}\label{theo: theorem11.7}
Assume that 
\begin{enumerate}
 \item the semigroup $\{P_t\}_{t\geq 0}$ is sequentially weakly Feller  in $\mH^1$;
 \item 
%  some sequence $T_n\goto +\infty$ as $n\goto\infty$, the sequence 
%  \[
%   \bigg\{\frac{1}{T_n}\int_0^{T_n}\cL(\vecu(t;\vecu_0))\dt\bigg\}
%  \]
% is  tight on $\mH^1$.
  there exists $T_0\geq 0$ such that for any $\epsilon>0$ 
  there exists $R>0$ satisfying
\[
\sup_{T> T_0}\frac{1}{T}\int_0^{T}\mP(\{\|\vecu(s;\vecu_0)\|_{\mH^1}>R\})\ds \leq \epsilon,
\]
\end{enumerate}
Then there exists at least one invariant measure for equation~\eqref{eq: sLLB2}.
\end{theorem}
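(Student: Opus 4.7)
The plan is to carry out the Krylov--Bogolyubov procedure, but adapted to the non-metrizable sequentially weak topology on $\mH^1$, which is exactly the setting in which hypotheses (1) and (2) are phrased. First, for a fixed initial datum $\vecu_0\in\mH^1$ I would define the time-averaged measures
\[\mu_T(A) := \frac{1}{T}\int_0^T \mP\bigl(\vecu(s;\vecu_0)\in A\bigr)\,ds,\qquad T>T_0,\ A\in\cB(\mH^1).\]
Hypothesis (2) says that for every $\epsilon>0$ there is $R>0$ with $\mu_T(\{\|\cdot\|_{\mH^1}>R\})\le\epsilon$ uniformly in $T>T_0$. Because $\mH^1$ is a separable Hilbert space, the ball $\mB^1(R)$ is metrizable and compact in the weak topology (by the Banach--Alaoglu theorem applied to the reflexive space $\mH^1$), so the above display is exactly the statement that $\{\mu_T\}_{T>T_0}$ is tight on $(\mH^1,\mathrm{weak})$.

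Next I would extract a convergent subsequence. By a diagonal argument on the compact metric sets $\mB^1_w(R)$, together with the uniform tightness, I obtain $T_n\uparrow\infty$ and a Borel probability measure $\mu$ on $\mH^1$ such that
\[\lim_{n\to\infty}\int_{\mH^1}\phi\,d\mu_{T_n} \;=\; \int_{\mH^1}\phi\,d\mu\]
for every bounded sequentially weakly continuous $\phi:\mH^1\to\R$. The key Fubini computation, valid by the semigroup identity $P_s P_t = P_{s+t}$, is
\[\int P_t\phi\,d\mu_{T_n} - \int \phi\,d\mu_{T_n} \;=\; \frac{1}{T_n}\left(\int_{T_n}^{T_n+t}(P_s\phi)(\vecu_0)\,ds \;-\; \int_0^t (P_s\phi)(\vecu_0)\,ds\right),\]
whose absolute value is at most $2t\|\phi\|_\infty/T_n\to 0$. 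Since, by hypothesis (1), the function $P_t\phi$ is again bounded and sequentially weakly continuous whenever $\phi$ is, both sides of the displayed equation pass to the limit along $T_n$, yielding
\[\int P_t\phi\,d\mu \;=\; \int \phi\,d\mu\qquad\text{for every bounded sequentially weakly continuous }\phi.\]

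Finally, I would promote this to full invariance $P_t^*\mu=\mu$. Since $\mu$ is a Radon probability measure on $\mH^1$ (tight on balls by construction), a monotone-class/approximation argument shows that the class of bounded sequentially weakly continuous functions is measure-determining on $\cB(\mH^1)$: weak-closed half-spaces $\{\inpro{\vecu}{\vecv}_{\mH^1}\le c\}$ are sequentially weakly closed and their indicators can be approximated pointwise boundedly by sequentially weakly continuous cutoffs, and they generate the Borel $\sigma$-algebra of $\mH^1$. This extends $\int P_t\phi\,d\mu=\int\phi\,d\mu$ to all bounded Borel $\phi$, i.e.\ $\mu$ is invariant.

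The main obstacle is the measure-theoretic step of upgrading the tightness and subsequential limit from the compact metric sets $\mB^1_w(R)$ to a genuine Borel probability measure on $\mH^1$ with respect to which invariance can be tested against only sequentially weakly continuous functions--the space $\mH^1_w$ is not Polish, so classical Prokhorov does not apply directly. The workaround, which is the technical heart of the Maslowski--Seidler argument, uses the fact that $\mH^1$ is separable and reflexive, so bounded sets are weakly metrizable compact and one can construct $\mu$ via a diagonal procedure over an increasing family $\mB^1_w(R_k)$ exhausting the mass up to $\epsilon_k\downarrow 0$.
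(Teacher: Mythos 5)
The paper does not actually prove this statement: Theorem~\ref{theo: theorem11.7} is recalled verbatim in the Appendix as the Maslowski--Seidler theorem and is used as a black box, with the reader referred to \cite{Maslowski1999}. So there is no in-paper proof to compare against; what you have written is a reconstruction of the argument behind the cited result, and it is essentially the correct one. Your route is the lower-bound/Krylov--Bogolyubov scheme adapted to the bounded-weak topology: hypothesis (2) is read as tightness of the Ces\`aro averages $\mu_T$ with respect to the weakly compact, weakly metrizable balls $\mB^1(R)$; a diagonal extraction over an exhausting family of such balls produces a Borel probability measure $\mu$ on $\mH^1$ (using that the weak and norm Borel $\sigma$-algebras coincide on a separable Hilbert space); the Fubini/semigroup identity shows $\int P_t\phi\,d\mu=\int\phi\,d\mu$ for bounded sequentially weakly continuous $\phi$, where hypothesis (1) is used exactly to keep $P_t\phi$ in the admissible class of test functions; and a monotone-class argument over cylindrical half-spaces upgrades this to $P_t^*\mu=\mu$. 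All of these steps are sound. The only places where the write-up defers nontrivial work are (i) the consistency of the subsequential limits across the nested balls $\mB^1(R_k)$ when assembling $\mu$ (restriction to a closed ball is not weak-$*$ continuous, so one must argue as Maslowski--Seidler do, e.g.\ via the bounded-weak topology or by controlling boundary mass with the uniform tightness), and (ii) the passage from convergence of $\int\phi\,d\mu_{T_n}$ on each ball to convergence for arbitrary bounded sequentially weakly continuous $\phi$, which again uses the uniform $\epsilon_k$ tail bound. You correctly identify these as the technical heart; neither is a gap in the sense of a wrong idea, and both are carried out in the cited reference.
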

\noindent
%--------------------------------
Let us recall the Jakubowski's version of the Skorokhod Theorem \cite{jakubowski}
\begin{theorem}\label{theo:Jakubowski}
Let $(\cX,\tau)$ be  a topological space such that there exists a sequence $\{f_m\}$ of 
continuous functions $f_m:\cX\goto\R$ that separates points of $\cX$. 
Let $\{X_n\}$ be a sequence of $\cX$-valued Borel random variables defined on $(\Omega^n,\cF^n,\mP^n)$. Suppose that for evey $\epsilon>0$ 
there exists a compact subset $K_\epsilon\subset\cX$ such that 
\[
 \sup_{n\in\N}\mP^n(\{X_n\in K_{\epsilon}\})> 1-\epsilon.
\]
Then there exist a subsequence $\{n_k\}_{k\in\N}$, a sequence $\{Y_k\}_{k\in\N}$ of $\cX$--valued Borel random variables 
and an $\cX$--valued Borel random variable $Y$ defined on a certain probability space $(\Omega,\cF,\mP)$ such that
\[
 \cL(X_{n_k}) = \cL(Y_k),\quad k=1,2,\cdots 
\]
and
\[
 Y_k\goto^{\tau}Y\quad \text{as } k\goto\infty,\quad \mP-a.s.
\]

\end{theorem}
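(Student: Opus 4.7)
\textbf{Proof proposal for Theorem~\ref{theo:Jakubowski}.} The strategy is to reduce to the classical Skorokhod representation theorem on the Polish space $\R^\infty$ (with product topology) by means of the separating sequence, and then transfer almost sure convergence back to $(\cX,\tau)$ using the tightness hypothesis. I would begin by defining the evaluation map
\[
F:\cX\longrightarrow\R^\infty,\qquad F(x)=(f_m(x))_{m\ge 1}.
\]
By hypothesis each $f_m$ is continuous and $\{f_m\}$ separates points, so $F$ is continuous and injective. For any compact set $K\subset\cX$, the restriction $F|_K\colon K\to F(K)$ is a continuous bijection from a compact space onto a Hausdorff space, hence a homeomorphism; in particular $F(K)$ is compact in $\R^\infty$ and $F^{-1}\colon F(K)\to K$ is continuous.

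Next I would push the laws forward. Set $\mu_n=\mathcal L(F(X_n))$, a Borel probability measure on the Polish space $\R^\infty$. The tightness assumption on $\{X_n\}$ combined with compactness of $F(K_\varepsilon)$ shows that $\{\mu_n\}$ is tight on $\R^\infty$. By the classical Skorokhod representation theorem (applicable because $\R^\infty$ is Polish), there exist a subsequence $\{n_k\}$, a probability space $(\Omega,\cF,\mP)$, and $\R^\infty$-valued Borel random variables $\tilde Y_k,\tilde Y$ with $\mathcal L(\tilde Y_k)=\mu_{n_k}$ and $\tilde Y_k\to\tilde Y$ a.s.\ in $\R^\infty$. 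Because $\mu_{n_k}$ is concentrated on $F(\cX)$, one has $\tilde Y_k\in F(\cX)$ a.s.; and applying the portmanteau theorem to each closed set $F(K_{1/j})$ we obtain $\mP(\tilde Y\in F(K_{1/j}))\ge 1-1/j$, so $\tilde Y\in F(\bigcup_j K_{1/j})\subset F(\cX)$ almost surely. Defining $Y_k=F^{-1}(\tilde Y_k)$ and $Y=F^{-1}(\tilde Y)$ (and extending arbitrarily on the null set) then yields $\cX$-valued Borel random variables with $\mathcal L(Y_k)=\mathcal L(X_{n_k})$.

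The remaining task, and the main obstacle, is to upgrade a.s.\ convergence of $\tilde Y_k$ in the weaker topology of $\R^\infty$ to a.s.\ $\tau$-convergence of $Y_k=F^{-1}(\tilde Y_k)$ in $\cX$. For this I would exploit that $F$ is a homeomorphism on each compact $K_\varepsilon$. Fix $j$ and set
\[
A_j:=\bigl\{\omega:\tilde Y_k(\omega)\in F(K_{1/j})\text{ for all }k,\ \tilde Y(\omega)\in F(K_{1/j})\bigr\}.
\]
A standard Borel--Cantelli / diagonal argument, using $\sup_k\mP(\tilde Y_k\notin F(K_{1/j}))\le 1/j$, lets one find sets of probability arbitrarily close to one on which all $\tilde Y_k$ and $\tilde Y$ lie in a common compact $F(K_{1/j'})$ for some $j'=j'(\omega)$. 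On such a set the homeomorphism property of $F|_{K_{1/j'}}$ transports $\tilde Y_k(\omega)\to\tilde Y(\omega)$ in $\R^\infty$ to $Y_k(\omega)\to Y(\omega)$ in $\tau$. Taking a union over $j$ of these sets of full measure gives $Y_k\to Y$ $\tau$-a.s., completing the proof. The delicate point here is carefully handling the measurability of $F^{-1}$ (which follows because $F$ is a Borel isomorphism onto its image, by Kuratowski-type arguments on the $\sigma$-compact set $\bigcup_\varepsilon K_\varepsilon$) and the uniform-in-$k$ tightness needed to make the Borel--Cantelli step work.
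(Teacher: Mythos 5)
The paper does not actually prove Theorem~\ref{theo:Jakubowski}: it is quoted in the Appendix from \cite{jakubowski} as a known result, so there is no internal proof to compare against and your attempt must be judged on its own merits. The first part of your reduction is sound and is indeed the standard opening move (also Jakubowski's): $F=(f_m)_{m\ge1}$ is a continuous injection restricting to a homeomorphism of each compact $K\subset\cX$ onto the compact set $F(K)\subset\R^\infty$; the pushforward laws are tight on the Polish space $\R^\infty$ (note that the statement's $\sup_{n}$ is evidently a typo for $\inf_n$, and you correctly work with the uniform version); Prokhorov plus the classical Skorokhod theorem give $\tilde Y_k\to\tilde Y$ a.s.; and the portmanteau and Kuratowski-type arguments you invoke do legitimately place $\tilde Y$ and the $\tilde Y_k$ a.s.\ in the $\sigma$-compact set $F\bigl(\bigcup_j K_{1/j}\bigr)$, make $F^{-1}$ Borel there, and yield $\cL(Y_k)=\cL(X_{n_k})$.

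The gap is the final transfer step, and it is not a technicality: it is the entire difficulty of the theorem. To pass from $\tilde Y_k(\omega)\to\tilde Y(\omega)$ in $\R^\infty$ to $Y_k(\omega)\to Y(\omega)$ in $\tau$ you need, as you correctly identify, the whole tail of the sequence $Y_k(\omega)$ to lie in a single compact $K_j$, because the $f_m$ separate points but do not generate $\tau$. The only quantitative input available is $\sup_k\mP(\tilde Y_k\notin F(K_{1/j}))\le 1/j$, and this gives no bound at all on $\mP\bigl(\exists k:\ \tilde Y_k\notin F(K_{1/j})\bigr)$: the union bound diverges, and Borel--Cantelli requires summability over $k$ for a \emph{fixed} compact, which you do not have. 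Choosing $j=j(k)$ to force summability destroys the common compact, since $\overline{\bigcup_k K_{1/j(k)}}$ need not be compact and $F^{-1}$ need not be continuous on it. The classical Skorokhod representation on $\R^\infty$ gives no control on the event that the entire representing sequence stays inside a prescribed compact set, so the step cannot be completed as written. This is precisely why Jakubowski does not reduce to the Polish-space Skorokhod theorem: he first extracts, by a diagonal argument, a subsequence along which the restricted pushforwards $F_*\bigl(\cL(X_n)\big|_{K_j}\bigr)$ converge for every $j$, and then constructs $Y_k$ and $Y$ explicitly as functions of a single uniform random variable on $[0,1]$, partitioned according to the layers $K_j\setminus K_{j-1}$, so that by construction the representing points eventually lie in the same compact as the limit for a.e.\ $\omega$. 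That construction, or an equivalent device, is the missing ingredient in your argument.
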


%%%%%%%%%%%%%%%%%%%%%%%%%%%%%%%%%
\bibliographystyle{myabbrv}
\bibliography{mybib}

\def\cprime{$'$}
\begin{thebibliography}{10}

\bibitem{Brezis1983}
H.~Brezis.
\newblock {\em Analyse {F}onctionnelle}.
\newblock Masson, 1983.

\bibitem{Brown1979}
W.~Brown.
\newblock Thermal fluctuation of fine ferromagnetic particles.
\newblock {\em IEEE Transactions on Magnetics},  {\bf 15} (1979), 1196--1208.

\bibitem{Brown63}
W.~F. Brown.
\newblock Thermal fluctuations of a single-domain particle.
\newblock {\em Phys. Rev.},  {\bf 130} (1963), 1677--1686.

\bibitem{BrzGolJer12}
Z.~Brze{\'z}niak, B.~Goldys, and T.~Jegaraj.
\newblock Weak solutions of a stochastic {L}andau--{L}ifshitz--{G}ilbert
  equation.
\newblock {\em Applied Mathematics Research eXpress},  {\bf } (2012), 1--33.

\bibitem{ZdzisLiang2014}
Z.~Brze{\'z}niak and L.~Li.
\newblock Weak solutions of the stochastic {L}andau--{L}ifshitz--{G}ilbert
  equation with non--zero anisotrophy energy.
\newblock {\em Applied Mathematics Research eXpress},  {\bf } (2016).

\bibitem{motyl}
Z.~Brze\'zniak, E.~Motyl, and M.~Ondrejat.
\newblock Invariant measure for the stochastic {N}avier--{S}tokes equations in
  unbounded 2d domains.
\newblock {\em Ann. Probab.},  {\bf 45} (2017), 3145--3201.

\bibitem{Evans1998}
L.~C. Evans.
\newblock {\em Partial {D}ifferential {E}quations}.
\newblock American Mathematical Society, Berlin, 2 edition, 1998.

\bibitem{Evans12}
R.~F.~L. Evans, D.~Hinzke, U.~Atxitia, U.~Nowak, R.~W. Chantrell, and
  O.~Chubykalo-Fesenko.
\newblock Stochastic form of the {L}andau-{L}ifshitz-{B}loch equation.
\newblock {\em Phys. Rev. B},  {\bf 85} (2012), 014433.

\bibitem{Garanin1991}
D.~Garanin.
\newblock Generalized equation of motion for a ferromagnet.
\newblock {\em Physica A: Statistical Mechanics and its Applications},  {\bf
  172} (1991), 470 -- 491.

\bibitem{Garanin97}
D.~A. Garanin.
\newblock {F}okker-{P}lanck and {L}andau-{L}ifshitz-{B}loch equations for
  classical ferromagnets.
\newblock {\em Phys. Rev. B},  {\bf 55} (1997), 3050--3057.

\bibitem{Garanin2004}
D.~A. Garanin and O.~Chubykalo-Fesenko.
\newblock Thermal fluctuations and longitudinal relaxation of single-domain
  magnetic particles at elevated temperatures.
\newblock {\em Phys. Rev. B},  {\bf 70} (2004), 212409.

\bibitem{Gil55}
T.~Gilbert.
\newblock A {L}agrangian formulation of the gyromagnetic equation of the
  magnetic field.
\newblock {\em Phys Rev},  {\bf 100} (1955), 1243--1255.

\bibitem{jakubowski}
A.~Jakubowski.
\newblock Short communication:the almost sure skorokhod representation for
  subsequences in nonmetric spaces.
\newblock {\em Theory of Probability \& Its Applications},  {\bf 42} (1998),
  167--174.

\bibitem{Jiang2019}
S.~Jiang, Q.~Ju, and H.~Wang.
\newblock Martingale weak solutions of the stochastic
  {L}andau--{L}ifshitz--{B}loch equation.
\newblock {\em Journal of Differential Equations},  {\bf 266} (2019),
  2542--2574.

\bibitem{Chantrelletal2015}
G.~Ju, Y.~Peng, E.~K.~C. Chang, Y.~Ding, A.~Q. Wu, X.~Zhu, Y.~Kubota, T.~J.
  Klemmer, H.~Amini, L.~Gao, Z.~Fan, T.~Rausch, P.~Subedi, M.~Ma,
  S.~Kalarickal, C.~J. Rea, D.~V. Dimitrov, P.~W. Huang, K.~Wang, X.~Chen,
  C.~Peng, W.~Chen, J.~W. Dykes, M.~A. Seigler, E.~C. Gage, R.~Chantrell, and
  J.~U. Thiele.
\newblock High density heat-assisted magnetic recording media and advanced
  characterization --progress and challenges.
\newblock {\em IEEE Transactions on Magnetics},  {\bf 51} (2015), 1--9.

\bibitem{LL35}
L.~Landau and E.~Lifschitz.
\newblock On the theory of the dispersion of magnetic permeability in
  ferromagnetic bodies.
\newblock {\em Phys Z Sowjetunion},  {\bf 8} (1935), 153--168.

\bibitem{Le2016}
K.~N. Le.
\newblock Weak solutions of the {L}andau--{L}ifshitz--{B}loch equation.
\newblock {\em Journal of Differential Equations},  {\bf 261} (2016), 6699 --
  6717.

\bibitem{Maslowski1999}
B.~Maslowski and J.~Seidler.
\newblock On sequentially weakly {F}eller solutions to {SPDE}s.
\newblock {\em Atti della Accademia Nazionale dei Lincei. Classe di Scienze
  Fisiche, Matematiche e Naturali. Rendiconti Lincei. Matematica e
  Applicazioni},  {\bf 10} (1999), 69--78.

\bibitem{Ondrejat2004}
M.~Ondreját.
\newblock {\em Uniqueness for stochastic evolution equations in Banach spaces}.
\newblock 2004.

\bibitem{pardoux}
E.~Pardoux.
\newblock Stochastic partial differential equations and filtering of diffusion
  processes.
\newblock {\em Stochastics},  {\bf 3} (1979), 127--167.

\bibitem{Prejbeanu2007}
I.~L. Prejbeanu, M.~Kerekes, R.~C. Sousa, H.~Sibuet, O.~Redon, B.~Dieny, and
  J.~P. Nozières.
\newblock Thermally assisted mram.
\newblock {\em Journal of Physics: Condensed Matter},  {\bf 19} (2007), 165218.

\bibitem{Simon1990}
J.~Simon.
\newblock Sobolev, {B}esov and {N}ikolskii fractional spaces: Imbeddings and
  comparisons for vector valued spaces on an interval.
\newblock {\em Annali di Matematica Pura ed Applicata},  {\bf 157} (1990),
  117--148.

\bibitem{Strauss1966}
W.~A. Strauss.
\newblock On continuity of functions with values in various banach spaces.
\newblock {\em Pacific Journal of Mathematics},  {\bf 19} (1966), 543--551.

\end{thebibliography}
\end{document}